\documentclass[10pt]{article}
\textwidth= 5.00in
\textheight= 7.4in
\topmargin = 30pt
\evensidemargin=0pt
\oddsidemargin=55pt
\headsep=17pt
\parskip=.5pt
\parindent=12pt
\usepackage[OT1]{fontenc}  	
\usepackage[utf8]{inputenc}
\usepackage[spanish,french,english,USenglish]{babel} 	
\usepackage{OldStandard}
\usepackage{amssymb,amsmath,amsthm,units,esint,bbm}	 				
\usepackage{color,graphicx} 			
\usepackage[small]{caption}
\usepackage{cases}

\makeatletter
\renewcommand\section{\@startsection {section}{1}{\z@}
{-30pt \@plus -1ex \@minus -.2ex}
{2.3ex \@plus.2ex}
{\normalfont\normalsize\bfseries\boldmath}}

\renewcommand\subsection{\@startsection{subsection}{2}{\z@}
{-3.25ex\@plus -1ex \@minus -.2ex}
{1.5ex \@plus .2ex}
{\normalfont\normalsize\bfseries\boldmath}}

\renewcommand{\@seccntformat}[1]{\csname the#1\endcsname. }
\makeatother

\newtheorem{theorem}{Theorem}

\newtheorem{corollary}{Corollary}

\DeclareMathOperator{\sgn}{sgn} 

\renewcommand{\Re}{\mathop{\rm Re}\nolimits}
\newcommand{\res}{\mathop{\rm res}\limits}

\newcommand{\be}{\begin{equation}}
\newcommand{\ee}{\end{equation}} 
\setlength{\arraycolsep}{1.5pt}

\makeatletter
\newcommand{\specialnumber}[1]{
\def\tagform@##1{\maketag@@@{(\ignorespaces##1\unskip\@@italiccorr#1)}}}
\makeatother

\makeatletter
\def\blfootnote{\xdef\@thefnmark{}\@footnotetext}
\def\bkfootnote{\xdef\@thefnmark{*}\@footnotetext}
\makeatother

\begin{document}

\begin{center}

{\bf \vspace{-6em}Three Notes on Ser's and Hasse's Representations \\[5pt] for the Zeta-Functions}
\vskip 20pt
{\bf Iaroslav V.~Blagouchine\footnote{Fellow of the Steklov Institute of Mathematics at St.~Petersburg,
Russia.\\[-0.3em]}}\\
{\small \it SeaTech, University of Toulon, France.}\\
{\small \it St.~Petersburg State University of Architecture and Civil Engineering, Russia.}\\
{\small \tt iaroslav.blagouchine@univ-tln.fr, iaroslav.blagouchine@pdmi.ras.ru}
\end{center}
\vskip 15pt

\blfootnote{\tt \underline{Note to the readers of the 5th arXiv version:} this version is a
copy of the journal version of the article, which has been published in INTEGERS,
Electronic Journal of Combinatorial Number Theory, 
vol.~18A (special volume in Honor of Jeffrey Shallit on the occasion of his 60th Birthday), article~\#A3, pp.~1-45, 2018.
Submission history: received 5 May 2017, accepted: 2 February 2018, published: 16 March 2018
(the first version of this article was published on arXiv on 7 June 2016). 
For any further reference to the material published here, 
please always use the journal version of the paper.}

\centerline{\bf Abstract}
\noindent
This paper is devoted to Ser's and Hasse's series representations for the zeta-functions, as well as to several closely related 
results. The notes concerning Ser's and Hasse's representations are given as theorems, while the related 
expansions are given either as separate theorems or as formul\ae~inside the remarks and corollaries. 
In the first theorem, we show that the famous Hasse's series for the
zeta-function, obtained in 1930 and named after the German mathematician Helmut Hasse, is 
equivalent to an earlier expression given by a little-known French mathematician Joseph Ser in 1926.
In the second theorem, we derive a similar series representation for the zeta-function 
involving the Cauchy numbers of the second kind (N{\o}rlund numbers). 
In the third theorem, with the aid of some special polynomials,
we generalize the previous results to the Hurwitz zeta-function.
In the fourth theorem, we obtain a similar series with Gregory's coefficients of higher order, 
which may also be regarded as a functional equation for the zeta-functions. 
In the fifth theorem, we extend the results of the third theorem to a class of Dirichlet series. As a consequence, 
we obtain several globally convergent series for the zeta-functions.
They are complementary to Hasse's series, contain the same finite differences and also generalize Ser's results.
In the paper, we also show that Hasse's series may be obtained much more easily by using 
the theory of finite differences, and we demonstrate that there exist numerous series 
of the same nature. 
In the sixth theorem, we show that Hasse's series is a simple particular case of a more general class of series involving
the Stirling numbers of the first kind.
All the expansions derived in the paper lead, in turn, to the series expansions for the Stieltjes constants,
including new series with rational terms only for Euler's constant, for the Maclaurin
coefficients of the regularized Hurwitz zeta-function, for the logarithm of the gamma-function,
for the digamma and trigamma functions. 
Throughout the paper, we also mention several ``unpublished''
contributions of Charles Hermite, which were very close to the results of Hasse and Ser. 
Finally, in the Appendix, we prove an interesting integral representation
for the Bernoulli polynomials of the second kind,
formerly known as the Fontana--Bessel polynomials.
\pagestyle{myheadings}
\thispagestyle{empty}
\baselineskip=12.875pt

\vspace{-1em}

\section{Introduction}
The Euler--Riemann zeta-function
\be\notag
\,\zeta(s) \equiv\sum\limits_{n=1}^\infty n^{-s}
\,= \prod\limits_{n=1}^\infty \!\big(1-p_n^{-s} \big)^{-1} 
\,\,,\qquad
\begin{array}{l}
\Re s>1\, \\[1mm]
p_n\in\mathbbm{P}\,,
\end{array}
\ee
and its most common generalization, the Hurwitz zeta-function 
\be\notag
\,\zeta(s,v) \equiv\sum\limits_{n=0}^\infty (n+v)^{-s}\,,\qquad
\begin{array}{l}
\Re s>1\, \\[1mm]
v\in\mathbbm{C}\setminus\!\{0,-1,-2,\ldots\}\,,
\end{array}
\ee
$\zeta(s)=\zeta(s,1)$, 
are some of the most important special functions in analysis and number theory.
They were studied by many famous mathematicians, including Stirling, Euler, Malmsten,
Clausen, Kinkelin, Riemann, Hurwitz, Lerch, Landau, and continue to receive considerable attention from modern researchers.
In 1930, the German mathematician Helmut Hasse published a paper \cite{hasse_01}, in which he obtained and studied these 
globally convergent series for the $\zeta$--functions
\begin{eqnarray}
&&\zeta(s)\,=\,\frac{1}{\,s-1\,}\sum_{n=0}^\infty\frac{1}{\,n+1\,}\sum
_{k=0}^n (-1)^k \binom{n}{k}(k+1)^{1-s} \,,\label{ug87g} \\[3mm]
&&\zeta(s,v)\,=\,\frac{1}{\,s-1\,}\sum_{n=0}^\infty\frac{1}{\,n+1\,}\sum
_{k=0}^n (-1)^k \binom{n}{k}(k+v)^{1-s} \,, \label{jnd2h93ndd}
\end{eqnarray}
containing finite differences $\,\Delta^n 1^{1-s}\,$
and $\,\Delta^n v^{1-s}\,$ respectively.\footnote{For the definition of the finite difference operator, 
see \eqref{0i3u40jfmnr}.}~Hasse also remarked\footnote{Strictly speaking, the remark
was communicated to him by Konrad Knopp.} that the first series is quite similar to 
the Euler transformation of the $\eta$-function series
$\,\eta(s)\equiv\sum\limits_{n=1}^\infty (-1)^{n+1} n^{-s}
=\big(1-2^{1-s}\big) \,\zeta(s)\,$, $\,\Re s>0\,$, i.e.
\vspace{-0.8em}
\be\label{wsdv34d}
\zeta(s)\,=\,\frac{1}{\,1-2^{1-s}\,}\sum_{n=0}^\infty\frac{1}{\,2^{n+1}\,}\sum
_{k=0}^n (-1)^k \binom{n}{k}(k+1)^{-s}  \,,
\ee
the expression which, by the way, was earlier
given by Mathias Lerch \cite{lerch_03}\footnote{Interestingly, Lerch did not notice that his result is simply
Euler's transformation of $\eta(s)$.} and later rediscovered by Jonathan Sondow \cite{sondow_03}, \cite{connon_06}.
Formul\ae~\eqref{ug87g}--\eqref{jnd2h93ndd} have become widely known, and in
the literature they are often referred to as \emph{Hasse's formul\ae} for the $\zeta$-functions.
At the same time, it is not so well known that 4 years earlier, a little-known French mathematician Joseph Ser 
published a paper \cite{ser_01} containing very similar results.\footnote{Moreover, as we come to show in Remark 2, 
series \eqref{jnd2h93ndd} is also a particular case of a more general formula obtained by N\o{}rlund in 1923
(although the formula may, of course, be much older).} In particular, he showed that
\begin{eqnarray}
\zeta(s)\!\! &&=\,\frac{1}{\,s-1\,}\sum_{n=0}^\infty\frac{1}{\,n+2\,}
\sum_{k=0}^{n} (-1)^k \binom{n}{k}(k+1)^{-s}, \label{hdf398hd}
\end{eqnarray}
and also gave this curious series
\begin{eqnarray}
\zeta(s)\!\! && =\,\frac{1}{\,s-1\,}
+\sum_{n=0}^\infty\big| G_{n+1}\big
|\sum_{k=0}^{n} (-1)^k \binom{n}{k}(k+1)^{-s} \,=\label{c289hdnwe3} \\[3mm]
&& =\,\frac{1}{\,s-1\,}+\frac{1}{2}
+ \frac{1}{12}\big(1-2^{-s}\big)+\frac{1}{24}\big(1-2\cdot 2^{-s}+3^{-s}\big) +\ldots \notag
\end{eqnarray}
\cite[Eq.~(4), p.~1076]{ser_01}\footnote{Our formula \eqref{c289hdnwe3} is a corrected version
of the original Ser's formula (4) \cite[p.~1076]{ser_01} (we also corrected this formula in our 
previous work \cite[p.~382]{iaroslav_09}). In Ser's formula (4), the corrections
which need to be done are the following. First, in the second line of (2)
the last term should be $(-1)^n (n+1)^{-s}$ and not $(-1)^n n^{-s}$. 
Second, in equation (3), ``(1-x((2-x)'' should read ``(1-x)(2-x)''.
Third, the region of convergence of formula (4), p.~1076, should be $s\in\mathbbm{C}\setminus\!\{1\}$
and not $s<1$.
Note also that Ser's $p_{n+1}$ are equal to our $|G_n|$. 
We,
actually, carefully \label{kj3re309ej}
examined 5 different hard copies of \cite{ser_01} (from the \emph{Institut Henri Poincaré}, from 
the \emph{École Normale Supérieure Paris}, from the \emph{Université Pierre-et-Marie-Curie}, 
from the \emph{Université de Strasbourg} and from the \emph{Bibliothèque nationale de France}), and all of them
contained the same misprints. }, \cite[p.~382]{iaroslav_09},
to which Charles Hermite was also very close already in 1900.\footnote{Charles Hermite even aimed to obtain 
a more general expression (see, for more details, Theorem \ref{ij298034hd} and footnote \ref{h92h3dbi23}).}
The numbers $G_n$ appearing in the latter expansion are known as \emph{Gregory's
coefficients} and may also be called by some authors \emph{(reciprocal) logarithmic numbers}, \emph{Bernoulli numbers of
the second kind}, normalized \emph{generalized Bernoulli numbers} $B_n^{(n-1)}$ and $B_n^{(n)}(1)$,\footnote{See \eqref{j023jd3ndu} hereafter
and also \cite[pp.~145--147, 462]{norlund_02}, \cite[pp.~127, 129, 135, 182]{milne_01}. Note also that $B_n^{(n-1)}=B_n^{(n-1)}(0)$ from
\eqref{j023jd3ndu}.} and normalized \emph{Cauchy numbers
of the first kind} $C_{1,n}$. They are rational and may be defined either via their generating function
\be\label{eq32}
\frac{z}{\ln(1+z)} \,=\, 1\,+\sum
_{n=1}^\infty G_n \, z^n ,\qquad|z|<1\,,
\ee
or explicitly via the recurrence relation 
\be\label{jfhek4s}
G_n\,=\,\frac{(-1)^{n-1}}{\,n+1\,}\,+ \sum_{k=1}^{n-1}\! \frac{(-1)^{n+1-k}G_k}{\,n+1-k\,}\,,
\qquad G_1=\frac{1}{2}\,,\quad n=2,3,4,\ldots
\ee
see e.g.~\cite[p.~10]{soldner_01}, \cite{bessel_01}, \cite[pp.~261--262, \S~61]{demorgan_01}, 
\cite[p.~143]{kluyver_02}, \cite[p.~423, Eq.~(30)]{kowalenko_01}, or via 
\begin{eqnarray}
\label{ldhd9ehn}
& G_n & =\,\frac{C_{1,n}}{n!}\,=-\frac{B_n^{(n-1)}}{\,(n-1)\,n!\,}=\frac{B^{(n)}_n(1)}{n!}\,
=  \int\limits_0^1\!\! \binom{x}{n} \,  dx\,\notag\\[-1pt]
&& = \, \frac{1}{n!}\! \int\limits_0^1\! (x-n+1)_n\,  dx\, = 
\, \frac{1}{n!}\!\sum\limits_{l=1}^{n} \frac{S_1(n,l)}{l+1} \, ,
\end{eqnarray}
where $n$ is a natural number, $\,(x)_n\equiv\,x\,(x+1)\,(x+2)\cdots(x+n-1)\,$ stands
for the Pochhammer symbol (also known as the rising factorial) and $S_1(n,l)$ are 
the Stirling numbers of the first kind, which are the coefficients in the expansion
of the falling factorial, and hence of the binomial coeffcient,\footnote{For more information about $S_1(n,l)$,
see \cite[Sect.~2]{iaroslav_08} and numerous references given therein.
Here we use exactly the same definition for $S_1(n,l)$ as in the cited reference.}
\be\label{09i4rjo2ende3}
(x-n+1)_n=\,
n!\binom{x}{n}=\,x\,(x-1)\,(x-2)\cdots(x-n+1)\,=\,\sum_{l=1}^n S_1(n,l)\, x^l .
\ee
Gregory's coefficients are alternating $\,G_n=(-1)^{n-1}|G_n|\,$ and decreasing in absolute value;
they behave as $\,\big(n\ln^2 n\big)^{-1}$ at $n\to\infty$ and may be bounded from below and from above accordingly
to formul\ae~(55)--(56) from \cite{iaroslav_08}.\footnote{For the asymptotics of $\,G_n\,$ and their history, 
see \cite[Sect.~3]{iaroslav_11}.} The first few coefficients are: 
$G_1=+\nicefrac{1}{2}\,$,
\mbox{$G_2=\nicefrac{-1}{12}\,$}, $G_3=\nicefrac{+1}{24}\,$, $G_4=\nicefrac{-19}{720}\,$,
$G_5=\nicefrac{+3}{160}\,$, $G_6=\nicefrac{-863}{60\,480}\,$,\ldots\footnote{Numerators and denominators of $G_n$
may also be found in OEIS A002206 and A002207 respectively.}
For more information about these important numbers, see \cite[pp.~410--415]{iaroslav_08}, \cite[p.~379]{iaroslav_09},
and the literature given therein (nearly 50 references).

\section{On the equivalence between Ser's and Hasse's representations for the Euler--Riemann zeta-function}
One may immediately see that Hasse's representation \eqref{ug87g} and Ser's  representation \eqref{hdf398hd} are very similar,
so one may question whether these expressions are equivalent or not.
The paper written by Ser \cite{ser_01} is much less cited than that by Hasse \cite{hasse_01}, and in the few works 
in which both of them are cited, these series are treated as different and with no connection between them.\footnote{A simple internet 
search with ``Google scholar'' indicates
that Hasse's paper \cite{hasse_01} is cited more than 50 times, while 
Ser's paper \cite{ser_01} is cited only 12 times (including several on-line resources such as \cite{weisstein_04}, as well as some incorrect items),
and all the citations are very recent. 
These citations mainly regard an infinite product for $e^{\gamma}$, e.g.~\cite{sondow_04}, 
\cite{bolbachan_01}, \cite{chen_02}, \cite{chen_03}, and we found only 
two works \cite{connon_06}, \cite{ahmed_02}, where both articles \cite{ser_01} and  \cite{hasse_01} 
were cited simultaneously and in the context of 
series representations for $\zeta(s)$.} However, as we shall show later, this is not true.

In one of our previous works \cite[p.~382]{iaroslav_09}, we already noticed that these two series are, in fact, equivalent, but this was stated in 
a footnote and without a proof.\footnote{We, however, indicated that a recurrence relation
for the binomial coefficients should be used for the proof.} 
Below, we provide a rigorous proof of this statement.
\begin{theorem}
Ser's representation for the $\zeta$-function \cite[p.~1076, Eq.~(7)]{ser_01}
\be\label{jhd38hdsa}
\zeta(s)\,=\,\frac{1}{\,s-1\,}\sum_{n=0}^\infty\frac{1}{\,n+2\,}
\sum_{k=0}^{n} (-1)^k \binom{n}{k}(k+1)^{-s}\,,\qquad s\in\mathbbm{C}\setminus\!\{1\}\,,
\ee
and Hasse's representation for the $\zeta$-function \cite[pp.~460--461]{hasse_01}
\be\label{jncw9eucn}
\zeta(s)\,=\,\frac{1}{\,s-1\,}\sum_{n=0}^\infty\frac{1}{\,n+1\,}\sum
_{k=0}^n (-1)^k \binom{n}{k}(k+1)^{1-s}\,,\qquad s\in\mathbbm{C}\setminus\!\{1\}\,,
\ee
are equivalent in the sense that
one series is a rearranged version of the other. 
\end{theorem}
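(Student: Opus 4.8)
The plan is to reduce both representations to a single family of finite differences and to connect them through a three-term recurrence. I set
\[
A_n(s)\,:=\,\sum_{k=0}^{n}(-1)^k\binom{n}{k}(k+1)^{-s},
\]
so that Ser's series \eqref{jhd38hdsa} equals $\tfrac{1}{s-1}\sum_{n\ge0}\tfrac{1}{n+2}A_n(s)$, while Hasse's series \eqref{jncw9eucn}, after writing $(k+1)^{1-s}=(k+1)^{-(s-1)}$, equals $\tfrac{1}{s-1}\sum_{n\ge0}\tfrac{1}{n+1}A_n(s-1)$. Thus the whole statement comes down to comparing $\sum_{n}\tfrac{A_n(s-1)}{n+1}$ with $\sum_n\tfrac{A_n(s)}{n+2}$.

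First I would record the elementary identity $(k+1)\binom{n}{k}=(n+1)\binom{n}{k}-n\binom{n-1}{k}$ (this is the ``recurrence for the binomial coefficients'' alluded to in \cite{iaroslav_09}). Multiplying it by $(-1)^k(k+1)^{-s}$ and summing over $k$ yields at once the three-term recurrence
\[
A_n(s-1)\,=\,(n+1)\,A_n(s)-n\,A_{n-1}(s),\qquad n\ge1,
\]
with $A_0(s-1)=A_0(s)=1$; the vanishing term $\binom{n-1}{n}=0$ is what collapses the second sum to $A_{n-1}(s)$.

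Next, dividing the recurrence by $n+1$ and summing against $\tfrac{1}{n+1}$, the factor $\tfrac{n}{n+1}=1-\tfrac{1}{n+1}$ produces a telescoping (Abel-type) structure. Carrying this out, I expect to obtain for every finite $N$ the purely algebraic identity
\[
\sum_{n=0}^{N}\frac{A_n(s-1)}{n+1}\,=\,A_N(s)+\sum_{n=0}^{N-1}\frac{A_n(s)}{n+2}.
\]
The right-hand sum here is precisely the $N$-th partial sum of Ser's inner series, so the entire comparison is reduced to controlling the single leftover term $A_N(s)$.

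The main obstacle, and the only genuinely analytic point, is to show that $A_N(s)\to0$ as $N\to\infty$ for every $s\neq1$. For $\Re s>0$ this is transparent from the integral representation $A_N(s)=\tfrac{1}{\Gamma(s)}\int_0^\infty t^{s-1}e^{-t}(1-e^{-t})^{N}\,dt$, obtained by inserting $(k+1)^{-s}=\tfrac{1}{\Gamma(s)}\int_0^\infty t^{s-1}e^{-(k+1)t}\,dt$ and summing the binomial expression to $(1-e^{-t})^{N}$; since $0<1-e^{-t}<1$ on $(0,\infty)$, dominated convergence gives the decay. For the remaining values of $s$ I would invoke analytic continuation, or the known $\big(N\ln^2 N\big)^{-1}$-type bounds on these finite differences recorded in \cite{iaroslav_08}. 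Letting $N\to\infty$ in the displayed identity then converts Hasse's sum into Ser's sum, proving the equivalence. I would also stress why this care is essential: the two double series are not absolutely convergent, and the naive attempt to interchange the order of summation and match coefficients of each $(k+1)^{-s}$ fails, because the resulting inner sums $\sum_{n\ge k}\binom{n}{k}/(n+1)$ diverge. The recurrence together with the vanishing of $A_N(s)$ is exactly the rigorous incarnation of the informal ``rearrangement'' asserted in the statement.
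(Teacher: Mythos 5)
Your argument is correct, and it reorganizes the paper's computation in a genuinely different and in fact more careful way. The combinatorial core is the same in both proofs: Pascal-type identities for $(k+1)\binom{n}{k}$ together with the partial-fraction splitting of $\frac{1}{(n+1)(n+2)}$ (your factor $1-\frac{n+1}{n+2}=\frac{1}{n+2}$ is exactly the paper's $\frac{1}{n+1}-\frac{1}{n+2}$). The difference is in the packaging. The paper starts from \eqref{jhd38hdsa}, shifts the binomial via $\frac{1}{k+1}\binom{n}{k}=\frac{1}{n+1}\binom{n+1}{k+1}$, and then splits the resulting double series into a \emph{difference of two infinite series} before reindexing and recombining with Pascal's rule; at that intermediate step the two pieces are individually divergent for some $s$ (at $s=2$ they are $\sum H_{n+1}/(n+1)$ and $\sum H_{n+1}/(n+2)$), so the paper's chain is really a formal rearrangement, consistent with the theorem's phrasing but not a self-contained convergence argument. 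Your version never leaves finite sums: the three-term recurrence $A_n(s-1)=(n+1)A_n(s)-nA_{n-1}(s)$ and the telescoped identity $\sum_{n=0}^{N}\frac{A_n(s-1)}{n+1}=A_N(s)+\sum_{n=0}^{N-1}\frac{A_n(s)}{n+2}$ (both of which check out) isolate the entire analytic content in the single boundary term $A_N(s)$, and you correctly flag that the naive coefficient-matching diverges. What this buys is a rigorous proof that either series converges iff the other does, with the same sum. The one point to tighten is the tail estimate: your integral representation settles $A_N(s)\to0$ only for $\Re s>0$, ``analytic continuation'' does not literally apply to a limit statement, and the $\big(N\ln^2N\big)^{-1}$ bound you cite is for Gregory's coefficients $G_N$, not for $\Delta^N 1^{-s}$. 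The fact you need is true for every $s$ and standard --- by the N{\o}rlund--Rice/Flajolet--Sedgewick analysis one has $A_N(s)\sim(\ln N)^{s-1}/(N\,\Gamma(s))$, and for $s$ a nonpositive integer $A_N(s)$ vanishes identically for large $N$ --- but it does need a correct citation or a short proof, since the assumed convergence of the two series only yields $A_N(s)=o(N)$, which is not enough to kill the boundary term.
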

\begin{proof}
In view of the fact that 
\be\notag
\frac{1}{\,k+1\,}\binom{n}{k}=\frac{1}{\,n+1\,}\binom{n+1}{k+1}\quad\text{and that}\quad
\frac{1}{\,(n+2)(n+1)\,}=\frac{1}{\,n+1\,} - \frac{1}{\,n+2\,} \, ,
\ee
Ser's formula \eqref{jhd38hdsa}, multiplied by the factor $s-1$, may be written as: 
\begin{eqnarray}
\left(s-1\right)\zeta(s)\!\!\!&&\,=\sum_{n=0}^\infty\frac{1}{\,n+2\,}
\sum_{k=0}^{n} (-1)^k \binom{n}{k}(k+1)^{-s}    \notag\\[1mm]
&&=\sum_{n=0}^\infty\frac{1}{\,(n+2)(n+1)\,}
\sum_{k=0}^{n} (-1)^k \binom{n+1}{k+1}(k+1)^{1-s} \notag\\[1mm]
&&=\sum_{n=0}^\infty\frac{1}{\,n+1\,}\sum_{k=0}^{n} (-1)^k \binom{n+1}{k+1}(k+1)^{1-s} -  \notag\\[1mm]
&&\qquad\qquad
-\sum_{n=0}^\infty\frac{1}{\,n+2\,}\sum_{k=0}^{n} (-1)^k \binom{n+1}{k+1}(k+1)^{1-s} = \notag\\[1mm] 
&&=1+\sum_{n=1}^\infty\frac{1}{\,n+1\,}\sum_{k=0}^{n} (-1)^k \binom{n+1}{k+1}(k+1)^{1-s} - \notag\\[1mm]
&&\qquad\qquad
-\sum_{n=1}^\infty\frac{1}{\,n+1\,}\sum_{k=0}^{n-1} (-1)^k \binom{n}{k+1}(k+1)^{1-s} =\notag\\[1mm]
&&=1+\sum_{n=1}^\infty\frac{(-1)^n}{\,(n+1)^s\,}
+\sum_{n=1}^\infty\frac{1}{\,n+1\,}\sum_{k=0}^{n-1} \frac{(-1)^k}{(k+1)^{s-1}}
\left\{ \!\binom{n+1}{k+1} - \binom{n}{k+1} \!\right\}\notag
\end{eqnarray}

\begin{eqnarray}
&&\,=\,1+\sum_{n=1}^\infty\frac{(-1)^n}{\,(n+1)^s\,}
+\,\sum_{n=1}^\infty\frac{1}{\,n+1\,}\sum_{k=0}^{n-1} \frac{(-1)^k}{(k+1)^{s-1}} \binom{n}{k}\notag\\[3mm] 
&&\,=\,1+
\sum_{n=1}^\infty\frac{1}{\,n+1\,}\sum_{k=0}^{n} \frac{(-1)^k}{(k+1)^{s-1}} \binom{n}{k}\,=
\sum_{n=0}^\infty\frac{1}{\,n+1\,}\sum_{k=0}^{n} \frac{(-1)^k}{(k+1)^{s-1}} \binom{n}{k}\,,\notag
\end{eqnarray}
where, between the seventh and eighth lines, we resorted to the recurrence relation for the binomial coefficients.
The last line is identical with Hasse's formula \eqref{jncw9eucn}. 
\end{proof}

Thus, series \eqref{jhd38hdsa} and \eqref{jncw9eucn} are equivalent in the sense that \eqref{jncw9eucn} 
may be obtained by an appropriate 
rearrangement of terms in \eqref{jhd38hdsa} and vice versa.
It is also interesting that Hasse's series for $\zeta(s)$ may be obtained from that for $\zeta(s,v)$ by 
simply setting $v=1$, while Ser's series for $\zeta(s)$, as we come to see later, 
is obtained from a much more complicated formula for $\zeta(s,v)$,
see \eqref{jhdf29h3r2}--\eqref{gh87tgkgft} hereafter.

\begin{corollary}\label{ihy923hbs}
The Stieltjes constants $\gamma_m$ may be given by the following series
\begin{eqnarray}
&&\displaystyle\gamma_m\,=\,-\frac{1}{\,m+1\,}\sum_{n=0}^\infty\frac{1}{\,n+2\,
}\sum_{k=0}^{n} (-1)^k \binom{n}{k}\frac{\ln^{m+1}(k+1)}{k+1}\,,
\qquad  m\in\mathbbm{N}_0\,, \label{oi0udhnnee}\\[3mm]
&&\displaystyle\gamma_m\,=\,-\frac{1}{\,m+1\,}\sum_{n=0}^\infty\frac{1}{\,n+1\,
}\sum_{k=0}^n (-1)^k \binom{n}{k}\ln^{m+1}(k+1)\,,
\qquad m\in\mathbbm{N}_0\,, \label{c34fc34tv}\\[3mm]
&&\displaystyle\gamma_m\,=\sum_{n=0}^\infty\big| G_{n+1}\big|\sum_{k=0}^{n}
(-1)^k \binom{n}{k}\frac{\ln^{m}(k+1)}{k+1}\,,
\qquad m\in\mathbbm{N}\,.\label{x23dcrcf}
\end{eqnarray}
\end{corollary}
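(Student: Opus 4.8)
The plan is to obtain all three formulae by reading off the Taylor coefficients of the corresponding representation of $\zeta(s)$ about the point $s=1$ and comparing them with the Laurent expansion
\[
\zeta(s)\,=\,\frac{1}{\,s-1\,}+\sum_{m=0}^\infty\frac{(-1)^m}{m!}\,\gamma_m\,(s-1)^m\,,
\]
which defines the Stieltjes constants $\gamma_m$. The common elementary ingredient is the expansion of the inner powers: since $(k+1)^{-s}=(k+1)^{-1}\exp\{-(s-1)\ln(k+1)\}$, one has $(k+1)^{-s}=(k+1)^{-1}\sum_{j=0}^\infty\frac{(-1)^j}{j!}\ln^j(k+1)\,(s-1)^j$, and likewise $(k+1)^{1-s}=\sum_{j=0}^\infty\frac{(-1)^j}{j!}\ln^j(k+1)\,(s-1)^j$.

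First I would establish \eqref{x23dcrcf}, which is the most transparent because in \eqref{c289hdnwe3} the singular term $\frac{1}{s-1}$ is separated off additively. Subtracting it, substituting the expansion of $(k+1)^{-s}$, and interchanging the order of summation gives
\[
\zeta(s)-\frac{1}{\,s-1\,}=\sum_{m=0}^\infty\frac{(-1)^m}{m!}(s-1)^m\sum_{n=0}^\infty\big|G_{n+1}\big|\sum_{k=0}^{n}(-1)^k\binom{n}{k}\frac{\ln^{m}(k+1)}{k+1}\,.
\]
Matching the coefficient of $(s-1)^m$ against the Laurent series yields \eqref{x23dcrcf} directly, with $\ln^m$ and no extra prefactor.

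Next, for \eqref{oi0udhnnee} and \eqref{c34fc34tv} the factor $\frac{1}{s-1}$ multiplies the entire series, so I would instead work with the entire function $(s-1)\zeta(s)=1+\sum_{m=0}^\infty\frac{(-1)^m}{m!}\gamma_m(s-1)^{m+1}$. Inserting the expansion of $(k+1)^{-s}$ into Ser's formula \eqref{jhd38hdsa} (respectively of $(k+1)^{1-s}$ into Hasse's formula \eqref{jncw9eucn}) and again interchanging summations produces a power series in $(s-1)$; comparing the coefficient of $(s-1)^{m+1}$ now forces both the factor $-\frac{1}{m+1}$ and the power $\ln^{m+1}(k+1)$ that appear in \eqref{oi0udhnnee}--\eqref{c34fc34tv}. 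As a check, the constant ($j=0$) term must equal $1$ in each case: for Hasse because $\sum_{k=0}^{n}(-1)^k\binom{n}{k}=0$ for $n\geq1$, and for Ser because $\sum_{k=0}^{n}(-1)^k\binom{n}{k}(k+1)^{-1}=(n+1)^{-1}$, after which $\sum_{n=0}^\infty\frac{1}{(n+1)(n+2)}$ telescopes to $1$.

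The main obstacle is the rigorous justification of the two term-by-term manipulations, namely the interchange of the order of summation and the extraction of the Taylor coefficients. I would dispose of it by observing that each inner finite difference $\sum_{k=0}^{n}(-1)^k\binom{n}{k}(k+1)^{-s}$ is an entire function of $s$ and that the outer series in \eqref{jhd38hdsa}, \eqref{jncw9eucn} and \eqref{c289hdnwe3} converge uniformly on compact subsets of $\mathbbm{C}\setminus\{1\}$ (indeed they represent $(s-1)\zeta(s)$ and $\zeta(s)-\frac{1}{s-1}$, which are analytic near $s=1$). Uniform convergence on a disc about $s=1$ permits differentiation under the summation sign to every order, so that the Taylor coefficients of the sum are the termwise sums of the Taylor coefficients of the individual terms, which is exactly what the coefficient comparison requires.
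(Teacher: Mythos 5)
Your proposal is correct and follows essentially the same route as the paper: the paper likewise expands the entire function $(s-1)\zeta(s)$ (respectively $\zeta(s)-(s-1)^{-1}$) into its Taylor series about $s=1$ using Ser's and Hasse's representations and identifies the coefficients with those of the Laurent expansion defining the $\gamma_m$. Your additional verification of the constant terms and the remark on uniform convergence justifying the termwise differentiation only make explicit what the paper leaves implicit.
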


\begin{proof}
The Stieltjes constants $\gamma_m\,$, $m\in\mathbbm{N}_0$, 
are the coefficients appearing in the regular part of the Laurent series expansion 
of $\zeta(s)$ about its unique pole $s=1$
\begin{eqnarray}
\label{dhd73vj6s1}
\zeta(s)\,=\,\frac{1}{\,s-1\,} + \gamma+\sum_{m=1}^\infty\frac{(-1)^m\gamma_m
}{m!}\,(s-1)^m \,,
\qquad s\in\mathbbm{C}\setminus\!\{1\}\,,
\end{eqnarray}
and $\gamma_0= \gamma$.\footnote{For more information on $\gamma_m\,$, 
see \cite[p.~166 \emph{et seq.}]{finch_01}, \cite{iaroslav_08}, \cite{iaroslav_07}, and the literature given therein.} 
Since the function $\,(s-1)\zeta(s)\,$ is holomorphic on the entire complex $s$--plane,
it may be expanded into the Taylor series. The latter expansion, applied to \eqref{jhd38hdsa} and 
\eqref{jncw9eucn} in a neighborhood of $s=1$, produces formul\ae~\eqref{oi0udhnnee}--\eqref{c34fc34tv}.
Proceeding similarly with $\,\zeta(s)-(s-1)^{-1}$ and Ser's formula \eqref{c289hdnwe3} yields \eqref{x23dcrcf}.
\end{proof}

\begin{corollary}\label{dc243d0jhxhdf}
The normalized Maclaurin coefficients $\delta_m$ of the regular function $\,\zeta(s)-(s-1)^{-1}\,$
admit the following series representation
\begin{eqnarray}\label{d23d2rgrrew}
\delta_m \,=\sum_{n=0}^\infty\big| G_{n+1}\big|\sum_{k=0}^{n}
(-1)^k \binom{n}{k}\ln^{m}(k+1)\,,
\qquad m\in\mathbbm{N}\,.
\end{eqnarray}
\end{corollary}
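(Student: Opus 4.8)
The plan is to read off the coefficients $\delta_m$ directly from Ser's globally convergent series \eqref{c289hdnwe3}, in exact parallel to the derivation of \eqref{x23dcrcf} in Corollary \ref{ihy923hbs}; the only difference is that the Taylor expansion is now centered at $s=0$ rather than at $s=1$. Since $\,\zeta(s)-(s-1)^{-1}\,$ extends to an entire function, its Maclaurin series converges on the whole complex plane, and, in analogy with the Stieltjes constants in \eqref{dhd73vj6s1}, the normalized Maclaurin coefficients $\delta_m$ are defined by
\[
\zeta(s)-\frac{1}{\,s-1\,}\,=\,\sum_{m=0}^\infty\frac{(-1)^m\delta_m}{m!}\,s^m\,,\qquad s\in\mathbbm{C}\,.
\]

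First I would remove the pole from \eqref{c289hdnwe3}, writing
\[
\zeta(s)-\frac{1}{\,s-1\,}\,=\,\sum_{n=0}^\infty\big|G_{n+1}\big|\sum_{k=0}^{n}(-1)^k\binom{n}{k}(k+1)^{-s}\,,
\]
and then substitute the elementary expansion $\,(k+1)^{-s}=\sum_{m=0}^\infty\frac{(-1)^m\ln^m(k+1)}{m!}\,s^m$. Interchanging the summation over the index pair $(n,k)$ with the summation over $m$ and gathering the coefficient of $s^m$ produces
\[
\zeta(s)-\frac{1}{\,s-1\,}\,=\,\sum_{m=0}^\infty\frac{(-1)^m}{m!}\,s^m\sum_{n=0}^\infty\big|G_{n+1}\big|\sum_{k=0}^{n}(-1)^k\binom{n}{k}\ln^m(k+1)\,.
\]
Equating this with the normalized Maclaurin series above, coefficient by coefficient in $s^m$, immediately yields \eqref{d23d2rgrrew} for every $m\in\mathbbm{N}$.

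The step that requires the most care is the interchange of the two summations, that is, the term-by-term Maclaurin expansion of \eqref{c289hdnwe3}. This is legitimate because the series represents the entire function $\,\zeta(s)-(s-1)^{-1}\,$ and converges locally uniformly on $\mathbbm{C}$, so it may be differentiated term by term arbitrarily often and evaluated at $s=0$ --- which is precisely the extraction of the coefficient of $s^m$; the rapid decay $\,|G_{n+1}|\sim(n\ln^2 n)^{-1}\,$ is what underlies this global, locally uniform convergence. Once this justification is supplied, what remains is the routine coefficient comparison carried out above. The restriction to $m\in\mathbbm{N}$ parallels that in \eqref{x23dcrcf}; the excluded case $m=0$ merely returns the constant term $\delta_0=\zeta(0)+1=\tfrac12$ and is not needed here.
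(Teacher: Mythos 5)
Your proposal is correct and follows essentially the same route as the paper: subtract the pole from Ser's series \eqref{c289hdnwe3}, expand the left-hand side as a Maclaurin series in $s$, and equate coefficients of $s^m$ against the defining expansion \eqref{uyh0hf5}. The only difference is that you explicitly justify the term-by-term expansion via locally uniform convergence, a point the paper leaves implicit.
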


\begin{proof}
Analogously to the Stieltjes constants $\gamma_m$, may be introduced the normalized 
Maclaurin coefficients $\delta_m$ of the regular function $\,\zeta(s)-(s-1)^{-1}\,$
\be
\label{uyh0hf5}
\zeta(s)\,=\,\frac{1}{\,s-1\,} + \frac{1}{2} + \sum_{m=1}^\infty\frac{(-1)^m\delta_m
}{m!}\,s^m \,,
\qquad s\in\mathbbm{C}\setminus\!\{1\}.
\ee
They are important in the study of the $\zeta$-function 
and are polynomially related to the Stieltjes constants: $\,\delta_1=\frac12\ln2\pi-1\,$,
$\,\delta_2=\gamma_1+\frac12\gamma^2-\frac12\ln^2\! 2\pi-\frac{1}{24}\pi^2+2\,$, 
\ldots\footnote{For more information on $\delta_m\,$, see e.g.~\cite{lehmer_02}, 
\cite{sitaramachandrarao_01}, \cite{connon_08}, \cite[p.~168 \emph{et seq.}]{finch_01}.}
Now, from Ser's formula \eqref{c289hdnwe3}, it follows that 
\be\notag
\sum_{n=0}^\infty\big| G_{n+1}\big
|\sum_{k=0}^{n} (-1)^k \binom{n}{k}(k+1)^{-s}\,=\,\frac{1}{2} + \sum_{m=1}^\infty\frac{(-1)^m\delta_m
}{m!}\,s^m \,.
\ee
Expanding the left-hand side into the Maclaurin series and equating the coefficients of $s^m$ yield \eqref{d23d2rgrrew}.
\end{proof}

\section{A series for the zeta-function with the Cauchy numbers of the second kind (N{\o}rlund numbers)}
An appropriate rearrangement of terms in another series of Ser, formula \eqref{c289hdnwe3}, also leads to an interesting result. 
In particular, we may deduce a series very similar to \eqref{c289hdnwe3}, but containing
the normalized Cauchy numbers of the second kind $C_n$ instead of $G_n$.

The normalized Cauchy numbers of the second kind $C_n$, related to the ordinary Cauchy numbers of the
second kind $C_{2,n}$ as $C_n\equiv C_{2,n}/n!$ (numbers $C_{2,n}$ are also known as signless \emph{generalized Bernoulli numbers}
and signless \emph{N{\o}rlund numbers} $\big|B_n^{(n)}\big|=(-1)^n B_n^{(n)}$ ), appear in the power series expansion
of $\,\big[(1\pm z)\ln(1\pm z)\big]^{-1}$ and of $\ln\ln(1\pm z)$ in a neighborhood of zero
\be
\begin{cases}
\displaystyle
\frac{z}{(1+z)\ln(1+z)}\,=\,1+\sum_{n=1}^\infty\!C_n\, (-z)^n\, ,\quad
\qquad|z|<1\,,  \\[6mm]
\displaystyle
\ln\ln(1+z)\,=\,\ln z
+\sum_{n=1}^\infty\!\frac{C_n}{n} \, (-z)^n\, ,\quad
\qquad|z|<1\,,
\end{cases}
\ee
and may also be defined explicitly
\be
\begin{array}{ll}
\displaystyle C_n\,\equiv\,\frac{C_{2,n}}{n!}\, & \displaystyle =\,\frac{\,(-1)^n B_n^{(n)}\,}{n!}\, = \,
(-1)^n\!\!\int\limits_{0}^1\!\! \binom{x-1}{n} \, dx \,= \\[4mm]
\displaystyle
&\displaystyle  =\,
\frac{1}{n!}\!\int\limits_0^1\! (x)_n\, dx \,
=\,\frac{1}{n!}\!
\sum\limits_{l=1}^{n} \frac{|S_1(n,l)|}{l+1} \,
\,,\qquad n\in\mathbbm{N}\,.
\end{array}
\ee
They also are linked to Gregory's coefficients via the recurrence relation $\,C_{n-1}-C_{n}=|G_{n}|\,$, 
which is sometimes written as
\be
C_n \,=\,1-\sum_{k=1}^{n}\big|\,G_k\big| \,,\qquad n\in\mathbbm{N}\,.\quad\footnotemark
\ee
\footnotetext{This formula incorrectly appears in \cite[p.~267, 269]{jordan_01}:
in several places, the formul\ae~for $\psi_n(-1)$ should contain the sum with the upper bound $n$ instead of $n+1$.}
The numbers $C_n$ are positive rational and always decrease with $n$;
they behave as $\,\ln^{-1}n\,$ at $n\to\infty$ and may be bounded from below and from above accordingly
to formul\ae~(53)--(54) from \cite{iaroslav_08}. The first few values are: 
$C_{1}=\nicefrac{1}{2}\,$, \label{lkjfc02kme} $C_{2}=\nicefrac{5}{12}\,$,
$C_{3}=\nicefrac{3}{8}\,$, $C_{4}=\nicefrac{251}{720}\,$,
$C_{5}=\nicefrac{95}{288}\,$, $C_{6}=\nicefrac{19\,087}{60\,480}\,$, $\ldots$\footnote{See also OEIS A002657 and A002790, which
are the numerators and denominators respectively of $C_{2,n}=n!\,C_{n}$.}
For more information on the Cauchy numbers of the second kind, see \cite[pp.~150--151]{norlund_02}, \cite[p.~12]{davis_02}, 
\cite{norlund_01}, \cite[pp.~127--136]{milne_01}, \cite[vol.~III, pp.~257--259]{bateman_01},
\cite[pp.~293--294, \no13]{comtet_01}, \cite{howard_03,adelberg_01,zhao_01,qi_01}, 
\cite[pp.~406, 410, 414--415, 428--430]{iaroslav_08}, \cite{iaroslav_09}.

\begin{theorem}\label{h293hs}
The $\zeta$-function may be represented by the following globally convergent series
\begin{eqnarray}
\zeta(s)\!&&
=\,\frac{1}{\,s-1\,}+1-\sum_{n=0}^\infty C_{n+1}
\sum_{k=0}^{n} (-1)^k \binom{n}{k}(k+2)^{-s}\, =\label{jdf9243dn2} \\[3mm]
&&
=\,\frac{1}{\,s-1\,} + 1 - 2^{-s-1} - \frac{5}{12}\big(2^{-s}-3^{-s}\big) 
- \frac{3}{8}\left(2^{-s}-2\cdot3^{-s}+4^{-s}\right)-\ldots\notag
\end{eqnarray}
$s\in\mathbbm{C}\setminus\!\{1\}$,
where $C_n$ are the normalized Cauchy numbers of the second kind. 
\end{theorem}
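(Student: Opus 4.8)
The plan is to derive \eqref{jdf9243dn2} from Ser's series \eqref{c289hdnwe3} by a summation by parts, exactly as the heading of this section suggests. I would introduce the shorthands $D_n:=\sum_{k=0}^{n}(-1)^k\binom{n}{k}(k+1)^{-s}$ and $E_n:=\sum_{k=0}^{n}(-1)^k\binom{n}{k}(k+2)^{-s}$ for the two families of finite differences involved, so that \eqref{c289hdnwe3} reads $\zeta(s)-(s-1)^{-1}=\sum_{n=0}^{\infty}|G_{n+1}|\,D_n$, while \eqref{jdf9243dn2} is the assertion $\zeta(s)-(s-1)^{-1}=1-\sum_{n=0}^{\infty}C_{n+1}E_n$. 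First I would rewrite the Gregory coefficients as differences of Cauchy numbers of the second kind: from $C_{n-1}-C_n=|G_n|$ (equivalently $C_n=1-\sum_{k=1}^{n}|G_k|$, which also fixes $C_0=1$) one gets $|G_{n+1}|=C_n-C_{n+1}$, so the left-hand side becomes $\sum_{n=0}^{\infty}(C_n-C_{n+1})D_n$.

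The combinatorial heart of the argument is a recurrence linking the two families. Applying Pascal's rule $\binom{n}{k}=\binom{n-1}{k}+\binom{n-1}{k-1}$ to $D_n$ and re-indexing the resulting second piece gives $D_n-D_{n-1}=-E_{n-1}$. I would then perform the summation by parts on the partial sums, using $C_0=1$ and $D_0=1$:
\begin{align*}
\sum_{n=0}^{N}(C_n-C_{n+1})D_n &= 1 + \sum_{n=1}^{N}C_n\big(D_n-D_{n-1}\big) - C_{N+1}D_N \\
&= 1 - \sum_{n=0}^{N-1}C_{n+1}E_n - C_{N+1}D_N .
\end{align*}
Letting $N\to\infty$ and discarding the boundary term would give precisely $\zeta(s)-(s-1)^{-1}=1-\sum_{n=0}^{\infty}C_{n+1}E_n$, which is \eqref{jdf9243dn2}.

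The hard part is the analytic justification of this limit: one must show that $C_{N+1}D_N\to0$ and that the new series $\sum_n C_{n+1}E_n$ converges. For this I would invoke the integral representation
\[
\sum_{k=0}^{n}(-1)^k\binom{n}{k}(k+v)^{-s}=\frac{1}{\Gamma(s)}\int_0^\infty t^{s-1}e^{-vt}\big(1-e^{-t}\big)^n\,dt,\qquad \Re s>0,
\]
and, via the substitution $u=1-e^{-t}$ together with Laplace's method near $u=1$, extract the asymptotics $D_n=O\big(n^{-1}(\ln n)^{\Re s-1}\big)$ and $E_n=O\big(n^{-2}(\ln n)^{\Re s-1}\big)$. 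Combined with the bound $C_n=O(1/\ln n)$ recalled in the text, these make $C_{N+1}D_N\to0$ and render $\sum_n C_{n+1}E_n$ absolutely, hence locally uniformly, convergent for $\Re s>0$. Finally, since $\zeta(s)-(s-1)^{-1}$ is entire and the resulting series defines a holomorphic function on $\mathbbm{C}\setminus\!\{1\}$, the identity would extend from the half-plane $\Re s>0$ to all $s\neq1$ by analytic continuation, establishing the asserted global convergence.
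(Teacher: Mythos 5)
Your proposal is correct and follows essentially the same route as the paper: both start from Ser's series \eqref{c289hdnwe3}, write $\big|G_{n+1}\big|=C_n-C_{n+1}$, and use Pascal's rule to turn the finite differences $\Delta^n 1^{-s}$ into $\Delta^n 2^{-s}$, your explicit Abel summation with the boundary terms $C_0D_0=1$ and $C_{N+1}D_N\to0$ being a tidier rendering of the paper's combination of Fontana's identity $\sum\big|G_n\big|=1$ with a split-and-reindex of the sum (indeed tidier, since the two series the paper momentarily separates are individually divergent). The only soft spot is your closing step: analytic continuation by itself does not produce convergence of the new series for $\Re s\leqslant0$ --- for the asserted \emph{global} convergence one needs the asymptotics of $E_n$ for all complex $s$, e.g.\ via N{\o}rlund--Rice integrals --- but the paper is no more explicit there, simply asserting that global convergence follows from that of \eqref{c289hdnwe3}.
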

\begin{proof}
Using Fontana's identity $\sum|G_n|=1\,$,  where the summation extends over 
positive integers $n$, see e.g.~\cite[p.~410, Eq.~(20)]{iaroslav_08}, Ser's formula \eqref{c289hdnwe3} takes the form
\begin{eqnarray}
\zeta(s)\, =\,\frac{1}{\,s-1\,}+1+\sum_{n=1}^\infty\big| G_{n+1}\big
|\sum_{k=1}^{n} (-1)^k \binom{n}{k}(k+1)^{-s} \,.\label{j932deed} 
\end{eqnarray}
Now, by taking into account that \mbox{$\,C_{n-1}-C_{n}=|G_{n}|\,$}, and by employing the recurrence
property of the binomial coefficients
\be\notag
\binom{n+1}{k}-\binom{n}{k} = \binom{n}{k-1}\,,
\ee
we find that  
\begin{eqnarray}
&&\hspace{-33mm}\zeta(s)-\frac{1}{\,s-1\,}-1\,=   \notag\\
\notag
\end{eqnarray}

\begin{eqnarray}
&&\quad = 
\sum_{n=1}^\infty C_n\sum_{k=1}^{n} (-1)^k \binom{n}{k}(k+1)^{-s} 
- \sum_{n=1}^\infty C_{n+1}\sum_{k=1}^{n} (-1)^k \binom{n}{k}(k+1)^{-s}\notag\\[3mm]
&&\quad=\sum_{n=0}^\infty C_{n+1}\sum_{k=1}^{n+1} (-1)^k \binom{n+1}{k}(k+1)^{-s} 
- \sum_{n=1}^\infty C_{n+1}\sum_{k=1}^{n} (-1)^k \binom{n}{k}(k+1)^{-s}\notag\\[3mm] 
&&\quad= \,-C_1\, 2^{-s} +\sum_{n=1}^\infty C_{n+1}\left\{\sum_{k=1}^{n+1} (-1)^k \binom{n+1}{k}(k+1)^{-s} 
- \sum_{k=1}^{n} (-1)^k \binom{n}{k}(k+1)^{-s}\right\}\notag\\[3mm] 
&&\quad= \, - C_1\,2^{-s}+\sum_{n=1}^\infty C_{n+1}\left\{\frac{(-1)^{n+1}}{(n+2)^{s}} 
+ \sum_{k=1}^{n} (-1)^k \binom{n}{k-1}(k+1)^{-s}\right\}\notag\\[3mm] 
&&\quad= - C_1\,2^{-s}- \sum_{n=1}^\infty C_{n+1}
\sum_{k=0}^{n} (-1)^k \binom{n}{k}(k+2)^{-s} \,,
\end{eqnarray}
which is the same as \eqref{jdf9243dn2}, since $\,C_1\,2^{-s}$ is the zeroth term of the last sum.
The global convergence of \eqref{jdf9243dn2} follows from that of \eqref{c289hdnwe3}. 
\end{proof}

\begin{corollary}
The Stieltjes constants may be represented by the following series
\be
\gamma_m\,=\,-  \sum_{n=0}^\infty C_{n+1}\sum_{k=0}^{n} (-1)^k \binom{n}{k}\frac{\ln^m (k+2)}{k+2}
\,,\qquad m\in\mathbbm{N}_0\,.
\ee
\end{corollary}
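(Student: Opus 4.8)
The plan is to read off the Taylor coefficients at $s=1$ of the representation furnished by Theorem~\ref{h293hs}, exactly in the spirit of the proof of Corollary~\ref{ihy923hbs}. Since $\,\zeta(s)-(s-1)^{-1}\,$ is entire, its expansion about $s=1$ is governed by \eqref{dhd73vj6s1}, namely
\[
\zeta(s)-\frac{1}{\,s-1\,}\,=\,\sum_{m=0}^\infty\frac{(-1)^m\gamma_m}{m!}\,(s-1)^m\,,\qquad \gamma_0=\gamma\,,
\]
so that the coefficient of $(s-1)^m$ equals $(-1)^m\gamma_m/m!$. The whole task is therefore to expand the right-hand side of \eqref{jdf9243dn2} in powers of $(s-1)$ and to compare.

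First I would expand each base term as
\[
(k+2)^{-s}\,=\,\frac{1}{\,k+2\,}\,e^{-(s-1)\ln(k+2)}\,=\,\frac{1}{\,k+2\,}\sum_{m=0}^\infty\frac{(-1)^m\ln^m(k+2)}{m!}\,(s-1)^m\,.
\]
Substituting this into \eqref{jdf9243dn2}, carrying out the finite inner $k$-summation, and equating the coefficient of $(s-1)^m$ on both sides gives, for each $m\geq1$,
\[
\frac{(-1)^m\gamma_m}{m!}\,=\,-\frac{(-1)^m}{m!}\sum_{n=0}^\infty C_{n+1}\sum_{k=0}^{n}(-1)^k\binom{n}{k}\frac{\ln^m(k+2)}{\,k+2\,}\,,
\]
which is precisely the asserted series after cancelling the common factor $(-1)^m/m!$. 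Note that the standalone constant $1$ in \eqref{jdf9243dn2} contributes only to the coefficient of $(s-1)^0$; hence it is the $m=0$ relation alone that carries this additive constant and must be handled with a little extra care.

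The step I expect to be the genuine obstacle is the interchange of the infinite summation over $n$ with the term-by-term Taylor expansion in $(s-1)$, which is what legitimises identifying the coefficient of $(s-1)^m$ of the whole series with the series of the individual coefficients. It does not follow from formal manipulation, but can be secured from the global convergence already established in Theorem~\ref{h293hs}: the numbers $C_{n+1}$ decay and the inner alternating finite difference $\sum_{k=0}^{n}(-1)^k\binom{n}{k}(k+2)^{-s}$ is uniformly bounded on a compact neighbourhood of $s=1$, so a Weierstrass $M$-test (or dominated-convergence) argument permits differentiating the series arbitrarily often and evaluating at $s=1$. A purely algebraic alternative bypasses all analytic subtleties: inserting the telescoping relation $|G_{n+1}|=C_n-C_{n+1}$ (with $C_0=1$) together with the binomial recurrence into the already-proven Ser-type expansion \eqref{x23dcrcf} reproduces the claimed formula by exactly the rearrangement used in the proof of Theorem~\ref{h293hs}.
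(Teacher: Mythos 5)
Your proposal is correct and follows essentially the same route as the paper: Taylor-expanding the entire function $\zeta(s)-(s-1)^{-1}$ about $s=1$ via the series of Theorem~\ref{h293hs} and equating the coefficients of $(s-1)^m$, exactly as in Corollary~\ref{ihy923hbs}. Your observation that the standalone constant $1$ enters only the $m=0$ coefficient is likewise consistent with the paper, whose own proof handles that case separately by writing $\gamma=1-\sum_{n=0}^\infty C_{n+1}\big[(n+1)(n+2)\big]^{-1}$.
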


\begin{proof}\label{oi34fhs}
Proceeding analogously to the proof of Corollary \ref{ihy923hbs}, we obtain the required result.
Moreover, for Euler's constant $\gamma=\gamma_0$, we have
an expression which may be simplified thanks to 
the fact $\left.(-1)^n\Delta^n x^{-1}\right|_{x=2}=\big[(n+1)\,(n+2)\big]^{-1}$, so that
\begin{eqnarray}
\gamma\!\!&&=\,1 -  \sum_{n=0}^\infty C_{n+1}
\sum_{k=0}^{n} \frac{(-1)^k}{k+2} \binom{n}{k}\,=
\,1 -  \sum_{n=0}^\infty \frac{C_{n+1}}{\,(n+1)\,(n+2)\,} \,=\notag\\[1mm] 
&&\,=\,
1-\frac{1}{4}-\frac{5}{72}-\frac{1}{32}-\frac{251}{14\,400}-\frac
{19}{1728} -
\frac{19\,087}{2\,540\,160} - \ldots\notag
\end{eqnarray}
This series was already encountered in earlier works 
\cite[p.~380, Eq.~(34)]{iaroslav_08}, \cite[p.~429, Eq.~(95)]{iaroslav_09}.
\end{proof}

\begin{corollary}
The coefficients $\delta_m$ admit the following series representation
\be
\delta_m\,=\,- \sum_{n=0}^\infty C_{n+1}\sum_{k=0}^{n} (-1)^k \binom{n}{k}\ln^m (k+2)
\,,\qquad m\in\mathbbm{N}\,.
\ee
\end{corollary}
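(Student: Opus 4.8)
The plan is to follow closely the strategy of Corollaries~\ref{ihy923hbs} and~\ref{dc243d0jhxhdf}, but to read off the Maclaurin coefficients $\delta_m$ from the representation of Theorem~\ref{h293hs} rather than from Ser's formula~\eqref{c289hdnwe3}. First I would put~\eqref{jdf9243dn2} into the regularized form
\[
\zeta(s)-\frac{1}{\,s-1\,}\,=\,1-\sum_{n=0}^\infty C_{n+1}\sum_{k=0}^{n}(-1)^k\binom{n}{k}(k+2)^{-s}\,,
\]
and equate it with the defining expansion~\eqref{uyh0hf5}, that is, with $\tfrac12+\sum_{m=1}^\infty\frac{(-1)^m\delta_m}{m!}\,s^m$. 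The whole matter then reduces to expanding the right-hand side above in powers of $s$ about $s=0$.

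Next I would insert the elementary Maclaurin expansion $(k+2)^{-s}=\sum_{m=0}^\infty\frac{(-1)^m\ln^m(k+2)}{m!}\,s^m$ into every term and collect like powers of $s$. The $m=0$ contribution is $\sum_{n=0}^\infty C_{n+1}\sum_{k=0}^{n}(-1)^k\binom{n}{k}$; since $\sum_{k=0}^{n}(-1)^k\binom{n}{k}=0$ for $n\geq1$, only the term $n=0$ survives and equals $C_1=\tfrac12$, so the constant term of the right-hand side is $1-\tfrac12=\tfrac12$, in agreement with~\eqref{uyh0hf5} and serving as a consistency check. For each $m\geq1$, equating the coefficients of $s^m$ gives
\[
\frac{(-1)^m\delta_m}{m!}\,=\,-\frac{(-1)^m}{m!}\sum_{n=0}^\infty C_{n+1}\sum_{k=0}^{n}(-1)^k\binom{n}{k}\ln^m(k+2)\,,
\]
and cancelling the common factor $(-1)^m/m!$ yields the asserted formula.

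The single point that genuinely requires care is the interchange of the summation over $n$ with the expansion in $m$, i.e.\ the claim that the Maclaurin coefficients of the whole series equal the sums of the Maclaurin coefficients of its individual terms. This is the same justification already used in Corollaries~\ref{ihy923hbs}--\ref{dc243d0jhxhdf}: the function $\zeta(s)-(s-1)^{-1}$ is entire, and the series in~\eqref{jdf9243dn2} converges globally---indeed uniformly on compact subsets of the $s$-plane---so repeated term-by-term differentiation at $s=0$ is legitimate. Everything else is the routine coefficient bookkeeping displayed above.
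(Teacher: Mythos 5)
Your proposal is correct and follows exactly the route the paper intends: its proof of this corollary is simply the instruction to repeat the argument of Corollary \ref{dc243d0jhxhdf} with Ser's formula \eqref{c289hdnwe3} replaced by \eqref{jdf9243dn2}, which is precisely what you carry out. The coefficient bookkeeping, the $m=0$ consistency check giving $\tfrac12$, and the appeal to global convergence for the term-by-term expansion all match the paper's (implicit) argument.
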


\begin{proof}
Analogous to Corollary \ref{dc243d0jhxhdf}, except that 
we replace \eqref{c289hdnwe3} by \eqref{jdf9243dn2}.
\end{proof}

\section{Generalizations of series with Gregory's coefficients and Cauchy numbers of the second kind 
to the Hurwitz zeta-function and to some Dirichlet series}
\begin{theorem}\label{ij298034hd}
The Hurwitz zeta-function $\,\zeta(s,v)$ may be represented by the following globally
series $\,s\in\mathbbm{C}\setminus\!\{1\}$ with the finite difference $\Delta^n v^{-s}$
\begin{eqnarray}
\zeta(s,v) \!\! && =\,\frac{v^{1-s}}{\,s-1\,}+\sum_{n=0}^\infty\big| G_{n+1}\big
|\sum_{k=0}^{n} (-1)^k \binom{n}{k}(k+v)^{-s}\,=\,\frac{v^{1-s}}{\,s-1\,} + \frac{1}{2} v^{-s}+  \notag\\[3mm] 
&& \quad +\frac{1}{12}\big[v^{-s}-(1+v)^{-s}\big] + 
\frac{1}{24}\big[v^{-s}-2 (1+v)^{-s}+(2+v)^{-s}\big] + \ldots\,,\qquad \label{jxh293nbwes}
\end{eqnarray}
where $\,\Re v>0\,$, 
\begin{eqnarray}
\zeta(s,v)\!\! && =\,\frac{(v-1)^{1-s}}{\,s-1\,}-\sum_{n=0}^\infty C_{n+1}
\sum_{k=0}^{n} (-1)^k \binom{n}{k}(k+v)^{-s}\, =\,\frac{(v-1)^{1-s}}{\,s-1\,}  - \frac{1}{2} v^{-s} - \quad \notag\\[3mm] 
&& \quad - \frac{5}{12}\big[v^{-s}-(1+v)^{-s}\big]
- \frac{3}{8}\big[v^{-s}-2\,(1+v)^{-s}+(2+v)^{-s})\big]-  \ldots\,,\qquad \label{jx20nsw}
\end{eqnarray}
where $\,\Re v>1\,$, and
\be\label{98y6b796}
\zeta(s,v)= \,\frac{1}{m\,(s-1)}\!\sum_{n=0}^{m-1} (v+a+n)^{1-s}  
+  \frac{1}{m}\!\sum_{n=0}^\infty (-1)^n N_{n+1,m}(a)
\sum_{k=0}^{n} (-1)^k \binom{n}{k} (k+v)^{-s}\quad
\ee
where $\,\Re v>-\Re a\,$, $\,\Re a \geqslant -1$ and $N_{n,m}(a)$ are the polynomials defined by
\be\label{oihf93h4}
N_{n,m}(a)\,\equiv\,\frac{1}{\,n!\,}\!\!\! \int\limits_a^{a+m}\!\! (x-n+1)_n\,  dx\,
=\!\!\int\limits_a^{a+m}\!\!\! \binom{x}{n}\,  dx\,
= \psi_{n+1}(a+m) - \psi_{n+1}(a)\,, \qquad
\ee
or equivalently by their generating function
\be\label{90823u4h32}
\frac{(1+z)^{a+m}-(1+z)^{a}}{\ln(1+z)}\,=
\sum_{n=0}^\infty N_{n,m}(a) \, z^n\,, \qquad |z|<1\,.
\ee
The function $\psi_n(x)$ is the antiderivative of the binomial coefficient and is also known
as the \emph{Bernoulli polynomials of the second kind} and the \emph{Fontana--Bessel polynomials}. 
All these series are similar to Hasse's series \eqref{jnd2h93ndd} and contain
the same finite difference $\Delta^n v^{-s}$. 
\end{theorem}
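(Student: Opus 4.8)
The plan is to prove the general representation \eqref{98y6b796} directly by finite-difference methods and then recover \eqref{jxh293nbwes} and \eqref{jx20nsw} as the special cases $m=1,\,a=0$ and $m=1,\,a=-1$. The starting observation is that the Hurwitz zeta-function is, up to sign, an antidifference of $v^{-s}$: since $\zeta(s,v)-\zeta(s,v+1)=v^{-s}$, the function $-\zeta(s,v)$ satisfies $\Delta[-\zeta(s,v)]=v^{-s}$. Applying Newton's forward-difference interpolation formula to $-\zeta(s,\cdot)$ and isolating the $n=0$ term then yields
\[
\zeta(s,v)-\zeta(s,v+x)=\sum_{n=0}^\infty\binom{x}{n+1}\Delta^n v^{-s}
=\sum_{n=0}^\infty\binom{x}{n+1}(-1)^n\sum_{k=0}^n(-1)^k\binom{n}{k}(k+v)^{-s},
\]
the second equality being merely the definition of $\Delta^n v^{-s}$. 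Under the hypotheses $\Re a\geqslant-1$ and $\Re v>-\Re a$ one has $\Re(v+x)>0$ for every $x$ in the segment $[a,a+m]$, so each term is well defined.

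Next I would integrate this identity in $x$ from $a$ to $a+m$ and interchange summation and integration. Because $\int_a^{a+m}\binom{x}{n+1}\,dx=N_{n+1,m}(a)$ by the very definition \eqref{oihf93h4}, the right-hand side becomes exactly $m$ times the series in \eqref{98y6b796}. For the left-hand side I would use the elementary antiderivative $\int\zeta(s,u)\,du=-\zeta(s-1,u)/(s-1)$ together with the telescoping identity $\zeta(s-1,v+a)-\zeta(s-1,v+a+m)=\sum_{n=0}^{m-1}(v+a+n)^{1-s}$, obtaining
\[
\int_a^{a+m}\!\big[\zeta(s,v)-\zeta(s,v+x)\big]\,dx
=m\,\zeta(s,v)-\frac{1}{\,s-1\,}\sum_{n=0}^{m-1}(v+a+n)^{1-s}.
\]
Dividing by $m$ and rearranging gives \eqref{98y6b796}. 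The specializations are then immediate: for $m=1,\,a=0$ one has $N_{n+1,1}(0)=\int_0^1\binom{x}{n+1}\,dx=G_{n+1}=(-1)^n|G_{n+1}|$, which turns \eqref{98y6b796} into \eqref{jxh293nbwes}; for $m=1,\,a=-1$ the substitution $x\mapsto x-1$ gives $N_{n+1,1}(-1)=\int_0^1\binom{x-1}{n+1}\,dx=(-1)^{n+1}C_{n+1}$, which reproduces \eqref{jx20nsw}.

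I expect the delicate points to be the convergence of the Newton series and the legitimacy of the term-by-term integration, and these constitute the main obstacle. I would settle them by invoking N{\o}rlund's theory: since $v^{-s}$ is holomorphic and of finite (indeed polynomial) growth in the half-plane $\Re v>0$, its forward-difference interpolation series converges, uniformly for $x$ in the compact segment $[a,a+m]$, which legitimizes the interchange. A cleaner, fully self-contained alternative — which I would use both to settle convergence and to cross-check the signs — is to start instead from the Mellin representations $\zeta(s,v)=\Gamma(s)^{-1}\!\int_0^\infty t^{s-1}e^{-vt}(1-e^{-t})^{-1}\,dt$ and $\sum_{k=0}^n(-1)^k\binom{n}{k}(k+v)^{-s}=\Gamma(s)^{-1}\!\int_0^\infty t^{s-1}e^{-vt}(1-e^{-t})^n\,dt$; summing the series under the integral sign via the generating function \eqref{90823u4h32} (with $1+z=e^{-t}$) collapses everything to integral representations of $\zeta(s,v)$ and $\zeta(s-1,\cdot)$ and again yields \eqref{98y6b796}. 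Either way, the identity is first established for $\Re s$ large, where all manipulations are absolutely convergent, and then extended to all $s\in\mathbbm{C}\setminus\{1\}$ by analytic continuation, the global convergence of the final series following from the known decay of the coefficients $N_{n+1,m}(a)$ (for $m=1$, that of $|G_{n+1}|$ and $C_{n+1}$ recalled in the Introduction).
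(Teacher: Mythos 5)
Your proposal is correct and follows essentially the same route as the paper: the Gregory--Newton interpolation formula applied to $\zeta(s,\cdot)$ combined with $\Delta^n\zeta(s,v)=-\Delta^{n-1}v^{-s}$, followed by term-by-term integration of $x$ over $[a,a+m]$ and the telescoping of $\zeta(s-1,v+a)-\zeta(s-1,v+a+m)$; the paper merely presents the two special cases $m=1$, $a=0$ and $m=1$, $a=-1$ first and the general formula afterwards, whereas you prove the general case and specialize, and your Mellin-integral cross-check is precisely the paper's ``second variant'' of proof.
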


\begin{proof}
\emph{First variant of proof of \eqref{jxh293nbwes}: }
Expanding the function $\zeta(s,x)$ into 
the Gregory--Newton interpolation series
(also known as the forward difference formula) in a neighborhood of $x=v$,
yields
\be\label{kj3094fn3er}
\zeta(s,x+v)\,=\,\zeta(s,v)+ \sum_{n=1}^\infty\! \binom{x}{n}\Delta^n \zeta(s,v)\,
=\,\zeta(s,v)+ \sum_{n=1}^\infty \frac{(x-n+1)_n}{n!} \,\Delta^n \zeta(s,v)\,,
\ee
where $\Delta^n f(v)$ is the $n$th \emph{finite forward difference} of $f(x)$ at point $v$
\begin{eqnarray}
\Delta f(v)\! &&\displaystyle\equiv\,\left.\vphantom{\ln^{m+1}}\Delta f(x)\right|_{x=v}
=\, \left.\vphantom{\ln^{m+1}} \right\{\! f(x+1) -  f(x) \! \left\}\vphantom{\ln^{m+1}}\right|_{x=v}=\,  f(v+1) -  f(v)\,,\notag\\
\ldots\ldots  && \notag\\[1mm]
\Delta^n f(v)\! &&\displaystyle
=\,\Delta^{n-1} f(v+1) - \Delta^{n-1} f(v)\,=\,\ldots \label{0i3u40jfmnr}\\[3mm]
&&\displaystyle\notag
\,\ldots\,= \sum\limits_{k=0}^n (-1)^k \binom{n}{k} f(n-k+v)\,
=\,(-1)^n\sum\limits_{k=0}^n (-1)^k \binom{n}{k} f(v+k)\qquad
\end {eqnarray}
with 
$\,\Delta^0 f(v)\equiv f(v)\,$ by convention\footnote{Note, however, that due to the fact that finite differences may be defined
in slightly different ways and that there also exist \emph{forward}, \emph{central}, \emph{backward}
and other finite differences, 
our definition for $\Delta^n f(v)$ may not be shared by others.
Thus, some authors call the quantity $(-1)^n\Delta^n f(v)$ the $n$th finite difference,
see e.g.~\cite[p.~270, Eq.~(14.17)]{vorobiev_01} (we also employed the latter definition in \cite[p.~413, Eq.~(39)]{iaroslav_08}).
For more details on the Gregory--Newton interpolation formula, 
see e.g.~\cite[\S~9$\cdot$02]{jeffreys_02}, \cite[pp.~57--59]{milne_01}, \cite[pp.~184, 219 \emph{et seq.}, 357]{jordan_01},
\cite[Ch.~III]{boole_01}, \cite[Ch.~1 \& 9]{hamming_01}, \cite{norlund_02},  \cite{whittaker_02}, \cite[Ch.~3]{krylov_01}, \cite[p.~192]{goldstine_01}, 
\cite[Ch.~V]{melentiev_01_eng},  \cite[Ch.~III, pp.~184--185]{kantorovich_01_eng}, \cite{milne_02}, \cite{slavic_02}, \cite[p.~31]{phillips_02}.}.
Since the operator of finite difference $\,\Delta^n\,$ is linear
and because $\,\zeta(s,v+1)\,=\,\zeta(s,v) - v^{-s}$,
it follows from \eqref{0i3u40jfmnr} that $\,\Delta^n \zeta(s,v) \,=\,-\Delta^{n-1}v^{-s} \,$.
Formula \eqref{kj3094fn3er}, therefore, becomes
\begin{eqnarray}\label{8934dh2nd4}
\zeta(s,v) \,=\, \sum_{n=0}^\infty (x+v+n)^{-s} 
+ \sum_{n=1}^\infty \frac{(x-n+1)_n}{n!} \, \Delta^{n-1} v^{-s} \,.
\end{eqnarray}
Integrating termwise the latter equality over $\,x\in[0,1]\,$ and accounting for the fact that 
\be\label{094ufn32o4nf}
\int\limits_0^1 \! (x+v)^{-s} dx \,=\, \frac{1}{s-1}\left\{v^{1-s} -  (v+1)^{1-s}\right\}\,,
\ee
as well as using \eqref{ldhd9ehn}, we have
\begin{eqnarray}
\zeta(s,v) \! &&\displaystyle  =\, \frac{1}{s-1}\left\{\sum_{n=0}^\infty (v+n)^{1-s} - \sum_{n=0}^\infty (v+1+n)^{1-s} \right\} 
+ \sum_{n=1}^\infty G_n\, \Delta^{n-1} v^{-s} \notag\\[3mm]
&&\displaystyle =\, \frac{v^{1-s}}{s-1}  +\sum_{n=0}^\infty G_{n+1}\, \Delta^{n} v^{-s} \,,\label{h928h34dbd}
\end{eqnarray}
which is identical with \eqref{jxh293nbwes}, because $\,G_{n+1}=(-1)^n|G_{n+1}|\,$ and
\be\label{98yrbfe45}
\Delta^{n}v^{-s} \,\equiv 
\left. \Delta^{n} x^{-s}\right|_{x=v}\,= \,(-1)^n\!\sum_{k=0}^{n} (-1)^k \binom{n}{k} (k+v)^{-s}\,.
\ee
The reader may also note that if we put $v=1$ in \eqref{8934dh2nd4} and 
\eqref{98yrbfe45}, then we obtain Ser's formul\ae~(3) and (2) respectively. \qed

\emph{Second variant of proof of \eqref{jxh293nbwes}: }
Consider the generating equation for the numbers $G_n$, formula \eqref{eq32}. 
Dividing it by $z$ and then putting $\,z=e^{-x}-1\,$ yields
\begin{eqnarray}
\frac{1}{\,1-e^{-x}\,}\!\!&&=\,\frac{1}{\,x\,} + \big|G_{1}\big| 
+\sum_{n=1}^\infty \big|G_{n+1}\big| \, (1-e^{-x})^{n} \label{hgc38ebd}\\[3mm]
&&=\,\frac{1}{\,x\,} + \big|G_{1}\big| 
+\sum_{n=1}^\infty \big|G_{n+1}\big|\sum_{k=0}^n (-1)^k \binom{n}{k} e^{-kx} \,,\qquad x>0\,,\notag
\end{eqnarray}
since $\,|G_{n+1}|=(-1)^{n}G_{n+1}\,$.
Now, using the well--known integral representation of the Hurwitz $\zeta$-function
\be\notag
\zeta(s,v)\,=\,\frac{1}{\,\Gamma(s)\,}\!\int\limits_{0}^\infty\! \frac{e^{-vx}\, x^{s-1}}{\,1-e^{-x}\,}\, dx
\ee
and Euler's formul\ae 
\be\notag
\frac{v^{1-s}}{\,s-1\,}\,=\,\frac{1}{\,\Gamma(s)\,}\!\int\limits_{0}^\infty \! e^{-vx}\, x^{s-2}\, dx\,,\qquad
\quad v^{-s}\,=\,\frac{1}{\,\Gamma(s)\,}\!\int\limits_{0}^\infty \! e^{-vx}\, x^{s-1}\, dx\,,
\ee
we obtain
\begin{eqnarray}
\zeta(s,v)-\frac{v^{1-s}}{\,s-1\,}- \big|G_{1}\big|\,v^{-s} \!\! &&=\,\frac{1}{\,\Gamma(s)\,}\!\int\limits_{0}^\infty\!
e^{-vx}\, x^{s-1} \left\{\frac{1}{\,1-e^{-x}\,}-\frac{1}{\,x\,}- \big|G_{1}\big|\right\} \, dx \notag\\[3mm]
&&
=\,\frac{1}{\,\Gamma(s)\,}\!\sum_{n=1}^\infty \big|G_{n+1}\big| \sum_{k=0}^n (-1)^k \binom{n}{k} \!
\int\limits_{0}^\infty\! e^{-(k+v)x}\, x^{s-1} \, dx \notag\\[3mm]
&&
=\sum_{n=1}^\infty \big|G_{n+1}\big| \sum_{k=0}^n (-1)^k \binom{n}{k} (k+v)^{-s}. \notag
\end{eqnarray}
Remarking that $\big|G_{1}\big|\,v^{-s}$ is actually the term corresponding to $n=0$ in the sum on the right
yields \eqref{jxh293nbwes}.\footnote{It seems appropriate to note here that Charles Hermite in 1900 \label{h92h3dbi23}
tried to use a similar method to derive a series with Gregory's coefficients
for $\zeta(s,v)$, but his attempt was not succesfull. 
A careful analysis of his derivations \cite[p.~69]{hermite_01}, \cite[vol.~IV, p.~540]{hermite_02}, reveals that Hermite's errors is due to the 
incorrect expansion of \mbox{$\big(1-e^{-x}\big)^{-1}$} into the series with $\,\omega_n\,$, 
which, in turn, led him to an incorrect formula for $R(a,s)\equiv\zeta(s,a)$.\footnotemark{}
These results were never published during Hermite's lifetime and 
appeared only in epistolary exchanges with the Italian mathematician Salvatore Pincherle, who published them in \cite{hermite_01}
several months after Hermite's death. Later, these letters were reprinted in \cite{hermite_02}.}
\footnotetext{On p.~69 in \cite{hermite_01} and p.~540 in \cite[vol.~IV]{hermite_02} in the expansion for \mbox{$\big(1-e^{-x}\big)^{-1}$} 
the term $\omega_1$ should be replaced by $\omega_2$ and $\omega_n$ by $\omega_{n+1}$. 
Note that Hermite's $\omega_n=|G_n|$.} \qed\\

\emph{First variant of proof of \eqref{jx20nsw}: }
Integrating term--by--term the right--hand side of \eqref{8934dh2nd4} over $\,x\in[-1,0]\,$ and remarking that 
\be
\frac{1}{n!}\!\int\limits_{-1}^0\! (x-n+1)_n\, dx\,=
\int\limits_{-1}^0\!\! \binom{x}{n} \, dx \,=\,
\int\limits_{0}^1\!\! \binom{x-1}{n} \, dx \,=\,
\,(-1)^nC_n\,,
\ee
we have
\begin{eqnarray}
\displaystyle \zeta(s,v)\! && =\, \frac{1}{s-1}\left\{\sum_{n=0}^\infty (v-1+n)^{1-s} - \sum_{n=0}^\infty (v+n)^{1-s} \right\} 
+ \sum_{n=1}^\infty (-1)^n C_n\, \Delta^{n-1} v^{-s} \notag\\[3mm]
&&\displaystyle =\, \frac{(v-1)^{1-s}}{s-1}  - \sum_{n=0}^\infty C_{n+1}\, (-1)^n\Delta^{n} v^{-s} \,,
\end{eqnarray}
which coincides with \eqref{jx20nsw} by virtue of \eqref{98yrbfe45}. \qed

\emph{Second variant of proof of \eqref{jx20nsw}: }
In order to obtain \eqref{jx20nsw}, we also may proceed analogously to the demonstration of Theorem \ref{h293hs},
in which we replace \eqref{c289hdnwe3} by \eqref{jxh293nbwes}. The unity appearing from
Fontana's series in \eqref{jdf9243dn2} becomes $v^{-s}$ and the term $(k+2)^{-s}$ 
becomes $(k+1+v)^{-s}$, that is to say
\begin{eqnarray}\label{gh65rf6}
\zeta(s,v)\, =\,\frac{v^{1-s}}{\,s-1\,}+v^{-s}-\sum_{n=0}^\infty C_{n+1}
\sum_{k=0}^{n} (-1)^k \binom{n}{k}(k+v+1)^{-s}\,,
\end{eqnarray}
$\Re v > 0.$ Using the recurrence relation $\,\zeta(s,v)=\zeta(s,v+1)+v^{-s}\,$ and rewriting the final result 
for $v$ instead of $v+1$, we immediately obtain \eqref{jx20nsw}. \qed\\

\emph{Proof of \eqref{98y6b796}: }
Our method of proof, which uses the Gregory--Newton interpolation formula, 
may be further generalized.
By introducing polynomials $N_{n,m}(a)$ accordingly to \eqref{oihf93h4}, and
by integrating \eqref{8934dh2nd4} over $\,x\in[a,a+m]\,$, we have
\be\label{8923dhn3}
\zeta(s,v) =\, \frac{1}{\,m\,(s-1)\,}\Big\{\zeta(s-1,v+a) - \zeta(s-1,v+a+m) \Big\} 
+  \frac{1}{m}\!\sum_{n=1}^\infty N_{n,m}(a)\, \Delta^{n-1} v^{-s} 
\ee
Simplifying the expression in curly brackets and reindexing the latter sum immediately yields
\be\label{0934uf023jhd}
\zeta(s,v) =\, \frac{1}{\,m\,(s-1)\,}\sum_{n=0}^{m-1} (v+a+n)^{1-s}  
+  \frac{1}{m}\!\sum_{n=0}^\infty N_{n+1,m}(a)\, \Delta^{n} v^{-s} ,
\ee
which is identical with \eqref{98y6b796}. Note that expansions \eqref{jxh293nbwes}--\eqref{jx20nsw}
are both particular cases of \eqref{98y6b796} at $m=1$. Formula \eqref{jxh293nbwes} is obtained by setting $a=0$, 
while \eqref{jx20nsw} corresponds to $a=-1$.
\end{proof}

\begin{corollary}
The Euler--Riemann $\zeta$-function admits the following general expansions
\be\label{u928y43dh23}
\zeta(s) =\, \frac{1}{\,m\,(s-1)\,}\sum_{n=1}^{m} (a+n)^{1-s}  
+  \frac{1}{m}\!\sum_{n=0}^\infty (-1)^n N_{n+1,m}(a)
\sum_{k=0}^{n} (-1)^k \binom{n}{k} (k+1)^{-s}
\ee
$a>-1$, $m\in\mathbbm{N}$, and 
\be\label{u928y43dh23b}
\zeta(s) =\,1\, + \frac{1}{\,m\,(s-1)\,}\sum_{n=1}^{m} (a+1+n)^{1-s}  
+  \frac{1}{m}\!\sum_{n=0}^\infty (-1)^n N_{n+1,m}(a)
\sum_{k=0}^{n} (-1)^k \binom{n}{k} (k+2)^{-s}
\ee
$a>-2$, $m\in\mathbbm{N}$, containing finite differences $\Delta^n 1^{-s}$ and 
$\Delta^n 2^{-s}$ respectively.
Ser's series \eqref{c289hdnwe3} and our series from Theorem \ref{h293hs} are particular cases
of the above expansions.\footnote{We have Ser's formula when putting $a=0$, $m=1$ in \eqref{u928y43dh23}, 
and our Theorem \ref{h293hs} if setting $a=-1$, $m=1$ in \eqref{u928y43dh23b}.}
\end{corollary}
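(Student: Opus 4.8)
The plan is to obtain both \eqref{u928y43dh23} and \eqref{u928y43dh23b} as mere specializations of the master formula \eqref{98y6b796} of Theorem \ref{ij298034hd}, using the two elementary identities $\zeta(s)=\zeta(s,1)$ and $\zeta(s)=1+\zeta(s,2)$, the latter because $\zeta(s,2)=\sum_{n\geqslant0}(n+2)^{-s}=\zeta(s)-1$. Since the polynomial weight $(-1)^nN_{n+1,m}(a)$ in \eqref{98y6b796} does not depend on $v$, the whole effect of choosing $v$ is to fix which finite difference $\Delta^n v^{-s}$ appears and to shift the argument of the closed-form first sum; reindexing that finite sum is then the only algebraic step.

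To establish \eqref{u928y43dh23} I would set $v=1$ in \eqref{98y6b796}. The inner difference sum becomes $\sum_{k}(-1)^k\binom{n}{k}(k+1)^{-s}$, i.e. it carries $\Delta^n1^{-s}$, and the reindexing $n\mapsto n-1$ turns $\sum_{n=0}^{m-1}(1+a+n)^{1-s}$ into $\sum_{n=1}^{m}(a+n)^{1-s}$, which is exactly the leading term of \eqref{u928y43dh23}; the hypothesis $\Re v>-\Re a$ with $v=1$ gives the stated $a>-1$. For \eqref{u928y43dh23b} I would set $v=2$ and invoke $\zeta(s)=1+\zeta(s,2)$: the inner sum now carries $(k+2)^{-s}$, i.e. $\Delta^n2^{-s}$, the reindexing $\sum_{n=0}^{m-1}(2+a+n)^{1-s}=\sum_{n=1}^{m}(a+1+n)^{1-s}$ produces the leading term, and the additive constant $1$ from the relation supplies the ``$1+$''.

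The one delicate point, which I expect to be the main obstacle, is the claimed range $a>-2$ for \eqref{u928y43dh23b}, since this lies below the blanket restriction $\Re a\geqslant-1$ imposed in Theorem \ref{ij298034hd}. I would resolve it not by quoting \eqref{98y6b796} as a black box but by returning to its construction: formula \eqref{8934dh2nd4} is integrated over $x\in[a,a+m]$, and the only constraint the telescoping first sum $\sum_{n\geqslant0}(x+v+n)^{-s}$ imposes is $\Re(x+v)>0$ on the whole interval, i.e. $\Re(a+v)>0$. For the specific value $v=2$ this reads $\Re a>-2$, so the conservative restriction $\Re a\geqslant-1$ is not binding here and the range genuinely extends to $a>-2$ (and, with $v=1$, to $a>-1$).

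Finally I would verify the recovery statements announced in the footnote. Putting $a=0,\,m=1$ in \eqref{u928y43dh23} and using $N_{n+1,1}(0)=\int_0^1\binom{x}{n+1}\,dx=G_{n+1}=(-1)^n|G_{n+1}|$ from \eqref{ldhd9ehn} collapses the weight $(-1)^nN_{n+1,1}(0)$ to $|G_{n+1}|$, reproducing Ser's series \eqref{c289hdnwe3}; putting $a=-1,\,m=1$ in \eqref{u928y43dh23b} and using $N_{n+1,1}(-1)=\int_{-1}^0\binom{x}{n+1}\,dx=(-1)^{n+1}C_{n+1}$ collapses the weight to $-C_{n+1}$ and, together with the leading term $\tfrac{1}{s-1}$ and the constant $1$, reproduces the series \eqref{jdf9243dn2} of Theorem \ref{h293hs}.
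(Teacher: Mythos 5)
Your proposal is correct and follows essentially the same route as the paper: both formul\ae\ are obtained by specializing the master formula \eqref{98y6b796}/\eqref{0934uf023jhd} at $v=1$ and at $v=2$ (the paper phrases the latter as substituting $v+1$ for $v$, invoking $\zeta(s,v+1)=\zeta(s,v)-v^{-s}$, and then setting $v=1$, which is the same thing). Your extra remarks --- tracing the range $a>-2$ back to the condition $\Re(a+v)>0$ in the derivation of \eqref{8934dh2nd4}, and checking the footnote's specializations via $N_{n+1,1}(0)=G_{n+1}$ and $N_{n+1,1}(-1)=(-1)^{n+1}C_{n+1}$ --- are consistent with the paper and slightly more explicit than its one-line proof.
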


\begin{proof}
On the one hand, setting $v=1$ in \eqref{0934uf023jhd} we immediately obtain \eqref{u928y43dh23}.
On the other hand, putting $v+1$ instead of $v$ and using the relation $\,\zeta(s,v+1)\,=\,\zeta(s,v) - v^{-s}$, 
equality \eqref{0934uf023jhd} takes the form
\be
\zeta(s,v) =\, v^{-s}+ \frac{1}{\,m\,(s-1)\,}\sum_{n=1}^{m} (v+a+n)^{1-s}  
+  \frac{1}{m}\!\sum_{n=0}^\infty N_{n+1,m}(a)\, \Delta^{n} (v+1)^{-s} .
\ee
At point $v=1$, this equality becomes \eqref{u928y43dh23b}. Continuing the process,
we may also obtain similar formul\ae~for $\zeta(s)$ containing finite differences $\Delta^n 3^{-s}$,
$\Delta^n 4^{-s}$,\ldots 
\end{proof}

\noindent{\bfseries Remark 1, related to the polynomials $\boldsymbol{N_{n,m}(a)}.\,$}\label{9874rhf32d}
Polynomials $N_{n,m}(a)$ generalize many special numbers and have a variety of interesting properties.
First of all, we remark that $N_{n,m}(a)$ are polynomials of degree $n$ in $a$ with rational coefficients.
This may be seen from the fact that
\begin{eqnarray}\label{oihf93h4b}
& \displaystyle N_{n,m}(a)\,\equiv\,\frac{1}{n!}\!\! \int\limits_a^{a+m}\!\! (x-n+1)_n\,  dx\,& \displaystyle =
\,\frac{1}{n!}\sum_{l=1}^n \frac{S_1(n,l)}{l+1}\Big\{(a+m)^{l+1} - a^{l+1} \Big\} \qquad \qquad\\[1mm]
 && \displaystyle =\,\frac{1}{n!}\sum_{l=1}^n \frac{S_1(n,l)}{l+1}\sum_{k=0}^l a^k m^{l+1-k}\binom{l+1}{k} \,,\notag
\end{eqnarray}
where $n$ and $m$ are positive integers.
This formula is quite simple and very handy for the calculation of $ N_{n,m}(a)$ with the help of CAS.
It is therefore clear that for any $a\in\mathbbm{Q}$, polynomials
$N_{n,m}(a)$ are simply rational numbers. Some of such examples may be of special interest
\be\notag
N_{n,1}(-1)\,=\,(-1)^n C_n\,,\qquad 
N_{n,1}(0)\,=\, G_n\,,\qquad 
N_{2n,1}(n-1)\,=\,M_{2n}\,,\qquad 
\ee
where $M_n$ are central difference coefficients $M_2=\nicefrac{-1}{12}\,$,
\mbox{$M_4=\nicefrac{+11}{720}\,$}, $M_6=\nicefrac{-191}{60\,480}\,$, $M_8=\nicefrac{+2497}{3\,628\,800}\,$,
\,\ldots\,, see e.g.~\cite{salzer_01}, \cite[\S~9$\cdot$084]{jeffreys_02}, \cite[p.~186]{milne_01}, OEIS A002195 and A002196.
The derivative of $N_{n,m}(a)$ is 
\be\label{fhqgg6754r}
\frac{\partial  N_{n,m}(a)}{\partial  a} \,=\frac{\,(a+m-n+1)_n - (a-n+1)_n}{n!}
\,=\binom{a+m}{n} - \binom{a}{n} \,.
\ee
Polynomials $N_{n,m}(a)$ are related to many other special polynomials. This can be readily seen from the generating
equation for $N_{n,m}(a)$, which we gave in \eqref{90823u4h32} without the proof. Let us now prove it.
On the one hand, on integrating $\,(1+z)^x\,$ between $x=a$ and $x=a+m$ we undoubtedly have
\be
\int\limits_a^{a+m}\!\!\! (1+z)^x\,  dx\,=\,\frac{(1+z)^{a+m}-(1+z)^{a}}{\ln(1+z)}\,.
\ee
On the other hand, the same integral may be calculated by expanding $\,(1+z)^x\,$ into the binomial series
\be
\int\limits_a^{a+m}\!\!\! (1+z)^x\,  dx\,= 
\sum_{n=0}^\infty z^n \!\!\!\! \int\limits_a^{a+m}\!\!\! \binom{x}{n}\,  dx\, =
\sum_{n=0}^\infty  z^n N_{n,m}(a)
\ee
by virtue of the uniform convergence.
Equating both expressions yields \eqref{90823u4h32}.
There is also a more direct way to prove the same result and, in addition, to explicitly determine $N_{0,m}(a)$.
Using \eqref{oihf93h4b} and accounting for the 
absolute convergence, we have for the right part of \eqref{90823u4h32}
\begin{eqnarray}
&\displaystyle\sum_{n=1}^\infty N_{n,m}(a) \, z^n\,=\,\sum_{l=1}^\infty \frac{(a+m)^{l+1} - a^{l+1}}{l+1}
\underbrace{\sum_{n=1}^\infty \frac{S_1(n,l)}{n!}\, z^n }_{\ln^l(1+z)/l!} \,=\,\frac{1}{\ln(1+z)}\cdot   \notag\\[-4mm]
&\displaystyle\label{8u4rfhq2me}\\
&\displaystyle  \cdot\left\{\sum_{l=2}^\infty \frac{\big[(a+m)\ln(1+z)\big]^l}{l!}
- \sum_{l=2}^\infty \frac{\big[a\ln(1+z)\big]^l}{l!}  \right\}= 
\,\frac{(1+z)^{a+m}-(1+z)^{a}}{\ln(1+z)}\,-\,m \,,   \notag
\end{eqnarray}
where we used the generating equation for the Stirling numbers of the first kind,
see e.g.~\cite[p.~408]{iaroslav_08}, \cite[p.~369]{iaroslav_09}. Hence, $N_{0,m}(a)=m$. 
Polynomials $N_{n,m}(a)$ are, therefore,  close
to the Stirling polynomials, to Van Veen's polynomials $K_n^{(z)}$ \cite{van_veen_01},
to various generalizations of the Bernoulli numbers/polynomials,
including the so--called \emph{N{\o}rlund polynomials} \cite[p.~602]{nemes_03}, which are also known 
as the generalized Bernoulli polynomials of the second kind \cite[p.~324, Eq.~(2.1)]{carlitz_01}, and to many other special polynomials,
see e.g.~\cite[Vol.~III, \S~19]{bateman_01}, \cite[Ch.~VI]{milne_01}, \cite[Vol.~I, \S~2.8]{luke_01},
\cite{carlitz_02}, \cite{norlund_02}, \cite{norlund_01}, \cite{gould_01},
\cite{gould_02}, \cite{young_02} \cite{brychkov_01}, \cite{brychkov_02}, \cite{roman_01},  \cite{kowalenko_01},
\cite{rubinstein_01}.
The most close connection seems to exist 
with the \emph{Bernoulli polynomials of the second kind}, also known as the \emph{Fontana--Bessel polynomials}, 
which are denoted by $\psi_n(x)$ by Jordan \cite{jordan_02}, 
\cite[Ch.~5]{jordan_01}, \cite[p.~324]{carlitz_01},\footnote{These
polynomials and/or those equivalent or closely related to them, were rediscovered in numerous works and by numerous authors
(compare, for example, works \cite[p.~1916]{merlini_01}, \cite[p.~3998]{young_03}, \cite[\S~5.3.2]{roman_01}, or
compare the Fontana--Bessel polynomials from \cite{appel_03}\footnotemark with 
$\psi_n(x)$ introduced by Jordan in the above--cited works and also with polynomials $P_{n+1}(y)$ employed by Coffey in \cite[p.~450]{coffey_02}), 
so we give here
only the most frequent notations and definitions for them.}
\footnotetext{Apparently in \cite{appel_03} Appell refers to Bessel's work \cite{bessel_01} of 1811, which is also indirectly mentioned 
by Vacca in \cite{vacca_02}. 
In \cite{bessel_01}, we find some elements related to the polynomials $\psi_{n}(x)$, as well as to  $N_{n,m}(a)$,
but Bessel did not perform their systematical study.
In contrast, he studied quite a lot a particular case of them leading  
to Gregory's coefficients $G_n=\psi_n(0)$, $A^{0}, A', A'',\ldots$ in Bessel's notations, see
\cite[pp.~10--11]{bessel_01}, \cite[p.~207]{vacca_02}. In this context, it may be interesting to remark
that Ser \cite{ser_02}, \cite{appel_04}, investigated polynomials $\psi_n(x)$ more in details [he denoted them $P_{n+1}(y)$], and this before Jordan, and also
calculated several series with $G_n$.}
and have the following generation function
\be\label{oi09u03dj}
\frac{\,z\,(z+1)^x}{\ln(z+1)}\,=\sum_{n=0}^\infty \psi_n(x) \, z^n\,,\qquad |z|<1\,,
\ee
see e.g.~\cite[Ch.~5, p.~279, Eq.~(8)]{jordan_01}, \cite[p.~324, Eq.~(1.11)]{carlitz_01},
and with the Bernoulli polynomials of higher order, usually denoted by $B_n^{(s)}(x)$, and defined via 
\be\label{j023jd3ndu}
\frac{\,z^s\,e^{xz}}{(e^z-1)^s}\,=\sum_{n=0}^\infty \frac{B_n^{(s)}(x)}{n!} \, z^n\,,\qquad |z|<2\pi\,,
\ee
see e.g.~\cite[pp.~127--135]{milne_01}, \cite[p.~323, Eq.~(1.4)]{carlitz_01}, \cite{norlund_03}, \cite[p.~145]{norlund_02},
\cite{norlund_01}, \cite[Vol.~III, \S~19.7]{bateman_01}.
Indeed, using formula \eqref{oi09u03dj}, we have for the left part of \eqref{90823u4h32}
\begin{eqnarray}
\,\frac{(1+z)^{a+m}-(1+z)^{a}}{\ln(1+z)}&&
=\,\sum_{n=0}^\infty \big\{\psi_n(a+m)- \psi_n(a)\big\}\, z^{n-1} \notag\\
&& = \,\sum_{n=0}^\infty \big\{\psi_{n+1}(a+m)- \psi_{n+1}(a)\big\} \,z^{n}  \notag
\end{eqnarray}
since $\,\psi_0(x)=1\,$.\footnotemark[25]~Comparing the latter expression to the right
part of  \eqref{90823u4h32} immediately yields
\be\label{89rh43ujfb4}
N_{n,m}(a)\,=\,\psi_{n+1}(a+m) - \psi_{n+1}(a)\,=\sum_{k=0}^{m-1} \psi_{n}(a+k) \,,\qquad n\in\mathbbm{N}_0\,,
\ee
since $\,\Delta\psi_{n+1}(x) =\psi_{n}(x)\,$, see e.g.~\cite[pp.~265, 268]{jordan_01}.
In particular, for $m=1$ we simply have
\be\label{897tv87tc}
N_{n,1}(a)\,=\,\psi_{n}(a)\,.
\ee
Another way to prove \eqref{89rh43ujfb4} is to recall that $\,\binom{x}{n}\,dx=d\psi_{n+1}(x)\,$, see e.g.~\cite[p.~130]{jordan_02},
\cite[p.~265]{jordan_01}.
Hence, the antiderivative of the falling factorial is, up to a function of $n$,
precisely the function $\psi_{n+1}(x)$.
By virtue of this important property, 
formula \eqref{89rh43ujfb4} follows immediately
from our definition of $N_{n,m}(a)$ given in the statement of the theorem. 
Furthermore, from \eqref{j023jd3ndu} it follows that $\,B_n^{(n)}(x+1)\,=\,n!\,\psi_n(x)\,$, see e.g.~\cite[p.~135]{milne_01},
 \cite[Eq.~(2.1) \& (2.11)]{carlitz_01}, \cite[p.~147]{norlund_02}, whence
\be
N_{n,m}(a)\,=\,\frac{1}{(n+1)!}\left\{ B_{n+1}^{(n+1)}(a+m+1) -  B_{n+1}^{(n+1)}(a+1)\right\}\,,\qquad n\in\mathbbm{N}_0\,.
\ee
This clearly displays a close connection between $N_{n,m}(a)$ and the Bernoulli polynomials of both varieties. 
The latter have been the object of much research by N{\o}rlund\footnote{Also written N\"orlund.}
\cite{norlund_03}, \cite{norlund_02}, \cite{norlund_01},
\cite[Ch.~VI]{milne_01}, Jordan \cite{jordan_02}, \cite[Ch.~5]{jordan_01}, Carlitz \cite{carlitz_01} and some other authors.
The polynomials $N_{n,m}(a)$ may also be given by the following integrals
\begin{eqnarray} 
& \displaystyle 
N_{n,m}(a) &=\,\frac{(-1)^{n}}{\pi} 
\!\int\limits_0^\infty \! \frac{\,\pi \cos\pi a - \sin\pi a \ln x\,}{\,(1+x)^{n+1}\,} 
\cdot\frac{\,\big[(-x)^m-1\big]\, x^a \,}{\,\ln^2 x +\pi^2\,}\,dx   \label{fmiuhgd94} \\[2mm] 
&& \displaystyle\notag
=\,\frac{(-1)^{n}}{\pi} 
\!\int\limits_0^\infty \! \frac{\,\pi \cos\pi a + \sin\pi a \ln x\,}{\,(1+x)^{n+1}\,} 
\cdot\frac{\,\big[(-1)^m-x^m\big]\, x^{n-m-a-1} \,}{\,\ln^2 x +\pi^2\,}\,dx\,,
\end{eqnarray} 
provided that $m$ and $n$ are positive integers, and $-1\leqslant a\leqslant n-1$ and $-1-a\leqslant m\leqslant n-a-1$.
These representations follow from \eqref{89rh43ujfb4} and this formula
\be\label{fmiuhgd94}
\psi_{n}(x)\,=\,\frac{(-1)^{n+1}}{\pi} 
\!\int\limits_0^\infty \! \frac{\,\pi \cos\pi x - \sin\pi x \ln z\,}{\,(1+z)^n\,} 
\cdot\frac{z^x \, dz}{\,\ln^2 z +\pi^2\,}
\,,\quad  -1\leqslant x\leqslant n-1\,,\quad
\ee
whose proof we put in the Appendix.
At this stage, it may be useful to provide explicit expressions for the first few polynomials $N_{n,m}(a)$
\begin{eqnarray}
&&\hspace{-1.5mm} N_{0,m}(a)\,=\,m   \notag\\[1mm]
&&\hspace{-1.5mm} N_{1,m}(a)\,=\,ma+ \tfrac12 m^2   \label{uyh293dd}\\[1mm]
&&\hspace{-1.5mm} N_{2,m}(a)\,=\,\tfrac12 ma^2 + \tfrac12 am^2 - \tfrac12 am + \tfrac16 m^3 - \tfrac14 m^2  \notag \\[1mm]
&&\hspace{-1.5mm} N_{3,m}(a)\,=\,\tfrac13 am + \tfrac16 m^2 - \tfrac12 a^2m - \tfrac12 am^2 -
\tfrac 16 m^3 + \tfrac16 a^3m + \tfrac14 a^2m^2 + \tfrac16 am^3 + \tfrac{1}{24}m^4    \notag
\end{eqnarray}
and so on. They may be obtained either directly from \eqref{oihf93h4b}, or from the similar formul\ae~for the
polynomials $\psi_n(x)$
\be\label{d230i9jf}
\psi_{n}(x)\, =
\,\frac{1}{(n-1)!}\sum_{l=0}^{n-1} \frac{S_1(n-1,l)}{l+1}\, x^{l+1} \,+\, G_{n}\,,\qquad n\in\mathbbm{N},\quad\footnotemark
\ee
where $\,S_1(0,0)=1\,$, and from \eqref{89rh43ujfb4}.\footnotetext{In particular, \label{kic2ju09hrrf}
$\,\psi_1(x)=x+\frac12\,$, $\,\psi_2(x)=\frac12x^2-\frac{1}{12}\,$, 
$\,\psi_3(x)=\frac16x^3-\frac14x^2+\frac{1}{24}\,$,  
$\,\psi_4(x)=\frac{1}{24}x^4-\frac16x^3+\frac16x^2 -\frac{19}{720}\,$, etc. 
see e.g.~\cite[p.~272]{jordan_01}. The value $\,\psi_0(x)=1\,$ cannot be computed with the help of this formula
and directly follows from the generating equation \eqref{oi09u03dj} (the limit of the left part when $z\to0$ tends
to $1$ independently of $x$). Note also that $\,\psi_n(0)=G_n\,$, except for $n=0$. Formula \eqref{d230i9jf} may also be found in \cite[p.~267]{jordan_01} with a little corection 
to take into account: the upper bound of the sum in (6) should read $n$ instead of $n+1$.}
Finally, we remark that the complete asymptotics of the polynomials $N_{n,m}(a)$ at large $n$ are given by
\be\label{lkjc023nf}
N_{n,m}(a)\,\sim\,\frac{(-1)^{n+1}}{\,\pi \, n^{a+1}}
\sum_{l=0}^{\infty} \frac{1}{\ln^{l+1}n}\cdot
\Big[\sin\pi a\cdot\Gamma(a+1)\Big]^{(l)}
\,,\qquad 
\begin{array}{l}
n\to\infty \\[1mm]
\Re a \geqslant -1\,
\end{array}\,,
\ee
where $^{(l)}$ stands for the $l$th derivative, and $m$ is a finite natural number.
In particular, retaining first two terms, we have
\begin{eqnarray}
\displaystyle N_{n,m}(a)\,\sim && \displaystyle \frac{(-1)^{n+1}}{\pi\,n^{a+1}\ln n}
\cdot\left\{\sin\pi a \cdot\Gamma(a+1) \,+ 
\vphantom{\frac{\pi\cos\pi a \cdot \Gamma(a+1) + \sin\pi a \cdot \Gamma(a+1) \cdot\Psi(a+1)}{\ln n}}\label{809q0g08g}
\right.\\[1mm]
&&\notag\displaystyle\qquad\qquad\qquad\quad
\left.+ \,\frac{\pi\cos\pi a \cdot \Gamma(a+1) + \sin\pi a \cdot \Gamma(a+1) \cdot\Psi(a+1)}{\ln n}\right\}
\end{eqnarray}
at $n\to\infty$.
Both results can be obtained without difficulty from the complete asymptotics of $\,B_{n}^{(n)}(x)\,$
given by N{\o}rlund \cite[p.~38]{norlund_01}.
Note that if $a\in\mathbbm{N}_0$, the first term of asymptotics \eqref{lkjc023nf}--\eqref{809q0g08g} vanishes,
and thus $N_{n,m}(a)$ decreases slightly faster. Remark also that making $a\to0$ and $a\to-1$ 
in \eqref{lkjc023nf}--\eqref{809q0g08g}, we find the asymptotics
of numbers $G_n$ and $C_n$ respectively.\footnote{In \cite[p.~414, Eq.~(51)]{iaroslav_08}, 
we obtained the complete asymptotics for $\,C_n=C_{2,n}/n!=\big|B^{(n)}_n\big|/n!\,$ at large $n$. However, 
it seems appropriate to notice that the equivalent result may be straightforwardly derived from 
N{\o}rlund's asymptotics of $\,B_{n}^{(n)}(x)\,$, since $B_{n}^{(n)}(0)=B^{(n)}_n$, see \cite[pp.~27, 38, 40]{norlund_01}.}

\begin{theorem}\label{45yvgt4rf45}
For any positive integer $k>1$, the $\zeta$-functions $\zeta(s,v)$ and $\zeta(s)$ obey the following functional 
relationships 
\begin{eqnarray}
&\displaystyle\zeta(s,v) \,
&\displaystyle= \,-\!\sum_{l=1}^{k-1} \! \frac{\,(k-l+1)_l\,}{(s-l)_l}\cdot\zeta(s-l,v)  \,+ 
\sum_{l=1}^{k} \frac{\,(k-l+1)_l\,}{(s-l)_l} \cdot v^{l-s}  \,+\,  \label{984rhc44}\\[2mm]
&&\displaystyle\qquad
 +  \,k\! \sum_{n=0}^\infty (-1)^n G_{n+1}^{(k)}\sum_{k=0}^{n} (-1)^k \binom{n}{k} (k+v)^{-s} \,,\notag
\end{eqnarray}
and
\begin{eqnarray}\notag
\zeta(s) \,=\,-\!\sum_{l=1}^{k-1} \! \frac{\,(k-l+1)_l\,}{(s-l)_l} \zeta(s-l)  \,+ 
\,\frac{k}{s-k} \,+\,k\! \sum_{n=0}^\infty (-1)^n G_{n+1}^{(k)}\!\sum_{k=0}^{n} 
(-1)^k \binom{n}{k} (k+1)^{-s}\qquad\\
\notag
\end{eqnarray}
respectively, where $ G_n^{(k)}$ are \emph{Gregory's coefficients of higher order} defined as
\be\label{utrdswf9}
G_n^{(k)}\,\equiv\frac{1}{\,n!\,}\!\sum_{l=1}^n\frac{S_1(n,l)}{l+k}\,.
\ee
In fact, we come to prove a more general result, of which the above formul\ae~are two particular cases. 
\end{theorem}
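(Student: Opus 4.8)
The plan is to derive both identities (and their common generalization) from the Gregory--Newton expansion \eqref{8934dh2nd4} by a single \emph{weighted} integration. Concretely, I would multiply \eqref{8934dh2nd4} by $k\,x^{k-1}$ and integrate termwise over $x\in[0,1]$. Since $\int_0^1 k\,x^{k-1}\,dx=1$ and the left-hand side is independent of $x$, the integration simply reproduces $\zeta(s,v)$ on the left, while the right-hand side splits into a ``main part'' coming from $\sum_{n\geqslant 0}(x+v+n)^{-s}=\zeta(s,x+v)$ and an ``interpolation part'' coming from $\sum_{n\geqslant1}\frac{(x-n+1)_n}{n!}\,\Delta^{n-1}v^{-s}$.

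The interpolation part is immediate and produces exactly the $G_{n+1}^{(k)}$--series. Indeed, by \eqref{09i4rjo2ende3} one has $\frac{(x-n+1)_n}{n!}=\binom{x}{n}=\frac{1}{n!}\sum_{l=1}^n S_1(n,l)\,x^l$, whence $\int_0^1 x^{k-1}\binom{x}{n}\,dx=\frac{1}{n!}\sum_{l=1}^n\frac{S_1(n,l)}{l+k}=G_n^{(k)}$, which is precisely the definition \eqref{utrdswf9}. Thus the interpolation part equals $k\sum_{n\geqslant1}G_n^{(k)}\,\Delta^{n-1}v^{-s}$; reindexing $n\mapsto n+1$ gives $k\sum_{n=0}^\infty G_{n+1}^{(k)}\,\Delta^{n}v^{-s}$, which by \eqref{98yrbfe45} is the last line of \eqref{984rhc44}.

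The substance of the proof is therefore the evaluation of the main part $k\int_0^1 x^{k-1}\zeta(s,x+v)\,dx$, and this is the step I expect to be the principal obstacle. I would treat it by repeated integration by parts, using that an antiderivative of $\zeta(s,x+v)$ in $x$ is $-\frac{1}{s-1}\zeta(s-1,x+v)$ (justified for $\Re s$ large, then extended by analytic continuation). Writing $J_p:=\int_0^1 x^{k-1-p}\zeta(s-p,x+v)\,dx$, one integration by parts yields the recurrence $J_p=\frac{1}{s-p-1}\bigl[v^{p+1-s}-\zeta(s-p-1,v)\bigr]+\frac{k-1-p}{s-p-1}J_{p+1}$ for $0\leqslant p\leqslant k-2$ (the boundary term at $x=0$ vanishing since $x^{k-1-p}=0$ there), together with the terminal value $J_{k-1}=\int_0^1\zeta(s-k+1,x+v)\,dx=\frac{v^{k-s}}{s-k}$; at every step I use $\zeta(\sigma,v+1)=\zeta(\sigma,v)-v^{-\sigma}$. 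Unfolding the recurrence, the products of the factors $\frac{k-1-p}{s-p-1}$ collapse into $\frac{(k-1)!/(k-l)!}{(s-1)(s-2)\cdots(s-l)}$, which after multiplication by $k$ is exactly $\frac{(k-l+1)_l}{(s-l)_l}$. Collecting the $\zeta(s-l,v)$ contributions (which run to $l=k-1$) and the $v^{l-s}$ contributions (the terminal $J_{k-1}$ supplying the solitary $l=k$ term) gives $k\,J_0=-\sum_{l=1}^{k-1}\frac{(k-l+1)_l}{(s-l)_l}\zeta(s-l,v)+\sum_{l=1}^{k}\frac{(k-l+1)_l}{(s-l)_l}v^{l-s}$, i.e.\ the first two sums of \eqref{984rhc44}. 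Matching the accumulated rational prefactors with the rising factorials $(k-l+1)_l$ and $(s-l)_l$ is the only genuinely delicate bookkeeping.

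Finally, the Riemann case follows by setting $v=1$: then $\zeta(s-l,1)=\zeta(s-l)$ and $v^{l-s}=1$, so it remains to prove the collapse $\sum_{l=1}^{k}\frac{(k-l+1)_l}{(s-l)_l}=\frac{k}{s-k}$. Writing the ratio of rising factorials as $\frac{k!\,\Gamma(s-l)}{(k-l)!\,\Gamma(s)}$ and substituting $j=k-l$, this reduces to the hockey-stick summation $\sum_{j=0}^{k-1}\binom{s-k+j-1}{j}=\binom{s-1}{k-1}$, which yields $\frac{k}{s-k}$ at once; this replaces the second sum by the single term $\frac{k}{s-k}$ and gives the stated formula for $\zeta(s)$. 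Global convergence of all the series is inherited from that of \eqref{jxh293nbwes}, since $G_n^{(k)}$ decays like $G_n$ up to a power of $\ln n$ while the inner finite differences $\Delta^{n}v^{-s}$ behave exactly as before. The same weighted-integration scheme, carried out over a general interval in place of $[0,1]$, produces the more general master formula announced at the end of the statement, the two displayed identities being its specializations.
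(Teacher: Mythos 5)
Your proposal is correct and follows essentially the same route as the paper: the paper likewise integrates the Gregory--Newton expansion \eqref{8934dh2nd4} against the normalized weight $\rho(x)=k\,x^{k-1}$ over $[0,1]$, identifies $\int_0^1 x^{k-1}\binom{x}{n}\,dx=G_n^{(k)}$ for the interpolation part, and evaluates the main part by repeated integration by parts (the paper does this termwise on $\int x^{k-1}(v+x+n)^{-s}\,dx$ before summing over $n$, whereas you sum first to get $\zeta(s,x+v)$ and then integrate by parts --- an immaterial difference). Your explicit hockey-stick verification of $\sum_{l=1}^{k}\frac{(k-l+1)_l}{(s-l)_l}=\frac{k}{s-k}$ supplies a detail the paper leaves as ``simplify the second sum.''
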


\begin{proof}
Let $\,\rho(x)\,$ be the normalized weight such that 
\be\notag
\int\limits_{a}^{a+m} \!\!\! \rho(x)\, dx\,=\,1\,, \quad
\text{and let denote}\quad
N_{n,m}^{(\rho)}(a)\,\equiv\,\frac{1}{\,n!\,}\!\!\!\int\limits_{a}^{a+m} \!\!  (x-n+1)_n\,\rho(x)\, dx\,.
\ee
Performing the same procedure as in the case of \eqref{98y6b796} and assuming the uniform convergence, we obtain
\be\label{j09384jhfd}
\zeta(s,v) \,=\sum_{n=0}^\infty \!\underbrace{\int\limits_{a}^{a+m}\!\!\!\frac{\rho(x)}{(v+x+n)^s}\, dx}_{ F_{n,m,a}[\rho(x)]}\,
+  \sum_{n=1}^\infty N_{n,m}^{(\rho)}(a)\, \Delta^{n-1} v^{-s} \,.
\ee
Albeit this generalization appears rather theoretical, it, however, may be useful if 
the functional $F_{n,m,a}[\rho(x)]$ admits a suitable closed--form and if the series $\sum F_{n,m,a}[\rho(x)]$ converges.
Thus, if we simply put $\rho(x)=1/m$, then we retrieve our formula \eqref{98y6b796}. If we put $\,\rho(x)=k\,x^{k-1}$,
where $k\in\mathbbm{N}$, and set $a=0$, $m=1$, then it is not difficult to see that
\be\label{9832h3n4}
N_{n,1}^{(kx^{k-1})}(0)\,=\, \frac{k}{\,n!\,}\!\sum_{l=1}^n\frac{S_1(n,l)}{l+k}\,\equiv \,k\, G_n^{(k)}\,,
\ee
Now, remarking that the repeated integration by parts yields
\begin{eqnarray}
&\displaystyle\int \! x^{k-1} (v+x+n)^{-s}\, dx\,&\displaystyle
=\,\frac{1}{\,k\,} \! \sum_{l=1}^{k-1} \frac{(-1)^{l+1} (v+x+n)^{l-s} \cdot x^{k-l} \cdot(k-l+1)_l}{(1-s)_l} \, +  \, \notag\\[2mm]
&&\displaystyle\qquad
+ \,\frac{(-1)^{k+1} (v+x+n)^{k-s} \cdot (k-1)! }{(1-s)_k} \,,
\end{eqnarray}
and evaluating the infinite series $\sum F_{n,1,0}[k\,x^{k-1}]$,
formula \eqref{j09384jhfd} reduces to the desired result stated in \eqref{984rhc44}, since $\,(1-s)_l=(-1)^l (s-l)_l$.
Note that we have \eqref{jxh293nbwes} as a particular case of \eqref{984rhc44} at $k=1$. Moreover,
if we put $v=1$ and simplify the second sum in the first line, then we immediately arrive 
at this curious formula for the $\zeta$-function
\be\label{i0u023jddf}
\zeta(s) \,=\,-\!\sum_{l=1}^{k-1} \frac{\,(k-l+1)_l\,}{(s-l)_l} \cdot\zeta(s-l)  \,+ 
\,\frac{k}{s-k} \,+\,k\! \sum_{n=0}^\infty G_{n+1}^{(k)}\Delta^n 1^{-s}\,,\qquad
\ee
which is also announced in Theorem \ref{45yvgt4rf45}. 

It is interesting to remark that formula (48) from \cite[p.~414]{iaroslav_08} also contains similar shifted values of the $\zeta$-functions.\footnote{Formula 
 (48) from \cite{iaroslav_08} and its proof were first released on 5 January 2015 in the 6th arXiv version of the paper. 
28 September 2015, a particular case of the same formula for nonnegative integer $s$ 
was also presented by Xu, Yan and Shi \cite[p.~94, Theorem 2.9]{xu_01}, who, apparently,
were not aware of the arXiv preprint of our work \cite{iaroslav_08} (we have not found the preprint of \cite{xu_01}).} 
Another functional relationship of the same kind was discovered by Allouche et al.~in \cite[Sect.~3, p.~362]{allouche_03}.
\end{proof}

\noindent{\bfseries Nota Bene.}\label{hg8fgvikhvgc6}
The numbers $G_n^{(k)}$ are yet another generalization of Gregory's coefficients $G_n=G_n^{(1)}$,
and we already encountered them in  \cite[pp.~413--414]{iaroslav_08}.
In the latter, we, \emph{inter alia}, showed that $\,\big|G_n^{(k)}\big|\sim n^{-1}\ln^{-k-1}n\,$ at $\,n\to\infty\,$ 
and also evaluated the series $\sum (-1)^n G_n^{(k)}\!/n \,$ for $k=2, 3, 4,\ldots$ \cite[Eqs.~38, 50]{iaroslav_08}.  Besides,
although it is much easier to define $G_n^{(k)}$ via the finte sum with the 
Stirling numbers of the first kind \eqref{utrdswf9}, they equally may be defined via the generating function
\be\label{90u2hl3f}
z\,\frac{\,(-1)^{k+1}(k-1)! \,}{\,\ln^k(1+z)\,} \,
+\, (1+z)\!\sum_{l=1}^{k-1} \frac{(-1)^{l+1}(k-l+1)_{l-1}\,}{\,\ln^l(1+z)\,} 
\,=\,\frac{1}{\,k\,} \,+ \sum_{n=1}^\infty G_n^{(k)} z^n \,,\quad
\ee
where $|z|<1\,$ and $\,k\in\mathbbm{N}\,$.\footnote{For $k=1$
the sum over $l$ should be taken as $0$.}~This
formula may be obtained by proceeding in the manner analogous to \eqref{8u4rfhq2me}
and coincides with \eqref{eq32} at $k=1.$ 
Note that the left part of \eqref{90u2hl3f} 
contains the powers of $\ln^{-1}(1+z)$ up to and including $k$,
the fact which prompted us to call $G_n^{(k)}$ \emph{Gregory's coefficients of higher order}.

\begin{theorem}\label{ujihd97823hdbe}
Let $\,\boldsymbol{u}\equiv\{u_n\}_{n\geqslant0}\,$ be the sequence of 
bounded complex numbers
and let denote \mbox{$\,\Delta u_n=u_{n+1}-u_n\,$}, $\,\Delta^2 u_n=\Delta(\Delta u_n) = u_{n+2} - 2u_{n+1} + u_n\,,$ 
and so on, exactly as in \eqref{0i3u40jfmnr}.
Consider now the following functional 
\be\notag
\zeta(s,v,\boldsymbol{u})=\sum_{n=0}^\infty \frac{u_n}{\, (v+n)^{s}}\,, \qquad 
\begin{array}{l}
\Re s>1\, \\[1mm]
v\in\mathbbm{C}\setminus\!\{0,-1,-2,\ldots\}\,,
\end{array}
\ee
generalizing the Euler--Riemann $\zeta$-function $\zeta(s)=\zeta(s,1,\boldsymbol{1})$, 
the Hurwitz $\zeta$-function $\zeta(s,v)=\zeta(s,v,\boldsymbol{1})$, 
the functional $F(s)$ from \cite[Sect.~3, p.~362]{allouche_03} $F(s)=\zeta(s,1,\boldsymbol{u})$.
The Dirichlet series $\,\zeta(s,v,\boldsymbol{u})\,$ admits two following 
series representations
\begin{eqnarray}
&&\displaystyle \zeta(s,v,\boldsymbol{u})\,=
\frac{1}{\,m\,(s-1)\,}\sum_{n=0}^{m-1} \zeta(s-1,v+a+n+1,\Delta\boldsymbol{u}) \,+
\qquad\notag \\[1mm]
\displaystyle&&\displaystyle\qquad
+ \, \frac{u_0}{\,m\,(s-1)\,} \! \sum_{n=0}^{m-1}  (v+a+n)^{1-s}
- \, \frac{1}{\,m\,}\!\sum_{n=1}^\infty (-1)^n N_{n,m}(a)\, \zeta(s,v+n, \Delta^n \boldsymbol{u})\, +\notag\\[1mm]
\displaystyle&&\displaystyle\qquad
+ \,\frac{1}{\,m\,}\!\sum_{n=0}^\infty  (-1)^{n}N_{n+1,m}(a)\sum_{l=0}^{n} \Delta^l u_0 
\sum_{k=0}^{n-l} (-1)^k\binom{n-l}{k} (k+l+v)^{-s}  \label{uygf8fgffcdy6} \\[1mm]\notag
\end{eqnarray}
and 
\begin{eqnarray}
\zeta(s,v,\boldsymbol{u})&&\displaystyle \! =\,
\frac{1}{\,m\,(s-1)\,}\!\sum_{n=0}^{m-1}\!\Big\{ \zeta(s-1,v+a+n+1,\Delta\boldsymbol{u}) \,+\, u_0(v+a+n)^{1-s}\Big\} + \notag\\[1mm]
\displaystyle&&\displaystyle\qquad
+\, \frac{1}{m}\!\sum_{n=1}^\infty N_{n,m}(a)\, \Delta^{n-1}\zeta(s,v+1, \Delta \boldsymbol{u}) \,+ \notag\\[1mm]
\displaystyle&&\displaystyle\qquad 
+ \,\frac{u_0}{m}\!\sum_{n=0}^\infty (-1)^n N_{n+1,m}(a) \!\sum_{k=0}^{n} (-1)^k \binom{n}{k} (k+v)^{-s}   \,,\qquad\label{ft76rfvcvc}
\end{eqnarray}
where $\,\Delta^n\boldsymbol{u}\equiv\{\Delta^n u_l\}_{l\geqslant0}\,$ and the operator $\,\Delta^0\,$ does nothing by convention.
\end{theorem}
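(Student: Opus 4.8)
The plan is to mirror the proof of \eqref{98y6b796} line for line, simply carrying the sequence $\boldsymbol{u}$ through every step. First I would expand $g(x)\equiv\zeta(s,v+x,\boldsymbol{u})$ in its Gregory--Newton interpolation series in $x$ about $x=0$, exactly as in \eqref{kj3094fn3er}, and isolate the value at the origin,
\be\notag
\zeta(s,v,\boldsymbol{u})\,=\,\zeta(s,v+x,\boldsymbol{u})-\sum_{n=1}^\infty\binom{x}{n}\,\Delta_x^n\zeta(s,v,\boldsymbol{u})\,,
\ee
where $\Delta_x$ is the forward difference in the continuous variable $x$ and $\Delta_x^n\zeta(s,v,\boldsymbol{u})$ abbreviates its value at $x=0$. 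Integrating this identity term-by-term over $x\in[a,a+m]$ and dividing by $m$ introduces the polynomials $N_{n,m}(a)$ of \eqref{oihf93h4} through $\tfrac1m\!\int_a^{a+m}\binom{x}{n}\,dx=\tfrac1m N_{n,m}(a)$, and reduces the whole theorem to two tasks: evaluating $\tfrac1m\!\int_a^{a+m}\zeta(s,v+x,\boldsymbol{u})\,dx$, and computing the iterated differences $\Delta_x^n\zeta(s,v,\boldsymbol{u})$.

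Both tasks are driven by one fundamental recurrence, obtained by re-indexing the defining series:
\be\notag
\Delta_x\,\zeta(s,v+x,\boldsymbol{u})\,=\,-u_0\,(v+x)^{-s}-\zeta(s,v+x+1,\Delta\boldsymbol{u})\,,
\ee
i.e.\ one application of $\Delta_x$ splits off the leading coefficient $u_0$ against a Hurwitz-type term and shifts the sequence to $\Delta\boldsymbol{u}$. Applied with $s$ replaced by $s-1$ and telescoped over a block of length $m$, this recurrence rewrites the integral $\tfrac{1}{m(s-1)}\{\zeta(s-1,v+a,\boldsymbol{u})-\zeta(s-1,v+a+m,\boldsymbol{u})\}$ as $\tfrac{1}{m(s-1)}\sum_{n=0}^{m-1}\{\zeta(s-1,v+a+n+1,\Delta\boldsymbol{u})+u_0(v+a+n)^{1-s}\}$, which is exactly the common first line of both \eqref{uygf8fgffcdy6} and \eqref{ft76rfvcvc}.

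The remaining work is the evaluation of $\Delta_x^n\zeta(s,v,\boldsymbol{u})$, which I would unroll in two different ways to produce the two formulas. Stopping the recurrence after a single step gives $\Delta_x^n\zeta(s,v,\boldsymbol{u})=-u_0\,\Delta^{n-1}v^{-s}-\Delta^{n-1}\zeta(s,v+1,\Delta\boldsymbol{u})$, with the outer $\Delta$ acting on the first argument; substituting this into the integrated identity, reindexing $n\mapsto n+1$, and passing from $\Delta^n v^{-s}$ to the explicit binomial sum via \eqref{98yrbfe45} yields \eqref{ft76rfvcvc}. Iterating the recurrence all the way down instead yields
\be\notag
\Delta_x^n\zeta(s,v,\boldsymbol{u})\,=\,(-1)^n\zeta(s,v+n,\Delta^n\boldsymbol{u})+\sum_{r=0}^{n-1}(-1)^{r+1}\,\Delta^r u_0\cdot\Delta^{n-1-r}(v+r)^{-s}\,,
\ee
and substituting this into the integrated identity (where the overall factor $-\tfrac1m$ flips the sign of the source sum) and rewriting with \eqref{98yrbfe45} gives \eqref{uygf8fgffcdy6}: the term $(-1)^n\zeta(s,v+n,\Delta^n\boldsymbol{u})$ supplies the sum over the differenced sequences, while the source sum supplies the inner double sum $\sum_{l}\Delta^l u_0\sum_{k}(-1)^k\binom{n-l}{k}(k+l+v)^{-s}$.

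I expect the main obstacle to be establishing and bookkeeping the closed form of $\Delta_x^n\zeta(s,v,\boldsymbol{u})$. The cleanest route is to prove the displayed iterated-difference formula by induction on $n$ using the fundamental recurrence, the inductive step commuting $\Delta_x$ with the unit shift $x\mapsto x+1$ and feeding the recurrence back into the $\zeta(s,v+x+1,\Delta\boldsymbol{u})$ factor; after this, matching the signs $(-1)^{r+1}$ against $(-1)^n$ and reindexing to the stated double sum is routine but delicate. The only analytic points to verify are the validity of the Gregory--Newton expansion and the legitimacy of the termwise integration and of interchanging the orders of summation; as elsewhere in the paper these follow from the boundedness of $\boldsymbol{u}$ and absolute/uniform convergence for $\Re s>1$, the resulting globally convergent series then furnishing the analytic continuation to $s\in\mathbbm{C}\setminus\{1\}$.
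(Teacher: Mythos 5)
Your proposal is correct and follows essentially the same route as the paper: the same fundamental recurrence $\Delta\zeta(s,v,\boldsymbol{u})=-\zeta(s,v+1,\Delta\boldsymbol{u})-u_0v^{-s}$, the same two ways of unrolling it (fully iterated versus a single step) to get the two formulas, the Gregory--Newton expansion integrated over $[a,a+m]$, and the same telescoping of $\zeta(s-1,v+a,\boldsymbol{u})-\zeta(s-1,v+a+m,\boldsymbol{u})$ into the common first line. The only differences are presentational (you integrate before substituting the closed form of $\Delta^n\zeta(s,v,\boldsymbol{u})$, and you make explicit the induction that the paper leaves implicit).
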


\begin{proof}
From the definition of $\zeta(s,v,\boldsymbol{u})$ it follows 
that this Dirichlet series possesses the following recurrence relation in $v$
\begin{eqnarray}
\label{98ryh34f23}
\Delta \zeta(s,v,\boldsymbol{u}) \,\equiv
\,\zeta(s,v+1,\boldsymbol{u})-\zeta(s,v,\boldsymbol{u}) \! && = 
\sum_{n=0}^\infty \frac{-\Delta u_n}{\, (v+n+1)^{s}} - u_0v^{-s} \notag\\[3mm]
&& 
=-\zeta(s,v+1,\boldsymbol{\Delta u})- u_0 v^{-s}\,.\qquad\qquad
\end{eqnarray}
The second--order recurrence relation reads
\be\notag
\Delta^2 \zeta(s,v,\boldsymbol{u}) \,\equiv\,\Delta \big(\Delta \zeta(s,v,\boldsymbol{u})\big)
=\,\zeta(s,v+2, \Delta^2 \boldsymbol{u}) + \Delta u_0 (v+1)^{-s} - u_0 \Delta v^{-s}\,,
\ee
and more generally we have
\be\notag
\Delta^k \zeta(s,v,\boldsymbol{u}) =\,(-1)^k\zeta(s,v+k, \Delta^k \boldsymbol{u}) 
+ \sum_{l=0}^{k-1}  (-1)^{l+1}\!\cdot \Delta^l u_0 \cdot \Delta^{k-1-l} (v+l)^{-s} \,.
\ee
Writing the Gregory--Newton interpolation formula for $\,\zeta(s,x+v,\boldsymbol{u})\,$ we, therefore, obtain
\begin{eqnarray}
& \zeta(s,x+v,\boldsymbol{u}) & =\label{2uy39dghdbg}
\,\zeta(s,v,\boldsymbol{u})+ \sum_{n=1}^\infty \frac{(x-n+1)_n}{n!} \,\Delta^n \zeta(s,v,\boldsymbol{u}) \,= \notag\\[1mm]
\displaystyle
&&=
\,\zeta(s,v,\boldsymbol{u})+\sum_{n=1}^\infty \frac{(-1)^n(x-n+1)_n}{n!} \,\zeta(s,v+n, \Delta^n \boldsymbol{u}) + \notag\\[1mm]
\displaystyle&&\quad
+\sum_{n=1}^\infty \frac{(x-n+1)_n}{n!}  
\sum_{l=0}^{n-1}  (-1)^{l+1}\!\cdot \Delta^l u_0 \cdot \Delta^{n-1-l} (v+l)^{-s} .\notag\qquad\qquad
\end{eqnarray}
Effecting the term--by--term integration over the interval $x\in[a,a+m]$ yields
\begin{eqnarray}
&&\displaystyle \zeta(s,v,\boldsymbol{u})\,=
\frac{1}{\,m\,(s-1)\,}\Big\{\zeta(s-1,v+a,\boldsymbol{u}) - \zeta(s-1,v+a+m,\boldsymbol{u}) \Big\} - \notag\\[1mm]
\displaystyle&&\displaystyle\qquad\qquad\qquad\qquad
- \frac{1}{m}\!\sum_{n=1}^\infty (-1)^n N_{n,m}(a)\, \zeta(s,v+n, \Delta^n \boldsymbol{u}) \, - \notag
\end{eqnarray}
\begin{eqnarray}
\displaystyle&&\displaystyle\qquad\qquad\qquad\qquad
- \frac{1}{m}\!\sum_{n=1}^\infty N_{n,m}(a)\sum_{l=0}^{n-1} 
(-1)^{l+1}\!\cdot \Delta^l u_0 \cdot \Delta^{n-1-l} (v+l)^{-s}  .
\qquad\qquad \notag
\end{eqnarray}
Finally, simplifying the expression in curly brackets 
\begin{eqnarray}
&&\displaystyle 
\zeta(s-1,v+a,\boldsymbol{u}) - \zeta(s-1,v+a+m,\boldsymbol{u}) \,=\label{8734rgb3e487y}\\[2mm]
&&\displaystyle \qquad\qquad
= \sum_{n=0}^{m-1} \underbrace{\Big[\zeta(s-1,v+a+n,\boldsymbol{u}) - \zeta(s-1,v+a+n+1,\boldsymbol{u}) \Big] 
}_{-\Delta\zeta(s-1,v+a+n,\boldsymbol{u}) }= \qquad\qquad\notag \\[1mm]
&&\displaystyle \qquad\qquad
\,=\sum_{n=0}^{m-1} \zeta(s-1,v+a+n+1,\Delta\boldsymbol{u}) \,
+ u_0\!\sum_{n=0}^{m-1} \! (v+a+n)^{1-s} , \qquad\qquad\notag 
\end{eqnarray}
we arrive at 
\begin{eqnarray}
&&\displaystyle \zeta(s,v,\boldsymbol{u})\,=
\frac{1}{\,m\,(s-1)\,}\!\sum_{n=0}^{m-1} \!\Big\{\zeta(s-1,v+a+n+1,\Delta\boldsymbol{u}) \,+
u_0 (v+a+n)^{1-s}\Big\} - \notag\\[1mm]
\displaystyle&&\displaystyle\qquad\qquad\qquad 
- \frac{1}{m}\!\sum_{n=1}^\infty (-1)^n N_{n,m}(a)\, \zeta(s,v+n, \Delta^n \boldsymbol{u})\,- \label{ghiht57yf}\\[1mm]
\displaystyle&&\displaystyle\qquad\qquad\qquad
- \frac{1}{m}\!\sum_{n=1}^\infty N_{n,m}(a)\sum_{l=0}^{n-1}  (-1)^{l+1}\!\cdot \Delta^l u_0 
\cdot \Delta^{n-1-l} (v+l)^{-s}  ,
\notag
\end{eqnarray}
which is a generalization of \eqref{0934uf023jhd} to the Dirichlet series $\,\zeta(s,v,\boldsymbol{u})\,$.\qed

Now, from Eq.~\eqref{2uy39dghdbg} we may also proceed in a slightly different manner.
Applying operator $\Delta^{n-1}$ to \eqref{98ryh34f23} gives
\be\notag
\Delta^n \zeta(s,v,\boldsymbol{u}) 
=-\Delta^{n-1}\zeta(s,v+1,\boldsymbol{\Delta u})- u_0 \Delta^{n-1}v^{-s}.
\ee
Therefore, \eqref{2uy39dghdbg} becomes
\begin{eqnarray}
&&\displaystyle
\zeta(s,x+v,\boldsymbol{u}) \,=
\,\zeta(s,v,\boldsymbol{u}) - \sum_{n=1}^\infty \frac{(x-n+1)_n}{n!} \,\Delta^{n-1}\zeta(s,v+1,\boldsymbol{\Delta u}) \,- \notag\\[2mm]
\displaystyle&&\displaystyle\qquad\qquad\qquad\quad
- u_0\!\sum_{n=1}^\infty \frac{(x-n+1)_n}{n!} \,\Delta^{n-1}v^{-s}.
\end{eqnarray}
Integrating termwise over $x\in[a,a+m]$ and accounting for \eqref{8734rgb3e487y} yields
\begin{eqnarray}
&&\displaystyle \zeta(s,v,\boldsymbol{u})\,=\,
\frac{1}{\,m\,(s-1)\,}\!\sum_{n=0}^{m-1}\!\Big\{ \zeta(s-1,v+a+n+1,\Delta\boldsymbol{u}) \,+ u_0(v+a+n)^{1-s}\Big\} + \notag\\[1mm]
\displaystyle&&\displaystyle\qquad
+ \frac{1}{m}\!\sum_{n=1}^\infty N_{n,m}(a)\, \Delta^{n-1}\zeta(s,v+1, \Delta \boldsymbol{u})
+ \frac{u_0}{m}\!\sum_{n=1}^\infty N_{n,m}(a) \,\Delta^{n-1} v^{-s}   \,,\label{73dyd2odhedy}
\end{eqnarray}
which, unlike \eqref{ghiht57yf}, 
directly relates $\,\zeta(\ldots,\ldots,\boldsymbol{u})\,$ to $\,\zeta(\ldots,\ldots,\Delta\boldsymbol{u}).$
\end{proof}

Although formul\ae~\eqref{ghiht57yf}, \eqref{73dyd2odhedy} may seem quite theoretical, they both have a multitude of interesting applications
and consequences for the concrete $L$-functions. For example, if $u_n$ is a polynomial of degree $k-1$ in $n$,
the $k$-th finite difference $\,\Delta^k\boldsymbol{u}\,$ vanishes and so do higher 
differences. Hence $\,\zeta(s,v+n, \Delta^n \boldsymbol{u})=0\,$ for $n\geqslant k$
and the right part of the previous equation becomes much simpler. 
On the other hand, if $u_n$ is a polynomial in $n$, the considered $L$-function may always be written as a linear combination 
of the $\zeta$-functions, so that we have a relation between the $\zeta$-functions of arguments $s, s-1, s-2, \ldots\,$
We come to discuss some of such examples in the next Corollary.

\begin{corollary}\label{908wq2jhds}
The following formul\ae~relating the $\zeta$-functions of different arguments and types hold
\begin{eqnarray}
&&\displaystyle \big(v+a+\tfrac{1}{2}m-1\big)\!\cdot\zeta(s,v) \,=\,-\frac{\zeta(s-1,v+a)}{\,s-1\,}\,
+ \,\zeta(s-1,v)\,+ \qquad\qquad\label{jic340jnd4}\\[1mm]
&&\displaystyle \qquad\qquad\qquad
+ \,\frac{1}{\,m\,(s-1)\,}\!\sum_{n=0}^{m-1} \frac{\,m-n-1\,}{\,(v+a+n)^{s-1}} \,+ \notag\\[1mm]
&&\displaystyle \qquad\qquad\qquad\notag
+ \,\frac{1}{m}\!\sum_{n=0}^\infty (-1)^n N_{n+2,m}(a) 
\!\sum_{k=0}^{n} (-1)^k \binom{n}{k} (k+v)^{-s}   \,,
\end{eqnarray}

\begin{eqnarray}
&&\displaystyle
\big(v+a-\tfrac{1}{2}\big)\!\cdot\zeta(s,v) \,=\,
-\frac{\,\zeta(s-1,v+a)\,}{\,s-1\,} \,+ \,\zeta(s-1,v)\, +   \label{f234i94m4rf}\\[2mm]
&&\displaystyle\qquad\qquad\qquad\qquad\qquad
+ \sum_{n=0}^\infty (-1)^n \psi_{n+2}(a) \!  \sum_{k=0}^{n} (-1)^k \binom{n}{k} (k+v)^{-s}   \,, \notag
\end{eqnarray}

\begin{eqnarray}
&&\displaystyle \label{9u8y943dh34hb2}
\big(v+a\big)\!\cdot\zeta(s,v) =\frac{(v+a)^{1-s}}{\,2\,(s-1)\,}
-\frac{\,\zeta(s-1,v+a)\,}{\,s-1\,}  \,+ \,\zeta(s-1,v)\, +\\[2mm]
&&\displaystyle\qquad\qquad\qquad\qquad\qquad
+ \frac{1}{2}\!\sum_{n=0}^\infty (-1)^n N_{n+2,2}(a) 
\! \sum_{k=0}^{n} (-1)^k \binom{n}{k} (k+v)^{-s} \,,\notag
\end{eqnarray}

\be\label{9u8y943dh34hb3}
\big(v-\tfrac{1}{2}\big)\!\cdot\zeta(s,v) =
\frac{s-2}{\,s-1\,}\cdot\zeta(s-1,v)
+ \sum_{n=0}^\infty (-1)^n G_{n+2}  \!  \sum_{k=0}^{n} (-1)^k \binom{n}{k} (k+v)^{-s}    \,,\qquad
\ee

\begin{eqnarray}
\frac{m}{2}\cdot\zeta(s) \,=\,\frac{s-2}{\,s-1\,}\cdot\zeta(s-1)\!\!
&&\displaystyle + \,\frac{\,m H_m^{(s-1)} - H_m^{(s-2)}\,}{\,m\,(s-1)\,} \, + \label{jhf39u4hfnr}
\end{eqnarray}
\begin{eqnarray}
&&\displaystyle 
+ \,\frac{1}{m}\!\sum_{n=0}^\infty (-1)^n N_{n+2,m}(0) \!
\sum_{k=0}^{n} (-1)^k \binom{n}{k} (k+1)^{-s} ,\notag
\end{eqnarray}

\be\label{po2c43i0dj4}
\zeta(s) \,=\,\frac{2(s-2)}{\,s-1\,}\cdot\zeta(s-1)\,
+ \,2\!\sum_{n=0}^\infty (-1)^n  G_{n+2} \!\sum_{k=0}^{n} (-1)^k \binom{n}{k} (k+1)^{-s} ,
\ee

\begin{eqnarray}
\frac{\zeta(s-1,a)}{\,s-1\,} &&\displaystyle\!\!
=\,\frac{a^{1-s}}{\,s-1\,}\,-\big(a+\tfrac{1}{2}m\big)\!\cdot\zeta(s)
+ \,\zeta(s-1)\,+ \label{983ygf39}\notag\\[2mm]
&&\displaystyle \qquad\qquad
+ \,\frac{1}{\,m\,(s-1)\,}\!\sum_{n=1}^{m} \frac{\,m-n\,}{\,(a+n)^{s-1}} \, +\\[2mm]
&&\displaystyle \qquad\qquad
+ \,\frac{1}{m}\!\sum_{n=0}^\infty  (-1)^n N_{n+2,m}(a) \!\sum_{k=0}^{n} (-1)^k \binom{n}{k} (k+1)^{-s}  \,,\notag
\end{eqnarray}
where $m$ is a natural number and $H_m^{(s)}$ is the generalized harmonic number
(see p.~\pageref{ufh3984fbnf}).
\end{corollary}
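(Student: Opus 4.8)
The plan is to recognise all seven identities as specialisations of the Dirichlet--series expansions of Theorem~\ref{ujihd97823hdbe} (equivalently, as what one gets by integrating the Gregory--Newton identity \eqref{8934dh2nd4} against a \emph{linear} weight instead of the constant weight $1/m$ that produced \eqref{98y6b796}). The device is to take the sequence $\boldsymbol{u}=\{u_n\}$ affine in $n$, say $u_n=\alpha n+\beta$. Then $\Delta\boldsymbol{u}=\alpha\boldsymbol{1}$ and $\Delta^k\boldsymbol{u}=\boldsymbol{0}$ for every $k\geqslant2$, so in \eqref{ft76rfvcvc} (and in \eqref{uygf8fgffcdy6}) the family $\zeta(s,v+n,\Delta^n\boldsymbol{u})$ dies for $n\geqslant2$, while $\zeta(s-1,\,\cdot\,,\Delta\boldsymbol{u})=\alpha\,\zeta(s-1,\,\cdot\,)$ collapses to an ordinary Hurwitz zeta-function. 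On the left the same affinity gives $\zeta(s,v,\boldsymbol{u})=\sum_n(\alpha n+\beta)(v+n)^{-s}=\alpha\,\zeta(s-1,v)+(\beta-\alpha v)\,\zeta(s,v)$, which is precisely what produces both the shifted term $\zeta(s-1,v)$ and the affine prefactor (such as $v-\tfrac12$ or $v+a+\tfrac12 m-1$) multiplying $\zeta(s,v)$ in the statements.

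First I would record the three structural mechanisms. The prefactor and the $\zeta(s-1,v)$ term come directly from the left-hand combination above, with $\alpha,\beta$ chosen per formula. The block $-\tfrac{\zeta(s-1,v+a)}{s-1}$ together with the finite sum $\tfrac{1}{m(s-1)}\sum_{n=0}^{m-1}\tfrac{m-n-1}{(v+a+n)^{s-1}}$ comes from the $\tfrac{1}{m(s-1)}\sum_{n=0}^{m-1}\zeta(s-1,v+a+n+1,\Delta\boldsymbol{u})$ block of \eqref{ft76rfvcvc}: after replacing $\Delta\boldsymbol{u}$ by $\alpha\boldsymbol{1}$ and writing each $\zeta(s-1,v+a+n+1)$ through the recurrence $\zeta(s-1,w+1)=\zeta(s-1,w)-w^{1-s}$, the telescoping leaves one genuine $\zeta(s-1,v+a)$ and a finite weighted sum of the powers $(v+a+n)^{1-s}$. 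The infinite series carrying $N_{n+2,m}(a)$ is the single surviving finite-difference contribution: with $\boldsymbol{u}$ affine, only $\Delta u_0=\alpha$ is nonzero among the higher differences, so the inner $l$-sum in \eqref{ghiht57yf} (or \eqref{uygf8fgffcdy6}) keeps just one value of $l$, leaving $\sum_{n\geqslant2}N_{n,m}(a)\,\Delta^{\,n-2}(\,\cdot\,)^{-s}$, which reindexes to $\tfrac1m\sum_{n\geqslant0}(-1)^nN_{n+2,m}(a)\sum_k(-1)^k\binom{n}{k}(k+v)^{-s}$.

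Having the master relation in the parameters $(a,m,\alpha,\beta,v)$, I would read off the individual formulae by specialisation, using the Hurwitz recurrence $\zeta(s,v+1)=\zeta(s,v)-v^{-s}$ to normalise every argument so that all finite differences appear as $\Delta^n v^{-s}$. The case $m=1$ turns $N_{n,1}(a)$ into $\psi_n(a)$ by \eqref{897tv87tc}, giving \eqref{f234i94m4rf}, and the further choice $a=0$ turns $\psi_{n+2}(0)$ into $G_{n+2}$, giving \eqref{9u8y943dh34hb3}; $m=2$ yields \eqref{9u8y943dh34hb2}, $v=1$ yields \eqref{jhf39u4hfnr} and \eqref{po2c43i0dj4}, and \eqref{983ygf39} follows by solving the $v=1$ instance for $\zeta(s-1,a)$ and relabelling. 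The main obstacle is not conceptual but the delicate bookkeeping: tracking which finite-difference and which shifted-zeta terms survive as $\Delta^{\geqslant2}\boldsymbol{u}$ vanishes, getting the index shift to $N_{n+2,m}$ exactly right, and carrying the recurrence through so that the spurious polynomial terms $v^{-s},v^{1-s}$ cancel and the arguments land on $v$ and $v+a$ as stated; one must also justify the termwise integration and rearrangement, and extend each identity from the half-plane $\Re s>2$ (where every series converges absolutely) to all $s\in\mathbbm{C}\setminus\{1\}$ by analytic continuation, exactly as for the parent expansion \eqref{98y6b796}.
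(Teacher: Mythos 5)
Your proposal follows essentially the same route as the paper: specialising Theorem~\ref{ujihd97823hdbe} (formula \eqref{73dyd2odhedy}, i.e.\ \eqref{ft76rfvcvc}) to an affine sequence $u_n$, so that $\zeta(s,v,\boldsymbol{u})$ collapses to $(\beta-\alpha v)\zeta(s,v)+\alpha\,\zeta(s-1,v)$, telescoping the shifted-zeta block via the Hurwitz recurrence, and then reading off each identity by the same choices of $m$, $a$ and $v$. The bookkeeping you flag (the index shift to $N_{n+2,m}$, the merging of the $u_0$-series with the surviving $\Delta$-contribution, and the normalisation $\alpha=\beta=1$) is exactly what the paper carries out, so the plan is correct and matches the published argument.
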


\begin{proof}
Let $u_n=P_1(n)=\alpha + \beta n$, where $\alpha$ and $\beta$ are some coefficients.
Then,
\be\notag
u_0=\alpha\,,\qquad \Delta u_n=\beta ,\qquad \Delta^2 u_n=0\,,\ldots, \quad \Delta^k u_n=0 \,,
\ee
and hence $\,\zeta(s,v, \Delta \boldsymbol{u})=\beta\zeta(s,v)\,$. On the other hand, 
from a simple arithmetic argument it also follows that
\be\notag
\zeta(s,v,\boldsymbol{u})\,=\,(\alpha - \beta v) \,\zeta(s,v) \,+\, \beta \,\zeta(s-1,v)\,.
\ee
Using the recurrence relation of the Hurwitz $\zeta$-function and the fact that
\be
\Delta^{n-1} \zeta(s,v+1) \,=\,-\Delta^{n-2}v^{-s} - \Delta^{n-1}v^{-s} ,
\ee
as well as recalling that
$N_{1,m}(a) = ma+\frac{1}{2}m^2$, see \eqref{uyh293dd}, and setting for simplicity $\alpha=1$
and $\beta=1$,\footnote{It is possible to perform these calculations with arbitrary $\alpha$
and $\beta$, but the resulting expressions become very cumbersome.} formula \eqref{73dyd2odhedy} becomes
\begin{eqnarray}
&&\displaystyle \big(1-v-a-\tfrac{1}{2}m\big)\zeta(s,v) \, =\,
\frac{1}{\,m\,(s-1)\,}\!\sum_{n=0}^{m-1}\!\Big\{  \zeta(s-1,v+a+n+1)  \, + \Big.    \notag\\[1.5mm]
&&\displaystyle\qquad 
\Big. +\,  (v+a+n)^{1-s}\Big\} 
\,- \,\zeta(s-1,v)\,
- \,\frac{1}{m}\!\sum_{n=0}^\infty N_{n+2,m}(a) \,\Delta^{n} v^{-s}  
\,. \qquad\label{73dyd2}\\[0.8mm]\notag
\end{eqnarray}
By recursively decreasing the second argument in $\,\zeta(s-1,v+a+n+1) \,$,
we obtain the following functional relationship
\begin{eqnarray}
&&\displaystyle \big(1-v-a-\tfrac{1}{2}m\big)\zeta(s,v) \,=\,\frac{\zeta(s-1,v+a)}{\,s-1\,}\,
- \,\zeta(s-1,v)\,- \label{983ygf39b}\\[0.8mm]
&&\displaystyle \qquad\qquad
- \,\frac{1}{\,m\,(s-1)\,}\!\sum_{n=0}^{m-1} \frac{\,m-n-1\,}{\,(v+a+n)^{s-1}} 
- \,\frac{1}{m}\!\sum_{n=0}^\infty N_{n+2,m}(a) \,\Delta^{n} v^{-s}   \,,\notag
\end{eqnarray}
where $m\in\mathbbm{N}$ and the first sum in the second line should be taken as nothing
for $m=1$. This is our formula \eqref{jic340jnd4} and it has many interesting particular cases.
For instance, for $m=1$ and $m=2$ we obtain \eqref{f234i94m4rf} and \eqref{9u8y943dh34hb2}
respectively, since $N_{n+2,1}(a) =\psi_{n+2}(a)$. Furthermore, making $a=0$ we get \eqref{9u8y943dh34hb3},
because $\psi_{n}(0)=G_n.$ Setting $v=1$ and $a\in\mathbbm{N}_0$ 
gives us corresponding expressions for $\zeta(s)$. For example, at $v=1$ and $a=0$ 
relationship \eqref{983ygf39b} becomes
\be
\frac{m}{2}\zeta(s) =\frac{s-2}{\,s-1\,}\zeta(s-1)
+\frac{1}{\,m\,(s-1)\,}\!\sum_{n=1}^{m} \!\frac{\,m-n\,}{\,n^{s-1}} 
+ \frac{1}{m}\!\sum_{n=0}^\infty N_{n+2,m}(0) \,\Delta^{n} 1^{-s} , \quad
\ee
where the finite sum in the middle may be also written in terms of the generalized harmonic numbers;
we, thus, arrive at \eqref{jhf39u4hfnr}.
Furthermore, putting $m=1$ we obtain a strikingly simple funtional relationship 
with Gregory's coefficients, equation \eqref{po2c43i0dj4}.
From \eqref{983ygf39b} we may also obtain a relationship between $\zeta(s)$ and $\zeta(s,a)$. Putting $v=1$
gives us relationship \eqref{983ygf39}. Finally, proceeding similarly with $u_n=P_2(n)$ and using 
\be\notag
\Delta^n \zeta(s,v,\boldsymbol{u}) \,
=\,\Delta^{n-2}\zeta(s,v+2, \Delta^2 \boldsymbol{u}) + \Delta u_0 \Delta^{n-2} (v+1)^{-s} - u_0 \Delta^{n-1} v^{-s}\,,
\ee
we may obtain formul\ae~relating $\zeta$-functions 
of arguments $s, s-1$ and $s-2$. The same procedure may be applied to $u_n=P_3(n)$ and so on.
\end{proof}

\begin{corollary}
The generalized Stieltjes constants $\gamma_m(v)\,$ may be given by the following series representations
\begin{eqnarray}\label{kdx2je}
\gamma_m(v)\, =\,-\frac{\ln^{m+1} v}{\,m+1\,}+\sum_{n=0}^\infty\big| G_{n+1}\big
|\sum_{k=0}^{n} (-1)^k \binom{n}{k}\frac{\ln^m (k+v)}{k+v} \,,
\end{eqnarray}
where $\Re v >0$,
\begin{eqnarray}\label{fds5tfde4}
\gamma_m(v)\,=\,-\frac{\ln^{m+1} (v-1)}{\,m+1\,} -\sum_{n=0}^\infty C_{n+1}
\sum_{k=0}^{n} (-1)^k \binom{n}{k}\frac{\ln^m (k+v)}{k+v}\,,
\end{eqnarray}
where $\Re v >1$, 
\begin{eqnarray}\label{jc309u4n2ol3}
\gamma_m(v)\!&&=\,-\frac{1}{r\,(m+1)}\sum_{l=0}^{r-1}\ln^{m+1}(v+a+l) \, + \\[0.8mm]
&&\qquad\quad + \,\frac{1}{\,r\,}\!
\sum_{n=0}^\infty (-1)^n N_{n+1,r}(a)
\sum_{k=0}^{n} (-1)^k \binom{n}{k}\frac{\ln^m (k+v)}{k+v}\,, \notag
\end{eqnarray}
where $r\in\mathbbm{N}$, $\Re a > -1$ and $\Re v >-\Re a$, and 
\begin{eqnarray}
&&\displaystyle \gamma_m(v)\,=\,\frac{1}{\,1-v-a-\tfrac{1}{2}r\,}
\left\{\frac{(-1)^m}{\,m+1\,}\,\zeta^{(m+1)}(0,v+a)\,-\, (-1)^m \zeta^{(m)}(0,v)\, 
+  \vphantom{\sum_a^b}   \right. \qquad\notag\\[2mm]
&&\displaystyle\qquad\qquad\qquad\qquad
+ \,\frac{1}{\,r\,(m+1)\,}\!\sum_{n=0}^{r-2} (r-n-1)\ln^{m+1}(v+a+n) \, -  \notag\\[2mm]
&&\displaystyle\qquad\qquad \qquad\qquad
\left.  - \,\frac{1}{\, r\,}\!\sum_{n=0}^\infty (-1)^n N_{n+2,r}(a) 
\sum_{k=0}^{n} (-1)^k \binom{n}{k}\frac{\ln^m (k+v)}{k+v}\right\} \label{ilu42798hd2ibd}
\end{eqnarray}
under the same conditions.
\end{corollary}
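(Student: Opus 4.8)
The plan is to proceed exactly as in Corollary~\ref{ihy923hbs}, inserting each of the globally convergent representations of $\zeta(s,v)$ from Theorem~\ref{ij298034hd} (and, for the last formula, the functional relation of Corollary~\ref{908wq2jhds}) into the Laurent expansion that defines the generalized Stieltjes constants. In analogy with \eqref{dhd73vj6s1}, the $\gamma_m(v)$ are the coefficients in
\[
\zeta(s,v)=\frac{1}{\,s-1\,}+\sum_{m=0}^\infty \frac{(-1)^m\gamma_m(v)}{m!}\,(s-1)^m\,,\qquad s\in\mathbbm{C}\setminus\!\{1\}\,.
\]
Writing $t\equiv s-1$, everything rests on two elementary expansions obtained from $x^{-t}=e^{-t\ln x}$: first $(k+v)^{-s}=(k+v)^{-1}\sum_{m\geqslant0}(-1)^m\ln^m(k+v)\,t^m/m!$, and second $w^{1-s}/(s-1)=t^{-1}+\sum_{m\geqslant0}(-1)^{m+1}\ln^{m+1}w\,t^m/(m+1)!$.

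For \eqref{kdx2je}, \eqref{fds5tfde4} and \eqref{jc309u4n2ol3} I would substitute these two expansions into \eqref{jxh293nbwes}, \eqref{jx20nsw} and \eqref{98y6b796} (with $m$ renamed $r$), respectively. In each case the rational pole term reproduces the principal part $t^{-1}$ together with a $\log$-power contribution, while the double sum over $n,k$ contributes $(-1)^m t^m/m!$ times the inner finite difference of $\ln^m(k+v)/(k+v)$. Equating the coefficient of $t^m$ with $(-1)^m\gamma_m(v)/m!$ and clearing the factor $m!/(-1)^m$ yields precisely \eqref{kdx2je}, \eqref{fds5tfde4} and \eqref{jc309u4n2ol3}; for the last of these one also uses that the $r$ summands each contribute a copy of $t^{-1}$, so that $\sum_{n=0}^{r-1}1=r$ collapses them to the single pole $1/(s-1)$.

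For the final and most involved formula \eqref{ilu42798hd2ibd}, I would start instead from \eqref{983ygf39b} with $m$ replaced by $r$, whose left-hand side is $(1-v-a-\tfrac12 r)\zeta(s,v)$. Besides the two elementary expansions above, here one additionally expands the shifted values in Taylor series about the first argument $0$, namely $\zeta(s-1,w)=\sum_{j\geqslant0}\zeta^{(j)}(0,w)\,t^j/j!$, where $\zeta^{(j)}$ denotes the $j$th derivative in the first variable, and one converts the finite differences via \eqref{98yrbfe45}. The term $\zeta(s-1,v+a)/(s-1)$ then produces a pole $\zeta(0,v+a)/t$; using $\zeta(0,x)=\tfrac12-x$ together with the pole $-\tfrac12(r-1)/t$ coming from the finite sum $\sum_{n=0}^{r-1}(r-n-1)=\tfrac12 r(r-1)$, one checks that the total residue on the right equals $1-v-a-\tfrac12 r$, matching the principal part of the left-hand side. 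Equating the coefficients of $t^m$, clearing $m!/(-1)^m$, and dividing through by the prefactor $1-v-a-\tfrac12 r$ then gives \eqref{ilu42798hd2ibd}, its four braced terms corresponding term-for-term to the four contributions just described (the $n=r-1$ summand dropping out because its weight $r-n-1$ vanishes, which lowers the upper limit of the $\ln^{m+1}$-sum to $r-2$).

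In all four cases the single nonroutine point—the main obstacle—is the legitimacy of the term-by-term Taylor expansion and the interchange of the two infinite summations with the power-series expansion in $t$. This is justified exactly as in Corollary~\ref{ihy923hbs}: each representation remains valid and defines a holomorphic function on $\mathbbm{C}\setminus\!\{1\}$, the relevant products $(s-1)\zeta(s,v)$ (respectively the combination appearing in \eqref{983ygf39b}) are entire, and the double series converge uniformly on a punctured neighborhood of $s=1$, so the expansions may be carried out termwise and the resulting coefficients read off unambiguously.
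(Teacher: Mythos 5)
Your proposal is correct and follows essentially the same route as the paper, which likewise obtains all four formulae by inserting the representations \eqref{jxh293nbwes}, \eqref{jx20nsw}, \eqref{98y6b796} and \eqref{983ygf39b} into the Laurent expansion \eqref{dhd73vj6s1b} and equating coefficients of $(s-1)^m$, exactly as in Corollary~\ref{ihy923hbs}. Your detailed bookkeeping (in particular the residue check $\zeta(0,v+a)-\tfrac12(r-1)=1-v-a-\tfrac12 r$ and the dropping of the $n=r-1$ term) is accurate and merely makes explicit what the paper leaves implicit.
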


\begin{proof}
The generalized Stieltjes constants $\gamma_m(v)\,$, $m\in\mathbbm{N}_0$, 
$v\in\mathbbm{C}\setminus\!\{0,-1,-2,\ldots\}$,
are introduced analogously to the ordinary Stieltjes constants
\begin{eqnarray}
\label{dhd73vj6s1b}
\zeta(s,v)\,=\,\frac{1}{\,s-1\,} -\Psi(v) + \sum_{m=1}^\infty\frac{(-1)^m\gamma_m(v)
}{m!}\,(s-1)^m \,, \qquad s\in\mathbbm{C}\setminus\!\{1\},\quad
\end{eqnarray}
with  $\gamma_0(v)=-\Psi(v)\,$, see e.g.~\cite[p.~541, Eq.~(14)]{iaroslav_07}.\footnote{For more
information on $\gamma_m(v)\,$, see \cite{iaroslav_06}, \cite{iaroslav_07}, and the literature given in 
the last reference. Note that since $\zeta(s,1)=\zeta(s)$, 
the generalized Stieltjes constants $\gamma_m(1)=\gamma_m$.}~Thus, 
from expansions \eqref{jxh293nbwes}, \eqref{jx20nsw}, \eqref{98y6b796} and \eqref{983ygf39b}, 
by proceeding in the same manner as in Corollary \ref{ihy923hbs}, 
we deduce the announced series representations.\footnote{It may also
be noted that the particular case $m=1$ of \eqref{kdx2je} 
was earlier given by Coffey \cite[p.~2052,  Eq.~(1.18)]{coffey_08}.}~Notice also 
that the above formul\ae~may be rewritten in a slightly different way by means of the recurrence relation 
for the generalized Stieltjes constants $\,\gamma_m(v+1)=\gamma_m(v)-v^{-1}\ln^m \!v\,$.
\end{proof}

\vspace{1em}
\noindent{\bfseries Remark 2, related to the digamma function (the $\Psi$-function) and 
to Euler's constant ${\boldsymbol\gamma}$.}
Since 
\be
(-1)^n\Delta^n v^{-1}\,=\,\frac{n!}{(v)_{n+1}}
\ee
we have for the zeroth Stieltjes constant, 
and hence for the digamma function $\Psi(v)$, the following expansions
\begin{eqnarray}\label{jnf394nf3}
&& \Psi(v)\, =\,\ln v- \sum_{n=1}^\infty\frac{\big| G_{n}\big|\,(n-1)!}{(v)_{n}}\,,\qquad 
\Re v >0\,, \\[1.5mm]
&& \label{d2398he}
\Psi(v)\,=\,\ln(v-1) +
\sum_{n=1}^\infty\frac{C_{n}\,(n-1)!}{(v)_{n}}\,,\qquad 
\Re v >1\,, \\[1.5mm]
&& 
\Psi(v)\,=\,\ln(v+a) + 
\sum_{n=1}^\infty\frac{(-1)^n\,\psi_{n}(a)\,(n-1)!}{(v)_{n}}\,,\qquad \label{890u4h34f0}  
\end{eqnarray}
\begin{eqnarray}
&& 
\Psi(v)\,=\, \frac{1}{\,r\,}\!\sum_{l=0}^{r-1}\ln(v+a+l) + \frac{1}{\,r\,}\!
\sum_{n=1}^\infty\frac{(-1)^n\,N_{n,r}(a)\,(n-1)!}{(v)_{n}}\,,\qquad \label{890u4h34f}\\[2mm]
&&
\Psi(v)\,=\, \frac{1}{\,\tfrac{1}{2}r+v+a-1\,}
\left\{\ln\Gamma(v+a) + v - \frac12\ln2\pi - \frac12 +  \vphantom{\sum_a^b}\right.\notag\\[2mm]
&&\displaystyle \qquad
\left. 
+\, \frac{1}{\,r\,}\!\sum_{n=0}^{r-2} (r-n-1)\ln(v+a+n) \, 
+\, \frac{1}{\,r\,}\!\sum_{n=1}^\infty\frac{(-1)^n\,N_{n+1,r}(a)\,(n-1)!}{(v)_{n}}\right\}\,,\qquad
\label{kih2g3o8dgg2ikhbde}
\end{eqnarray}
with $r\in\mathbbm{N}$,  $\Re a>-1$ and $\Re v >-a$, because of \eqref{897tv87tc}.\footnote{In the last formula the sum in the middle should be taken as zero for $r=1$.
Note also that $\,\zeta(0,x)=\frac12-x\,$ and $\,\zeta'(0,x)=\ln\Gamma(x)-\frac12\ln2\pi\,$. 
Similarly to \eqref{890u4h34f0} formul\ae~\eqref{jc309u4n2ol3}--\eqref{ilu42798hd2ibd} and \eqref{kih2g3o8dgg2ikhbde} may also be written
in terms of $\psi_{n}(a)$ instead of $N_{n,r}(a)$ when $r=1$, see \eqref{897tv87tc}.}
First two representations coincide with the not well-known Binet--N{\o}rlund expansions for the digamma function 
\cite[pp.~428--429, Eqs.~(91)--(94)]{iaroslav_08},\footnote{Formula \eqref{d2398he}
reduces to \cite[p.~429, Eq.~(94), first formula]{iaroslav_08} by putting $v$ instead of $v-1$ and by making
use of the recurrence relationship for the digamma function $\Psi(v+1)=\Psi(v)+v^{-1}$.}
while \eqref{890u4h34f0}--\eqref{kih2g3o8dgg2ikhbde} seem to be new. From \eqref{jnf394nf3} and \eqref{d2398he}, it also immediately follows that
$\,\ln(v-1) < \Psi(v) < \ln v\,$ for $\, v>1\,$,
since the sums with $G_n$ and $C_n$ keep their sign.\footnote{This simple and important result is not new, but 
its derivation from \eqref{jnf394nf3} and \eqref{d2398he} seems to be novel, and in addition, is elementary.}
We may also obtain series expansions for the digamma function from \eqref{984rhc44}, but the resulting expressions strongly depend on $k$.
For instance, putting $k=2$ and expanding both sides into the Laurent series \eqref{dhd73vj6s1b}, we obtain the following formula
\be\label{h9843dh2edf}
\Psi(v)\, =\,2\ln\Gamma(v) - 2v\ln v + 2v +2\ln v -\ln2\pi + 2\!\sum_{n=1}^\infty\frac{ (-1)^n \, G_{n}^{(2)}\,(n-1)!}{(v)_{n}}\,, 
\ee
which relates the $\Gamma$-function to its logarithmic derivative.\footnote{There were many attempts
aiming to find possible relationships between these two functions. For instance, in 1842 Carl Malmsten, by trying to find such 
a relationship, obtained a variant of Gauss' theorem for $\Psi(v)$ at $v\in\mathbbm{Q}$, see \cite[p.~37, Eq.~(23)]{iaroslav_06},
\cite[p.~584, Eq.~(B.4)]{iaroslav_07}.}
For higher $k$ these expressions become quite cumbersome and also imply the derivatives of the $\zeta$-function at negative integers.
In particular, for $k=3$, we deduce
\begin{eqnarray}\label{h9823eh3r}
&&\Psi(v)\, =\,3\ln\Gamma(v) - 6\zeta'(-1,v) + 3v^2\ln{v} -  \frac32 v^2 - 6v\ln(v)+  \\[2mm]
&&\qquad\qquad \displaystyle +3 v+3\ln{v} - \frac32\ln2\pi + \frac12  
+ 3\!\sum_{n=1}^\infty\frac{ (-1)^n \, G_{n}^{(3)}\,(n-1)!}{(v)_{n}}\,,\notag
\end{eqnarray}
provided the convergence of the last series.
Formula \eqref{h9843dh2edf} is also interesting in that it gives series with rational terms for $\ln\Gamma(v)$
if $v\in\mathbbm{Q}$ (we only need to use Gauss' digamma theorem for this \cite[p.~584, Eq.~(B.4)]{iaroslav_07}). 
Note also that all series \eqref{jnf394nf3}--\eqref{h9823eh3r} converge very rapidly for large $v$. For instance, 
putting in \eqref{890u4h34f} or in \eqref{kih2g3o8dgg2ikhbde} $\,v=2\pi\,$, $a=3$, $m=2$, and taking only 10 terms in the last sum, 
we get the value of $\Psi(2\pi)$ with 9 correct digits.

Putting in the previous formul\ae~for the digamma function argument $v\in\mathbbm{Q}$, we may also obtain series with rational terms 
for Euler's constant. The most simple is to put $v=1$. In this case, formula \eqref{jnf394nf3} reduces
to the famous Fontana--Mascheroni series
\be\label{09384rtjh3}
\Psi(1)\,=\,-\gamma\,=\,-\sum_{n=1}^\infty \frac{\big| G_n\big|}{n}\,,
\ee
see e.g.~\cite[p.~207]{vacca_02}, \cite[p.~539]{iaroslav_07}, 
\cite[pp.~406, 413,429--430]{iaroslav_08}, \cite[p.~379]{iaroslav_09}, while \eqref{890u4h34f} gives us
\be\label{8769f657fde6}
\gamma\,=\, -\frac{1}{m}\sum_{l=1}^{m}\ln(a+l) -
\frac{1}{m}\sum_{n=1}^\infty\frac{\,(-1)^n\,N_{n,m}(a)\,}{\,n\,}\,,\qquad 
\begin{array}{l}
m\in\mathbbm{N}\\
a>-1 
\end{array}\,.
\ee
This series generalizes \eqref{09384rtjh3} to a large family of series (we have the aforementioned 
series at $a=0$ since $N_{n,1}(0)=\psi_n(0)=G_n$).
For example, setting $a=-\frac12$ and $m=1$ (the mean value between $a=-1$ corresponding to the coefficients $C_n$ and $a=0$ 
corresponding to $G_n$), we have, by virtue of \eqref{897tv87tc}, the following series 
\begin{eqnarray}
&\gamma\,&\displaystyle
=\, \ln2 - \sum_{n=1}^\infty\frac{\,(-1)^n\,N_{n,1}\big(-\frac12\big)\,}{\,n\,}\,
=\,\ln2 - \sum_{n=1}^\infty\frac{\,(-1)^n\,\psi_{n}\big(-\frac12\big)\,}{\,n\,}\,= \notag\\[1mm]
&&\displaystyle
=\ln2 -  0 - \frac{1}{48} - \frac{1}{72} - \frac{223}{23\,040} - \frac{103}{14\,400} - \frac{32\,119}{5\,806\,080}
- \frac{1111}{250\,880} - \ldots\qquad
\end{eqnarray}
relating two fundamental constants $\gamma$ and $\ln2$. This series, however, converges quite slowly, as $\,\sum n^{-\nicefrac{3}{2}}\ln^{-1}\!n\,$
by virtue of \eqref{809q0g08g}. A more rapidly convergent series 
may be obtained by setting large integer $a$. At the same time, it should be noted that precisely for large $a$, first terms of the series may
unexpectedly grow, but after some term they decrease and the series converges. For instance, taking $a=7$, we have
\begin{eqnarray}
&\gamma\,&\displaystyle 
=\,-3\ln2 - \sum_{n=1}^\infty\frac{\,(-1)^n\,\psi_{n}(7)\,}{\,n\,}\,
=\, -3\ln2 + \frac{15}{2} - \frac{293}{24} + \notag\\[1mm]
&&\displaystyle\qquad\qquad\qquad
+ \frac{1079}{72}  - \ldots 
-\frac{8183}{9\,331\,200} - \frac{530\,113}{4\,790\,016\,000} - \ldots\qquad
\end{eqnarray}
Also, the pattern of the sign is not obvious, but for the large index $n$ all the terms should be negative. 
By the way, adding the series with $a=-\frac12$ to that with $a=1$, we eliminate $\ln2$
and thus get a series with rational terms only for Euler's constant
\begin{eqnarray}
&& \gamma\,=\sum_{n=1}^\infty\frac{\,(-1)^{n+1}\,\,}{\,2n\,}
\Big\{\psi_{n}\big(-\tfrac12\big)+ \psi_{n}(1) \Big\}\,
=\,\frac34 - \frac{11}{96} - \frac{1}{72} - \frac{311}{46\,080} - \qquad \notag\\[1mm]
&&\displaystyle\qquad\qquad\qquad\qquad
- \frac{5}{1152}
- \frac{7291}{2\,322\,432}  - \frac{243}{100\,352} - \frac{14\,462\,317}{7\,431\,782\,400}
- \ldots \qquad
\end{eqnarray}
converging at the same rate as $\,\sum n^{-\nicefrac{3}{2}}\ln^{-1}\!n\,$.
Other choices of $a$ are also possible in order to get series with rational terms only for $\gamma$.\footnote{For a list of the most known
series with the rational terms only for Euler's constant, see e.g.~\cite[p.~379]{iaroslav_09}. From the historical viewpoint
it may also be interesting to note that series (36) from \cite[p.~380]{iaroslav_09}, in its second form,
was also given by F.~Franklin already in 1883 in a paper read at a meeting of the University Mathematical Society \cite{franklin_01}. 
Moreover, this series, of course, may be even much older since it is obtained by a quite elementary technique.}
In fact, it is not difficult to show that we can eliminate the logarithm by properly choosing $a$, namely
\be\label{093fujen4t}
\gamma\,=\sum_{n=1}^\infty\frac{\,(-1)^{n+1}\,\,}{\,2n\,}\Big\{\psi_{n}(a)+ \psi_{n}\Big(-\frac{a}{1+a}\Big) \!\Big\}\,,
\qquad a>-1 \,,
\ee
which, at $\,a\in\mathbbm{Q}\,$, represents a huge family of series with rational terms only for Euler's constant.
This series converges at the same rate as $\,\sum n^{-a-2}\ln^{-1}\!n\,$ for $-1<a\leqslant0$ and
$\,\sum n^{-\frac{a+2}{a+1}}\ln^{-1}\!n\,$ for $a\geqslant0$, except for the integer values of $a$ for which $\,\ln^{-1}\!n\,$
should be replaced by $\,\ln^{-2}\!n\,$. In other words, the rate of convergence of \eqref{093fujen4t} 
cannot be worse than
$\,\sum n^{-1-\varepsilon}\ln^{-1}\!n\,$, where $\varepsilon$ is a positive however small parameter, and better than 
$\,\sum n^{-2}\ln^{-2}\!n\,$.
More generally, from \eqref{8769f657fde6} it follows that if $\,a_1,\ldots, a_k\,$ 
and $m$ are chosen so that  $\,(1+a_1)_m\cdots(1+a_k)_m=1\,$, then
\be
\gamma\,=\,\frac{1}{\,m\,k\,}\!\sum_{n=1}^\infty\frac{\,(-1)^{n+1}\,\,}{\,n\,}\sum_{l=1}^k N_{n,m}(a_l)\,,
\qquad a_1,\ldots\,, a_k>-1 \,.
\ee
Furthermore, if for some $\,q_1,\ldots, q_k\,$ and $m$, the quantities 
$\,a_1,\ldots, a_k\,$ are chosen so that  $\,(1+a_1)^{q_1}_m\cdots(1+a_k)^{q_k}_m=1\,$,
then we have a more general formula
\be
\gamma\,=\,\frac{1}{\,m\,(q_1+\ldots+q_k)\,}
\!\sum_{n=1}^\infty\frac{\,(-1)^{n+1}\,\,}{\,n\,}\sum_{l=1}^k q_l\, N_{n,m}(a_l)\,,
\qquad a_1,\ldots\,, a_k>-1 \,,
\ee
which is the most complete 
generalization of the Fontana--Mascheroni series \eqref{09384rtjh3}.

Analogously, one can obtain the series expansions for $\gamma$ from \eqref{kih2g3o8dgg2ikhbde}. 
Indeed, putting for simplicity $v=1$ in the latter expression yields
\begin{eqnarray}\notag 
&&\gamma\,= -\frac{2}{\,m+2a\,}
\left\{\ln\Gamma(a+1) -\frac12\ln2\pi + \frac12 
+ \frac{1}{\,m\,}\!\sum_{n=1}^{m-1} (m-n)\ln(a+n) \, + \right. \qquad\qquad\\[1mm]
&&\qquad\qquad\qquad\qquad\qquad\qquad\qquad\qquad\qquad
\left.+ \, \frac{1}{\,m\,}\!\sum_{n=1}^\infty\frac{(-1)^n\,N_{n+1,m}(a)\,}{n}\right\}\,,
\end{eqnarray}
where $m\in\mathbbm{N}$ and $\Re a>-1$. For $m=1$ the sum in the middle should be taken as zero, so that
\be\label{796gb876v}
\gamma\,= -\frac{2}{\,1+2a\,}
\left\{\ln\Gamma(a+1) -\frac12\ln2\pi + \frac12 +  
\sum_{n=1}^\infty\frac{(-1)^n\,\psi_{n+1}(a)\,}{n}\right\}\,,\qquad \Re a>-1\,.
\ee
At integer and demi--integer values of $a$, we have a quite simple expression for Euler's constant $\gamma$,
which does not contain the $\Gamma$-function; conversely, it may also be regarded as yet another series
for $\ln\Gamma(z)$.
Another interesting consequence of this formula is that it 
readily permits to obtain an interesting series for the digamma function. By multiplying both sides of this formula by $\,m+2a\,$
and by calculating the derivative of the resulting expression with respect to $a$, see \eqref{fhqgg6754r}, we obtain 
\be
\Psi(a+1)\,=\, -\gamma -   
\,\frac{1}{\,m\,}\!\sum_{n=1}^{m-1}\!\frac{\,m-n\,}{a+n}\,-
\,\frac{1}{\,m\,}\!\sum_{n=1}^\infty\frac{(-1)^n\,}{n}
\left\{\!\binom{a+m}{n+1}-\binom{a}{n+1}\!\right\}\,,\qquad
\ee
where $m=2,3,4,\ldots\,$ and $\Re a >-1$.
At $m=1$, we directly get the result from \eqref{796gb876v}
\be
\Psi(a+1)\,=\, -\gamma -   
\sum_{n=1}^\infty\frac{(-1)^n\,}{n}\frac{d\psi_{n+1}(a)}{da}\,=\, -\gamma -   
\sum_{n=1}^\infty\frac{(-1)^n\,}{n}\binom{a}{n}\,,\qquad \Re a >-1\,,
\ee
the series which is sometimes attributed to Stern \cite[p.~251]{norlund_02}.

It is also possible to deduce series expansions with rational terms for Euler's constant from series \eqref{h9843dh2edf}. 
Putting $v=1$, we obtain the following series
\begin{eqnarray}\notag 
&&\label{4098u4rfj3}
\gamma\,=\, \ln2\pi - 2 - 2\!\sum_{n=1}^\infty\frac{\,(-1)^n\,G_{n}^{(2)}\,}{\,n\,}\,=
\, \ln2\pi - 2 + \frac{2}{3} + \\
&& \qquad\qquad\qquad\qquad\qquad
+ \frac{1}{24}+ \frac{7}{540} + \frac{17}{2880}+ \frac{41}{12\,600} 
+ \frac{731}{362\,880} + \ldots \qquad
\end{eqnarray}
converging at the same rate as $\sum n^{-2}\ln^{-3} n$ (see Nota Bene on p.~\pageref{hg8fgvikhvgc6}). 
It is interesting that the numerators of this series, except its first term, coincide with those of the third row of the inverse
Akiyama-Tanigawa algorithm from $1/n$, see OEIS A193546 while its denominator does not seem to be known to the OEIS.
In fact, the reader may easily verify that all these series are new, and at the moment of writing of this paper were not
known to the OEIS [except \eqref{4098u4rfj3}].

\begin{corollary}
The generalized Maclaurin coefficients $\delta_m(v)$ of the regular function $\,\zeta(s,v)-(s-1)^{-1}\,$
admit the following series representations
\begin{eqnarray}\label{hgce8924h}
&\displaystyle\delta_m(v)\, =\,f_m(v)+\sum_{n=0}^\infty\big| G_{n+1}\big
|\sum_{k=0}^{n} (-1)^k \binom{n}{k}\ln^m (k+v) \,, \\[3mm]
\displaystyle
&\displaystyle\delta_m(v)\,=\,f_m(v-1) -\sum_{n=0}^\infty C_{n+1}
\sum_{k=0}^{n} (-1)^k \binom{n}{k}\ln^m (k+v)\,, \label{jhc20n3d2d} \\[3mm]
\displaystyle
&\displaystyle\delta_m(v)\,=\, \frac{1}{\,r\,}\!\sum_{l=0}^{r-1} f_m(v+a+l) + 
\frac{1}{\,r\,}\!\sum_{n=0}^\infty (-1)^n\,N_{n+1,r}(a)
\sum_{k=0}^{n} (-1)^k \binom{n}{k}\ln^m (k+v)\,, \notag \\[-3mm]
\label{lkjwehf3890b}
\end{eqnarray}
where we denoted
\be\notag
f_m(v)\equiv(-1)^m m!\left\{1-v-v\sum_{k=1}^m(-1)^k\frac{\ln^k v}{k!} \right\}
\ee
for brevity. 
\end{corollary}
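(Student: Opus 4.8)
The plan is to follow the method of Corollary~\ref{dc243d0jhxhdf}, simply replacing Ser's formula \eqref{c289hdnwe3} by the three Hurwitz-type expansions \eqref{jxh293nbwes}, \eqref{jx20nsw} and \eqref{98y6b796}. By analogy with the definition \eqref{uyh0hf5}, the generalized coefficients $\delta_m(v)$ are the Maclaurin coefficients of the entire function $\,\zeta(s,v)-(s-1)^{-1}$, i.e.
\be\notag
\zeta(s,v)\,=\,\frac{1}{\,s-1\,}+\Big(\tfrac32-v\Big)+\sum_{m=1}^\infty\frac{(-1)^m\delta_m(v)}{m!}\,s^m\,,\qquad s\in\mathbbm{C}\setminus\!\{1\}\,,
\ee
the constant term being $\zeta(0,v)+1=\tfrac32-v$ (its precise value is irrelevant, since only the coefficients of $s^m$ with $m\in\mathbbm{N}$ are sought). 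First I would subtract $(s-1)^{-1}$ from each of the three representations, expand the right-hand side into a power series in $s$ around $s=0$, and then equate the coefficients of $s^m$.

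The only genuine computation needed is the Maclaurin expansion of the ``pole part''. Using $\,w^{1-s}-1=(w-1)+w\sum_{j\geqslant1}\frac{(-1)^j\ln^j w}{j!}\,s^j\,$ together with $\,(s-1)^{-1}=-\sum_{i\geqslant0}s^i\,$, the Cauchy product yields, for every $m\in\mathbbm{N}$, the coefficient of $s^m$ in $\,(w^{1-s}-1)/(s-1)$ as $\,(1-w)-w\sum_{j=1}^m\frac{(-1)^j\ln^j w}{j!}$; multiplying this by $(-1)^m m!$ reproduces \emph{exactly} the quantity $f_m(w)$ defined in the statement. In parallel, from $\,(k+v)^{-s}=\sum_{m\geqslant0}\frac{(-1)^m\ln^m(k+v)}{m!}\,s^m\,$, the coefficient of $s^m$ inside each finite difference, once multiplied by $(-1)^m m!$, is simply $\ln^m(k+v)$.

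With these two expansions the three formulas follow almost mechanically. For \eqref{hgce8924h} I rewrite the pole term of \eqref{jxh293nbwes} as $(v^{1-s}-1)/(s-1)$ after subtracting $(s-1)^{-1}$, so that it contributes $f_m(v)$, while the Gregory sum contributes the series with $|G_{n+1}|$. For \eqref{jhc20n3d2d} the same step applied to \eqref{jx20nsw} turns the pole term into $((v-1)^{1-s}-1)/(s-1)$, hence $f_m(v-1)$, the minus sign in front of the $C_{n+1}$ series being inherited directly from \eqref{jx20nsw}. For \eqref{lkjwehf3890b} I start from \eqref{98y6b796} with its summation length renamed $r$, write $(s-1)^{-1}=\frac{1}{r(s-1)}\sum_{n=0}^{r-1}1$ so that the pole block becomes $\frac{1}{r(s-1)}\sum_{n=0}^{r-1}\big[(v+a+n)^{1-s}-1\big]$, and read off $\frac1r\sum_{l=0}^{r-1}f_m(v+a+l)$ alongside the $N_{n+1,r}(a)$ series. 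As a consistency check, setting $v=1$ kills $f_m$ (since $\ln1=0$) and recovers \eqref{d23d2rgrrew}.

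I expect the main obstacle to be not the algebra but the justification of the termwise Maclaurin expansion, namely that coefficient extraction may be interchanged with the infinite summation over $n$. This is legitimate because, after removal of the pole, each of \eqref{jxh293nbwes}--\eqref{98y6b796} is a locally uniformly convergent series of entire functions in a neighborhood of $s=0$ --- a consequence of the global convergence of these expansions together with the holomorphy of $\,(s-1)\zeta(s,v)\,$ on the whole plane, so that Weierstrass's theorem permits termwise differentiation. Once that interchange is granted, the remaining Cauchy-product bookkeeping sketched above delivers \eqref{hgce8924h}, \eqref{jhc20n3d2d} and \eqref{lkjwehf3890b} at once.
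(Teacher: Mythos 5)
Your proposal is correct and follows essentially the same route as the paper: the paper defines $\delta_m(v)$ by the expansion $\zeta(s,v)=(s-1)^{-1}+\tfrac32-v+\sum_{m\geqslant1}\tfrac{(-1)^m\delta_m(v)}{m!}s^m$, notes $\delta_m(v)=(-1)^m\{\zeta^{(m)}(0,v)+m!\}$, and obtains the three formul\ae\ by ``direct differentiation'' of \eqref{jxh293nbwes}--\eqref{98y6b796}, which is exactly your coefficient extraction at $s=0$. Your explicit verification that the coefficient of $s^m$ in $(w^{1-s}-1)/(s-1)$, times $(-1)^m m!$, equals $f_m(w)$ supplies the one computation the paper leaves implicit, and it is correct.
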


\begin{proof}
Generalizing expansion \eqref{uyh0hf5} to the Hurwitz $\zeta$-function,
we may introduce $\delta_m(v)$, $m\in\mathbbm{N}$, 
$v\in\mathbbm{C}\setminus\!\{0,-1,-2,\ldots\}$, as the coefficients in the expansion
\be
\label{hgx23g}
\zeta(s,v)\,=\,\frac{1}{\,s-1\,} + \frac{3}{2} - v + \sum_{m=1}^\infty\frac{(-1)^m\delta_m(v)
}{m!}\,s^m \,, \qquad s\in\mathbbm{C}\setminus\!\{1\}\,.
\ee
It is, therefore, not difficult to see that $\,\delta_m(v)=(-1)^m\big\{\zeta^{(m)}(0,v) +m!\big\}\,$.
The desired formul\ae~are obtained by a direct differentiation of \eqref{jxh293nbwes}--\eqref{98y6b796}
respectively. 
Similarly to the generalized Stieltjes constants, functions $\delta_m(v)$ enjoy 
a recurrence relation $\,\delta_m(v+1)=\delta_m(v)-\ln^m \!v\,$, which may be used
to rewrite \eqref{hgce8924h}--\eqref{jhc20n3d2d} in a slightly different form if necessary. 
\end{proof}

\vspace{1em}
\noindent{\bfseries Remark 3, related to the logarithm of the ${\boldsymbol\Gamma}$-function.}
Recalling that $\,\delta_1(v)=-\ln\Gamma(v) + \frac12\ln2\pi-1\,$ and 
noticing that $\,f_1(v)=v-1-v\ln{v}\,$, gives us these three series expansions
\be\label{b45wthhsa}	
\ln\Gamma(v)\, =\,v\ln v - v + \frac12\ln2\pi - \sum_{n=0}^\infty\big| G_{n+1}\big
|\sum_{k=0}^{n} (-1)^k \binom{n}{k}\ln(k+v) \,, 
\ee
where $\,\Re v>0\,$,
\be\label{g34tg66g} 	
\ln\Gamma(v)\,=\, (v-1)\ln(v-1) -v+1 + \frac12\ln2\pi  + \sum_{n=0}^\infty C_{n+1}
\sum_{k=0}^{n} (-1)^k \binom{n}{k}\ln(k+v) \,,   \quad
\ee
where $\,\Re v>1\,$, and 
\begin{eqnarray}
&&\displaystyle\ln\Gamma(v)\,=\, \frac{1}{\,r\,}\!\sum_{l=0}^{r-1} (v+a+l)\ln(v+a+l) 
- v - a - \frac{r}{2}+ \frac12\ln2\pi + \qquad\label{893y4hfpdf}\\[1mm]
&&\displaystyle\qquad\qquad\qquad
  + \frac12 - \frac{1}{\,r\,}\!\sum_{n=0}^\infty (-1)^n\,N_{n+1,r}(a)
\sum_{k=0}^{n} (-1)^k \binom{n}{k}\ln(k+v) \,,\qquad\notag
\end{eqnarray}
$\Re v>-\Re{a}\,$, $\Re a >-1$, $\,r\in\mathbbm{N}\,$
for the logarithm of the $\Gamma$-function. 
First of these representations is equivalent to 
a little-known formula for the logarithm of the $\Gamma$-function, which appears in epistolary exchanges between Charles Hermite and Salvatore Pincherle dating back to 1900
\cite[p.~63, two last formul\ae]{hermite_01}, \cite[vol.~IV, p.~535, third and fourth formul\ae]{hermite_02}, 
while the second and the third representations seem to be novel. 

Note also that the parameter $a$, in all the expansions 
in which it appears, plays the role of the ``rate of convergence'': the greater this parameter,
the faster the convergence, especially if $a$ is an integer.

\vspace{1em}
\noindent{\bfseries Remark 4, related to some similar expansions containing the same finite differences.}\label{g897tvt6y}
It seems appropriate to note that there exist other expansions of the same nature, which merit to be mentioned here. 
For instance
\begin{eqnarray}
&&\hspace{-5mm}\ln\Gamma(v+x)\,=\,\ln\Gamma(v) + \sum_{n=0}^\infty (-1)^n\binom{x}{n+1}\!
\sum_{k=0}^n (-1)^k \binom{n}{k}\ln(k+v) \,,\label{hgd28}\\[3mm]
&&\hspace{-5mm}\ln\Gamma(v)\,=\,-v+\frac{1}{2}+\frac{1}{2}\ln2\pi +\sum_{n=0}^\infty\frac{1}{\,n+1\,}
\!\sum_{k=0}^n (-1)^k \binom{n}{k}(k+v)\ln(k+v) \,, \qquad\\[3mm]
&&\hspace{-5mm}\Psi(v)\,=\,\sum_{n=0}^\infty\frac{1}{\,n+1\,}
\!\sum_{k=0}^n (-1)^k \binom{n}{k}\ln(k+v) \,, \label{348rfy3hfbf3b}\\[3mm]
&&\hspace{-5mm}\gamma_m(v)\,=\,-\frac{1}{\,m+1\,}\!\sum_{n=0}^\infty\frac{1}{\,n+1\,}
\!\sum_{k=0}^n (-1)^k \binom{n}{k}\ln^{m+1}(k+v)\,,
\qquad m\in\mathbbm{N} \,,
\end{eqnarray}
see e.g.~\cite[p.~59]{hermite_01}, \cite[vol.~IV, p.~531]{hermite_02}, \cite[p.~251]{norlund_02}, 
\cite{connon_06}, \cite{connon_09}.
The three latter formul\ae~are usually deduced from Hasse's series \eqref{jnd2h93ndd}, but they equally may be obtained from a more general formula
\be\label{kjh982hed}
\frac{dF(v)}{dv}\,=\sum_{n=0}^\infty \!\frac{(-1)^n}{\,n+1\,}\,\Delta^{n+1} F(v)\,
=-\sum_{n=1}^\infty \!\frac{1}{\,n\,}\sum_{k=0}^{n} (-1)^k \binom{n}{k}F(v+k)\,,
\ee
which is a well--known result in the theory of finite differences,
see e.g.~\cite[pp.~240--242]{norlund_02}. Moreover, Hasse's series itself \eqref{jnd2h93ndd} is a simple consequence
of this formula. Putting $F(v)=\zeta(s,v)$, we actually have
\be
\Delta\zeta(s,v)=\zeta(s,v+1)-\zeta(s,v)=-v^{-s}\quad \text{and}\quad 
\frac{\,\partial\zeta(s,v)\,}{\partial v}=-s\,\zeta(s+1,v)\,.\quad
\ee
Hence, \eqref{kjh982hed} reads
\be
s\,\zeta(s+1,v)\,=\sum_{n=0}^\infty \!\frac{(-1)^n}{\,n+1\,}\,\Delta^{n} v^{-s}
=\sum_{n=0}^\infty \!\frac{(-1)^n}{\,n+1\,}\sum_{k=0}^{n} (-1)^k \binom{n}{k}(v+k)^{-s}.
\ee
Rewriting this expression for $s-1$ instead of $s$ and then dividing both sides by $s-1$
immediately yield Hasse's series \eqref{jnd2h93ndd}.

It is also evident that expressions containing the fractions $\,\frac{1}{n+1}\,$ are more simple than those
with the coefficients $G_n$, $C_n$ or $N_{n,m}(a)$. Applying \eqref{kjh982hed} to the Dirichlet series $\zeta(s,v,\boldsymbol{u})$
introduced in Theorem \ref{ujihd97823hdbe} and proceeding analogously, we may obtain without difficulty the following result
\be
\zeta(s,v,\boldsymbol{u})\,=\,\frac{1}{\,s-1\,}\!
\sum_{n= 0}^\infty \frac{(-1)^n}{\,n+1\,}\, \Delta^{n}\zeta(s-1,v+1, \Delta \boldsymbol{u})
+ \frac{u_0}{\,s-1\,}\!\sum_{n=0}^\infty \frac{(-1)^n}{\,n+1\,}\, \Delta^{n} v^{1-s}   
\ee
which is the analog of  \eqref{73dyd2odhedy}. Considering again $\zeta(s,v,\boldsymbol{u})$
with $u_n=1+n$, we easily derive
the following functional relationship 
\be\label{gh87tgkgft}
(v-1)\,\zeta(s,v) \,=\,
\frac{s-2}{\,s-1\,}\,\zeta(s-1,v)\,
- \frac{1}{\,s-1\,}\!\sum_{n=0}^\infty \frac{(-1)^n}{\,n+2\,} \,\Delta^{n} v^{1-s} \,,
\qquad s\in\mathbbm{C}\setminus\!\{1,2\}\,,
\ee
or explicitly
\be\label{jhdf29h3r2}
\zeta(s,v) =\frac{1}{(v-1)\,(s-1)}\!
\left\{\! (s-2)\,\zeta(s-1,v)
- \!\sum_{n=0}^\infty \!\frac{1}{\,n+2\,} \! \sum_{k=0}^n (-1)^k
\binom{n}{k}(k+v)^{1-s}\!\right\}\!,
\ee
$ s\in\mathbbm{C}\setminus\!\{1,2\}\,,$
which is, in some senses, analogous to \eqref{9u8y943dh34hb3} and which is  
yet another generalization of Ser's formula \eqref{hdf398hd}.\footnote{We precisely obtain the latter
as a particular case of \eqref{gh87tgkgft} when $v=1$.} Another functional relationship for $\zeta(s)$ may be obtained
by setting $v=\frac12$, because $\zeta\big(s,\frac12\big)=\big(2^s-1\big)\zeta(s)$ [this obviously 
also applies to other formul\ae~we obtained for $\zeta(s,v)$]. 
Now, multiply both sides of the latter equality by $s-1$ and expand it into the Taylor series about $(s-1)$.\footnote{The $\zeta$-functions
should be expanded into the Laurent series accordingly to \eqref{dhd73vj6s1b}.} 
Equating the coefficients of $(s-1)^1$ yields the following relationship between the gamma and the digamma functions
\be\label{84rfh3nbd}
\Psi(v)\,=\,\frac{1}{\,v-1\,}\!
\left\{\ln\Gamma(v) + v -\frac12\ln2\pi -\frac12 -
\sum_{n=0}^\infty \frac{1}{\,n+2\,} \sum_{k=0}^n (-1)^k \binom{n}{k}\ln(k+v)\right\}.
\ee
Similarly, by equating the coefficients of $(s-1)^{m+1}$ we have for the generalized Stieltjes constants 
\begin{eqnarray}
&&\displaystyle \gamma_m(v)\,=\,\frac{1}{\,v-1\,}
\left\{(-1)^m \zeta^{(m)}(0,v) - \frac{ (-1)^m}{\,m+1\,}\,\zeta^{(m+1)}(0,v)\, 
+  \right.\label{f786rc87fcoi}\\[2mm]
&&\displaystyle \qquad \qquad \qquad \qquad \qquad
\left. + \,\frac{1}{\,m+1\,}\!\sum_{n=0}^\infty \frac{1}{\,n+2\,}
\sum_{k=0}^{n} (-1)^k \binom{n}{k}\ln^{m+1} (k+v) \right\}\,,\notag
\end{eqnarray}
where $m\in\mathbbm{N}$, $\Re v >0$ and where the right part should be regarded as a limit when $v=1$.\footnote{It is interesting to compare this formula with \eqref{ilu42798hd2ibd}.}

Furthermore, proceeding similarly and using formula (67) from \cite[p.~241]{norlund_02}, 
one may obtain the following expansions
\begin{eqnarray}
&&\hspace{-12mm}\zeta(s)\,=\,\frac{1}{\,s-1\,}\sum_{n=0}^\infty H_{n+1}\!\sum
_{k=0}^n (-1)^k \binom{n}{k}(k+2)^{1-s} \,,\\[3mm]
&&\hspace{-12mm}\zeta(s,v-1)\,=\,\frac{1}{\,s-1\,}\sum_{n=0}^\infty H_{n+1}\!\sum
_{k=0}^n (-1)^k \binom{n}{k}(k+v)^{1-s} \,,\\[3mm]
&&\hspace{-12mm}\ln\Gamma(v-1)\,=\,-v+\frac{3}{2}+\frac{1}{2}\ln2\pi +\sum_{n=0}^\infty  H_{n+1}
\!\sum_{k=0}^n (-1)^k \binom{n}{k}(k+v)\ln(k+v) \,,\\[3mm]
&&\hspace{-12mm}\Psi(v-1)\,=\,\sum_{n=0}^\infty H_{n+1}
\!\sum_{k=0}^n (-1)^k \binom{n}{k}\ln(k+v)  \label{98y43fh32onrf4} \,,\\[3mm]
&&\hspace{-12mm}\gamma\,=\,-\!\sum_{n=0}^\infty H_{n+1}
\!\sum_{k=0}^n (-1)^k \binom{n}{k}\ln(k+2) \,, \\[3mm]
&&\hspace{-12mm}\gamma_m\,=\,-\frac{1}{\,m+1\,}\!\sum_{n=0}^\infty H_{n+1}
\!\sum_{k=0}^n (-1)^k \binom{n}{k}\ln^{m+1}(k+2) \,,\\[3mm]
&&\hspace{-12mm}\gamma_m(v-1)\,=\,-\frac{1}{\,m+1\,}\!\sum_{n=0}^\infty H_{n+1}
\!\sum_{k=0}^n (-1)^k \binom{n}{k}\ln^{m+1}(k+v) \,,
\end{eqnarray}
where $\,\Re v>1\,$, $\,m\,$ is a natural number and 
$\,H_n=1+\frac12+\frac13+\ldots+\frac1n\,$ is the $n$th harmonic number.
Using the corresponding recurrence relations, these
formul\ae~may also be written for $v$ instead of $v-1$ in the left part. 
Besides, for the Dirichlet series $\zeta(s,v,\boldsymbol{u})$ from Theorem \ref{ujihd97823hdbe}, we also have
\begin{eqnarray}
&& 
\zeta(s,v-1,\boldsymbol{u})\,=\,\frac{1}{\,s-1\,}\!
\sum_{n= 0}^\infty (-1)^n H_{n+1} \, \Delta^{n}\zeta(s-1,v+1, \Delta \boldsymbol{u}) \,+ \notag\\
&&\qquad\qquad\qquad\qquad\qquad\qquad\qquad\qquad\qquad \qquad \notag
+ \frac{u_0}{\,s-1\,}\!\sum_{n=0}^\infty (-1)^n H_{n+1} \, \Delta^{n} v^{1-s} .  \qquad \qquad 
\end{eqnarray}
If, for example, $\,u_n=n+1\,$, then
\begin{eqnarray}
&& \zeta(s,v-1) \,=\,\frac{1}{\,(v-2)\,(s-1)\,}
\left\{\vphantom{\sum_{n=0}^\infty} 
(v-1)^{1-s} +\,
(s-2)\,\zeta(s-1,v-1)\,-   \right. \qquad\qquad\qquad\label{893u4f3jmncf4}\\[1mm]
&&\qquad\qquad\qquad\qquad\qquad\qquad\qquad\qquad\notag
\left. - \sum_{n=0}^\infty H_{n+2} \sum_{k=0}^n (-1)^k \binom{n}{k}(k+v)^{1-s}\right\}\,,\qquad
\end{eqnarray}
$s\in\mathbbm{C}\setminus\!\{1,2\}$, $\Re v>1$. By expanding 
both sides of this formula into the Laurent series about $s=1$, we get
\begin{eqnarray}
&&\Psi(v-1)\,=\,\frac{1}{\,v-2\,}
\left\{ \ln\Gamma(v)+v-\frac{3}{2}-\frac{1}{2}\ln2\pi - \vphantom{\sum_{k=0}^n (-1)^k} \right.\\[1mm]
&&\displaystyle\qquad\qquad\qquad\qquad\qquad\qquad\qquad \notag
\left.  - \sum_{n=0}^\infty  H_{n+2}
\!\sum_{k=0}^n (-1)^k \binom{n}{k}\ln(k+v)\right\},\\[3mm]
&&\displaystyle \gamma_m(v-1)\,=\,\frac{1}{\,v-2\,}
\left\{-\frac{\,\ln^{m+1}(v-1)\,}{m+1} \,+\, (-1)^m \zeta^{(m)}(0,v-1) \, - \, 
  \right.\label{f786rc87fcoi}\\[2mm]
&&\displaystyle \quad 
\left. - \,\frac{ (-1)^m}{\,m+1\,}\,\zeta^{(m+1)}(0,v-1) \, + \,\frac{1}{\,m+1\,}\!\sum_{n=0}^\infty H_{\,n+2\,}
\sum_{k=0}^{n} (-1)^k \binom{n}{k}\ln^{m+1} (k+v) \right\}\notag
\end{eqnarray}
holding for $\Re v>1$.
If, in addition, we set $\,v=2\,$ in \eqref{893u4f3jmncf4}, we get
\be
\zeta(s)\,=\,\frac{1}{\,s-1\,}\left\{-1 + \sum_{n=0}^\infty H_{n+2} \sum
_{k=0}^n (-1)^k \binom{n}{k}(k+2)^{-s}\right\}\,,\qquad s\in\mathbbm{C}\setminus\!\{1\}\,,
\ee
and hence
\begin{eqnarray}
&&\gamma\,=\,-\!\sum_{n=0}^\infty H_{n+2}
\!\sum_{k=0}^n (-1)^k \binom{n}{k}\frac{\,\ln(k+2)\,}{k+2}\,,\\[3mm]
&&\gamma_m\,=\,-\frac{1}{\,m+1\,}\!\sum_{n=0}^\infty H_{n+2}
\!\sum_{k=0}^n (-1)^k \binom{n}{k}\frac{\,\ln^{m+1}(k+2)\,}{k+2}\,,
\qquad m\in\mathbbm{N} \,.
\end{eqnarray}

Finally, differentiating \eqref{84rfh3nbd} with respect to $v$ and simplifying the sum with the binomial coefficients yields a formula for the
trigamma function $\Psi_1(v)$
\be\label{opi8rj34r}
\Psi_1(v)\,=\,\frac{1}{\,v-1\,}\left\{1-
\sum_{n=0}^\infty \frac{n!}{\,(n+2)\cdot (v)_{n+1}\,} \right\}\,,\quad \Re v>0\,,
\ee
where for $v=1$ the right part should be calculated via an appropriated limiting procedure.
Similarly, from \eqref{98y43fh32onrf4} we get 
\be
\Psi_1(v-1)\,=
\sum_{n=0}^\infty \frac{\,H_{n+1}\,n!\,}{\,(v)_{n+1}\,} \,,\quad \Re v>1\,.
\ee
which may also be written as 
\be
\Psi_1(v)\,=
\,v\! \sum_{n=0}^\infty \frac{\,H_{n+1}\,n!\,}{\,(v)_{n+2}\,} \,,\quad \Re v>0\,,
\ee
if we put $v+1$ instead of $v$.
These formul\ae~may be compared with
\be
\Psi_1(v)\,=
\sum_{n=0}^\infty \frac{n!}{\,(n+1)\cdot (v)_{n+1}\,} \,,\quad \Re v>0\,,
\ee
which may be readily obtained from \eqref{348rfy3hfbf3b} by following the same line of reasoning.

\begin{theorem}\footnote{We place this theorem
after the other results because, on the one hand, it logically continues the previous remark, and on the other hand, this result merits to be
presented as a separate theorem, rather than a simple formula in the text.}
The function $\zeta(s,v)$ may be represented by the following series with the Stirling numbers 
of the first kind
\begin{eqnarray}
&\zeta(s,v)& = \,\frac{k!}{\,(s-k)_k\,}   \label{94fnm4fonk3e}
\!\sum_{n=0}^\infty \frac{\,\big|S_1(n+k,k)\big|\,}{(n+k)!}\!\!\sum_{l=0}^{n+k-1}\! (-1)^l \binom{n+k-1}{l} (l+v)^{k-s} \\[2mm]
&&
= \,\frac{k!}{\,(s-k)_k\,}   
\!\sum_{n=0}^\infty \frac{\,\big|S_1(n+k,k)\big|\,}{(n+k)!}\!
\sum_{r=0}^{k-1}(-1)^r \binom {k-1}{r}
\sum_{l=0}^{n}\! (-1)^{l} \binom{n}{l} (l+r+v)^{k-s}  \notag
\end{eqnarray}
where $k\in\mathbbm{N}$.
It may also be written in terms of the more simple numbers, especially for small $k$, see \eqref{uf8peorkjg4e0jgmfg}
hereafter. For example, for $k=1$, we obtain Hasse's series \eqref{jnd2h93ndd}; for $k=2$, we have
\begin{eqnarray}
&\zeta(s,v)& =\,\frac{\, 2\,}{\,(s-1)(s-2)\,} 
\!\sum_{n=0}^\infty \!\frac{\,H_{n+1}\,}{\,n+2\,}
\!\sum_{l=0}^{n+1} (-1)^l \binom{n+1}{l} (l+v)^{2-s} \label{erve8g}\\[2mm]
&& =\,\frac{\, 2\,}{\,(s-1)(s-2)\,} \notag
\!\sum_{n=0}^\infty \!\frac{\,H_{n+1}\,}{\,n+2\,}
\!\sum_{l=0}^{n} (-1)^l \binom{n}{l}
\Big\{(l+v)^{2-s} - (l+v+1)^{2-s}\Big\}\,,
\end{eqnarray}
for $k=3$
\be\notag
\zeta(s,v)\, =\,\frac{\, 3\,}{\,(s-1)(s-2)(s-3)\,} 
\!\sum_{n=0}^\infty \!\dfrac{\,H^2_{n+2} - H^{(2)}_{n+2}\,}{n+3}
\!\sum_{l=0}^{n+2} (-1)^l \binom{n+2}{l} (l+v)^{3-s},
\ee
and so on, where $H^{(s)}_{n}=1^{-s}+2^{-s}+3^{-s}+\ldots+n^{-s}$ are the generalized
harmonic numbers, also known as the incomplete $\zeta$-function. \label{ufh3984fbnf}
\end{theorem}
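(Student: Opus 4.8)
The plan is to view \eqref{94fnm4fonk3e} as the $k$th-order counterpart of the difference formula \eqref{kjh982hed} and to obtain it from the operational identity linking the derivative and the forward-difference operators. Writing $E$ for the shift operator $Ef(v)=f(v+1)$, so that $\Delta=E-1$, one has $d/dv=\ln(1+\Delta)$ and hence $\tfrac{1}{k!}\,d^k/dv^k=\tfrac{1}{k!}[\ln(1+\Delta)]^k$. Expanding the right-hand side by the exponential generating function of the Stirling numbers of the first kind (the very series used in \eqref{8u4rfhq2me}), namely $\tfrac1{k!}[\ln(1+z)]^k=\sum_{n\geqslant k}S_1(n,k)z^n/n!$, and reindexing $n\mapsto n+k$, I would record the master identity
\be\notag
\frac{1}{k!}\,\frac{d^k F(v)}{dv^k}=\sum_{n=0}^\infty\frac{S_1(n+k,k)}{(n+k)!}\,\Delta^{n+k}F(v)=\sum_{n=0}^\infty\frac{(-1)^n\bigl|S_1(n+k,k)\bigr|}{(n+k)!}\,\Delta^{n+k}F(v),
\ee
where $S_1(n+k,k)=(-1)^n|S_1(n+k,k)|$ has been used. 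For $k=1$ this collapses to \eqref{kjh982hed}, which is precisely the point from which the footnote says the theorem departs.

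Next I would set $F(v)=\zeta(s,v)$. Two elementary ingredients enter: termwise differentiation of $\zeta(s,v)=\sum_{j\geqslant0}(j+v)^{-s}$ gives $\partial_v^{\,k}\zeta(s,v)=(-1)^k(s)_k\,\zeta(s+k,v)$, while the recurrence $\zeta(s,v+1)=\zeta(s,v)-v^{-s}$ yields $\Delta^{n+k}\zeta(s,v)=-\Delta^{n+k-1}v^{-s}$, so that by \eqref{98yrbfe45}
\be\notag
\Delta^{n+k}\zeta(s,v)=(-1)^{n+k}\sum_{l=0}^{n+k-1}(-1)^l\binom{n+k-1}{l}(l+v)^{-s}.
\ee
Inserting both into the master identity, the sign $(-1)^n(-1)^{n+k}=(-1)^k$ cancels the $(-1)^k$ from the derivative, and one is left with $\tfrac{(s)_k}{k!}\zeta(s+k,v)=\sum_{n\geqslant0}\tfrac{|S_1(n+k,k)|}{(n+k)!}\sum_{l=0}^{n+k-1}(-1)^l\binom{n+k-1}{l}(l+v)^{-s}$. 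Replacing $s$ by $s-k$, so that $(s)_k\mapsto(s-k)_k$ and the exponent becomes $k-s$, gives exactly the first line of \eqref{94fnm4fonk3e}. As a sanity check, $k=1$ returns Hasse's series \eqref{jnd2h93ndd} since $|S_1(n+1,1)|=n!$, and $k=2$ returns \eqref{erve8g} since $|S_1(n+2,2)|=(n+1)!\,H_{n+1}$.

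The second expression in \eqref{94fnm4fonk3e} is then a purely combinatorial rewriting recording the factorisation $(1-E)^{n+k-1}=(1-E)^{k-1}(1-E)^n$. Applying $(1-E)^{k-1}=\sum_{r=0}^{k-1}(-1)^r\binom{k-1}{r}E^r$ after $(1-E)^n=\sum_{l=0}^n(-1)^l\binom{n}{l}E^l$ to the function $g(x)=x^{k-s}$ at the point $v$ turns the single inner sum over $l\in\{0,\dots,n+k-1\}$ into the announced double sum over $r\in\{0,\dots,k-1\}$ and $l\in\{0,\dots,n\}$, which completes the passage from the first to the second line.

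The real work lies not in the algebra but in the analytic justification, since the operational identity $d^k/dv^k=[\ln(1+\Delta)]^k$ and the interchange of differentiation with the infinite difference expansion are a priori formal. I would make this rigorous by first proving the representation for $\Re s$ large, where the estimate $|S_1(n+k,k)|/(n+k)!\sim(\ln n)^{k-1}/\bigl((k-1)!\,n\bigr)$ together with the smallness of $\Delta^{n+k-1}(\cdot)^{-s}$ secures absolute convergence exactly as for Hasse's series, and then extending to all admissible $s$ by analytic continuation, the factor $(s-k)_k$ absorbing the excluded points $s=1,2,\dots,k$. The convergence of the underlying difference–derivative formula is itself a classical fact of the finite-difference calculus, so the argument rests entirely on tools already employed in the paper.
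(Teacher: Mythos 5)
Your proof is correct, and the application to $\zeta(s,v)$ (the identities $\partial_v^{\,k}\zeta(s,v)=(-1)^k(s)_k\,\zeta(s+k,v)$ and $\Delta^{n+k}\zeta(s,v)=-\Delta^{n+k-1}v^{-s}$, the sign bookkeeping via $\sgn S_1(n+k,k)=(-1)^n$, the shift $s\mapsto s-k$, and the factorisation $\Delta^{n+k-1}=\Delta^{k-1}\Delta^{n}$ for the second line) coincides step for step with the paper. Where you genuinely diverge is in how you reach the master identity $\frac{1}{k!}\,d^kF/dv^k=\sum_{n\geqslant0}\frac{S_1(n+k,k)}{(n+k)!}\,\Delta^{n+k}F(v)$: you get it in one stroke from the operational relation $d/dv=\ln(1+\Delta)$ together with the exponential generating function $\frac{1}{k!}\ln^k(1+z)=\sum_{n\geqslant k}S_1(n,k)z^n/n!$, whereas the paper starts from N{\o}rlund's classical formula $\frac{d^kF(v)}{dv^k}=k\sum_{n\geqslant0}\frac{B_n^{(n+k)}}{(n+k)\,n!}\Delta^{n+k}F(v)$ with higher-order Bernoulli numbers and then spends the bulk of the proof comparing the two generating functions $\{\ln(1+z)/z\}^{s}$ and $\ln^{s}(1+z)/s!$ to establish $B_n^{(n+k)}=\frac{n!\,(k-1)!}{(n+k-1)!}\,S_1(n+k,k)$. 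Your route is shorter and makes the appearance of the Stirling numbers immediate; the paper's route buys, as a by-product, the explicit bridge between $B_n^{(n+k)}$ and $S_1(n+k,k)$ and lets the difference--derivative formula rest on a cited classical result rather than on a formal symbolic expansion of $[\ln(1+\Delta)]^k$ --- which is why your closing remark about needing to legitimise the operator calculus (e.g.\ by first working with $\Re s$ large and continuing analytically, the factor $(s-k)_k$ handling the points $s=1,\dots,k$) is the right thing to add; the paper itself is no more rigorous on this point, as it simply invokes the finite-difference literature for convergence.
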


\begin{proof}
From the theory of finite differences, it is known that
\be\label{987ybh789y}
\frac{d^kF(v)}{dv^k}\,=\,k\!\sum_{n=0}^\infty \!\frac{B^{(n+k)}_n}{\,(n+k)\,n!\,}\,\Delta^{n+k} F(v)\,,
\qquad k\in\mathbbm{N}\,,
\ee
where $\,B^{(n+k)}_n$ are the Bernoulli numbers of higher order,  
see e.g.~\cite[p.~242]{norlund_02}.\footnote{This
formula is actually a generalization of \eqref{kjh982hed}, which we get when $k=1$.}
These numbers are the particular case of the Bernoulli polynomials
of higher order $\,B^{(r)}_n=B^{(r)}_n(0)$, see \eqref{j023jd3ndu}, and for positive integers $r>n$ 
they can be expressed in terms of the Stirling numbers of the first kind. 
On the one hand, we have
\be
\left\{\!\frac{\,\ln(1+z)\,}{z}\!\right\}^{\!s}\,=
\sum_{n=0}^\infty \frac{z^n}{n!}\,\frac{s}{s+n}\,B^{(n+s)}_n\,,\qquad |z|<1\,,\quad s\in\mathbbm{C}\,,
\ee
see e.g.~\cite[p.~244]{norlund_02}, \cite[p.~135]{milne_01}; on the other hand, 
the generating equation for the Stirling numbers of the first kind reads\footnotemark[9]
\be
\frac{\ln^s(1+z)}{s!}\,=\sum_{n=s}^\infty\!\frac{S_1(n,s)}{n!}z^n 
\,=\sum_{n=0}^\infty\!\frac{S_1(n+s,s)}{(n+s)!}z^{n+s}\,, \qquad s\in\mathbbm{N}_0\,.
\ee
Hence, if $s$ is a nonnegative integer, we may equate the coefficients of $z^{n+s}$ in the right parts of two latter formul\ae. 
This yields
\be
B^{(n+s)}_n\,
=\,\frac{\,(s-1)!\, S_1(n+s,s)\,}{\,(n+1)(n+2)\cdots(n+s-1)\,}\,
=\,\frac{\,n!\,(s-1)!\,}{\,(n+s-1)!\,}\, S_1(n+s,s)\,,
\ee
where $n$ and $s$ are both natural numbers, or equivalently
\be
B^{(r)}_n\,=\,\frac{\,n!\,(r-n-1)!\, }{\,(r-1)!\,}\, S_1(r,r-n)\,,\qquad
n,r\in\mathbbm{N}\,, \quad r>n\,.
\ee
Formula \eqref{987ybh789y}, therefore, takes the form
\be\label{hg7yvf42quk}
\frac{d^kF(v)}{dv^k}\,=\,k!\!\sum_{n=0}^\infty \!\frac{S_1(n+k,k)\,}{\,(n+k)!\,}\,\Delta^{n+k} F(v)\,,
\qquad k\in\mathbbm{N}\,,
\ee
where, it is appropriate to remark that 
\be\label{uf8peorkjg4e0jgmfg}
\frac{\,\big|S_1(n+k,k)\big|\,}{\,(n+k)!\,}\,=
\begin{cases}
\,\dfrac{1}{\,n+1\,} \,,  & k=1\\[3mm]
\,\dfrac{\,H_{n+1}}{n+2}\,,& k=2 \\[3mm]
\,\dfrac{\,H^2_{n+2} - H^{(2)}_{n+2}\,}{\,2\,(n+3)} \,,\qquad\quad& k=3 \\[3mm]
\,\ldots\ldots\ldots\ldots\ldots\ldots\ldots\ldots\ldots\ldots\ldots\ldots   \\[3mm]
\,\dfrac{\,P_{k-1}\Big(H_{n+k-1}^{(1)}, -H_{n+k-1}^{(2)},\ldots, 
(-1)^{k}H_{n+k-1}^{(k-1)}\Big)\,}{\,n+k\,} \,,\qquad\quad& k\in\mathbbm{N} \\[3mm] 
\end{cases}
\ee
where $P_{k-1}$ are the modified Bell polynomials
defined by the following generating function
\be
\exp\!\left(\sum\limits_{n=1}^{\infty}x_n \frac{z^n}{n}\right) = 
\sum\limits_{m=0}^{\infty}P_m(x_1,\cdots,x_m)\, z^m \,,
\ee
see e.g.~\cite[p.~217]{comtet_01}, \cite{flajolet_01}, \cite{candelpergher_01}, \cite{candelpergher_05}. Now,
the repeated deifferentiation of $\zeta(s,v)$ yields
\be
\frac{\,\partial^k\zeta(s,v)\,}{\partial v^k}=\,(-1)^k (s)_k\,\zeta(s+k,v)\,,\qquad k\in\mathbbm{N}\,,
\ee
and since $\Delta\zeta(s,v)=\zeta(s,v+1)-\zeta(s,v)=-v^{-s}$,
formula \eqref{hg7yvf42quk} becomes
\be\label{hg7yvf42quk}
\zeta(s+k,v)\,=\,\frac{\,(-1)^{k-1} k!\,}{(s)_k} 
\!\sum_{n=0}^\infty \!\frac{S_1(n+k,k)\,}{\,(n+k)!\,}\,\Delta^{n+k-1} v^{-s}\,,
\qquad k\in\mathbbm{N}\,.
\ee
Rewriting this expression for $s$ instead of $s+k$ and using 
the fact $\,\sgn S_1(m,n)=(-1)^{m\pm n}\,$, we arrive at
\be\label{erve3g}
\zeta(s,v)\,=\,\frac{\, k!\,}{\,(s-k)_k\,} 
\!\sum_{n=0}^\infty \!\frac{\,\big|S_1(n+k,k)\big|\,}{\,(n+k)!\,}\,(-1)^{n+k-1}\Delta^{n+k-1} v^{k-s}\,,
\qquad k\in\mathbbm{N}\,,
\ee
which is identical with \eqref{94fnm4fonk3e}.
The latter formula may also be written in terms of the $n$th finite difference instead of the $(n+k-1)$th one.
Indeed, since $\Delta^{n+k-1} v^{k-s}=\Delta^{k-1}\big(\Delta^{n} v^{k-s}\big)$, then
by virtue of \eqref{0i3u40jfmnr}, we have
\be\label{erve7g}
\zeta(s,v)\,=\,\frac{\,  k!\,}{\,(s-k)_k\,} 
\!\sum_{n=0}^\infty \!\frac{\,\big|S_1(n+k,k)\big|\,}{\,(n+k)!\,}\!
\sum_{r=0}^{k-1}(-1)^r \binom {k-1}{r}(-1)^{n}\Delta^{n} (v+r)^{k-s}\,,
\ee
where $\,k\in\mathbbm{N}.$
Particular cases of the above formul\ae~may be of interest.
For instance, if $k=1$, then by virtue of \eqref{uf8peorkjg4e0jgmfg} the latter
formula immediately reduces to Hasse's series \eqref{jnd2h93ndd}. If $k=2$, then
\begin{eqnarray}
&\zeta(s,v)& =\,\frac{\, 2\,}{\,(s-1)(s-2)\,} 
\!\sum_{n=0}^\infty \!\frac{\,H_{n+1}\,}{\,n+2\,}\,(-1)^{n+1} \Delta^{n+1} v^{2-s}  \label{erve8g}\\[2mm]
&& =\,\frac{\, 2\,}{\,(s-1)(s-2)\,} \notag
\!\sum_{n=0}^\infty \!\frac{\,H_{n+1}\,}{\,n+2\,}\,(-1)^{n}
\Big\{\Delta^{n} v^{2-s} - \Delta^{n} (v+1)^{2-s}\Big\}\,.
\end{eqnarray}
For $k=3$, we get
\begin{eqnarray}
&\zeta(s,v)& =\,\frac{\, 3\,}{\,(s-1)(s-2)(s-3)\,} 
\!\sum_{n=0}^\infty \!\dfrac{\,H^2_{n+2} - H^{(2)}_{n+2}\,}{n+3}\,
\,(-1)^{n+2} \Delta^{n+2} v^{3-s}  \phantom{mmmmmm}  \label{erve9g}\\[2mm]
&& =\,\frac{\, 3\,}{\,(s-1)(s-2)(s-3)\,} 
\!\sum_{n=0}^\infty \!\dfrac{\,H^2_{n+2} - H^{(2)}_{n+2}\,}{n+3}\,(-1)^{n}
\Big\{\Delta^{n} v^{3-s} - \notag\\[2mm]
&& \qquad\qquad\qquad\qquad\qquad\qquad\qquad\qquad
- 2\Delta^{n} (v+1)^{3-s} + \Delta^{n} (v+2)^{3-s} \Big\}\,\notag
\end{eqnarray}
and so on. 
\end{proof}

The formul\ae~obtained in the above theorem give rise to many interesting expressions. For instance,
putting $v=1$ we get the following series representation for the classic Euler--Riemann $\zeta$-function
\be
\zeta(s)\, = \,\frac{k!}{\,(s-k)_k\,}   \label{94fnm4fonk3e}
\!\sum_{n=0}^\infty \frac{\,\big|S_1(n+k,k)\big|\,}{(n+k)!}\!\!\sum_{l=0}^{n+k-1}\!
(-1)^l \binom{n+k-1}{l} (l+1)^{k-s} \,,
\ee
$k\in\mathbbm{N}$.
It is also interesting that the right of this expression, following the fraction \mbox{$k!/(s-k)_k$}, 
contains zeros of the first order at $\,s=2, 3,\ldots, k\,$, while it has a fixed quantity when $s\to1$. This, perhaps,
may be useful for the study of the Riemann hypothesis, since the right part also contains the zeros 
in the strip $\,0<\Re s <1$. Furthermore, expanding both sides of \eqref{94fnm4fonk3e}
into the Laurent series about $\,s-1$, $s-2,\ldots\,$ one can obtain the series expansions for $\Psi(v)$, 
$\gamma_m(v)$, $\Psi_1(v)$, etc. For example, the Laurent series in a neighborhood of $s=1$ of \eqref{erve8g}
yields
\be\label{c3489fjhn}
\Psi(v) \,=\,-1- 2 
\!\sum_{n=0}^\infty \!\frac{\,H_{n+1}\,}{\,n+2\,}
\sum_{l=0}^{n+1} (-1)^l \binom{n+1}{l} (l+v)\ln(l+v) \,,
\ee
while that in a neighborhood of $s=2$ gives a series for the trigamma function
\be
\Psi_1(v) \,=\,- 2 
\!\sum_{n=0}^\infty \!\frac{\,H_{n+1}\,}{\,n+2\,}
\sum_{l=0}^{n+1} (-1)^l \binom{n+1}{l} \! \ln(l+v) \,,
\ee
which may also be obtained from \eqref{c3489fjhn} by a direct differentiation.

\section*{Acknowledgments} 
The author is grateful to Jacqueline Lorfanfant, Christine Disdier, Alexandra Miric, Francesca Leinardi, Alain Duys, Nico Temme, Olivier Lequeux, 
Khalid Check-Mouhammad and Laurent Roye for providing high-quality scans of several references. 
The author is also grateful to Jacques G\'elinas for his interesting remarks and comments.
Finally, the author owes a debt of gratitude to Donal Connon, who kindly revised the text
from the point of view of English.

\small

\appendix
\section*{Appendix. The integral formula for the Bernoulli polynomials of the second kind 
(Fontana--Bessel polynomials) $\boldsymbol\psi_n(x)$}
Integral representation \eqref{fmiuhgd94} may be obtained by various methods. Below we propose 
a contour integration method, which leads quite rapidly to the desired result. 

Rewrite the generating equation for $\psi_n(x)$, formula \eqref{oi09u03dj}, for $u$ instead of $z$. 
Setting $\,z=-(1+u)=(1+u)e^{+ i \pi }\,$, or equivalently $\,1+u=z\,e^{- i \pi }\,$, 
the latter formula becomes 
\be\notag
\frac{z+1}{\,\ln z-\pi i \,}\,(-z)^x\,=
\frac{(z+1) \, z^x e^{-i\pi x}}{\,\ln z-\pi i \,}\,=
\sum_{n=0}^\infty (-1)^{n-1}\psi_n(x) \, (z+1)^n\,,\qquad |z+1|<1\,.
\ee
where in the right part we replaced $\,e^{- i \pi(n-1) }\,$ by $\,(-1)^{n-1}\,$.
Evaluating now the following line integral along a  
contour $C$ (see Fig.\ \ref{kc30jfd}),
\begin{figure}[!t]    
\centering 
\includegraphics[width=0.5\textwidth]{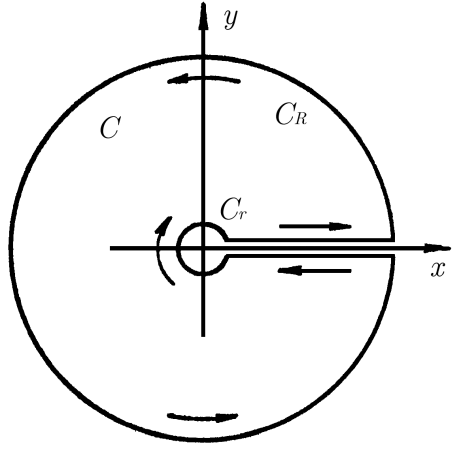} 
\caption{Integration contour $C$ ($r$ and $R$ are radii of the small and big circles respectively, where 
$r\ll1$ and $R\gg1$).} 
\label{kc30jfd} 
\end{figure} 
and then letting $R\to\infty\,$, $r\to0$, we have\footnote{Note that for the clarity we should keep $z^x e^{-i\pi x}$
rather than simply $(-z)^x$.}
\begin{eqnarray} 
&& \displaystyle  
\ointctrclockwise\limits_{C} \frac{\,z^x e^{-i\pi x}\,}{\,(1+z)^n \, (\ln z - \pi i)\,}\,dz\,=\, 
\int\limits_{r}^R \! \ldots  \,\, + \, \int\limits_{C_R} \!\ldots \,\,  
+ \int\limits_{Re^{2i\pi}}^{\;re^{2i\pi}} \!\!\!\!\ldots \,\, +\, \int\limits_{C_r} \! \ldots 
\stackrel{\substack{R\to\infty \\ r\to0}}{=}  \quad    \label{89dyndhed}\\[2mm] 
&& \displaystyle \;\notag
= \int\limits_0^\infty \!\left\{\frac{e^{-\pi i x}}{\ln z - \pi i} - \frac{e^{+\pi i x}}{\ln z + \pi i} \right\}
\cdot\frac{z^x \, dz}{(1+z)^n} \, 
=\,2i\! \int\limits_0^\infty \! \frac{\,\pi \cos\pi x - \sin\pi x \ln z\,}{\,\ln^2 z +\pi^2\,}
\cdot\frac{z^x \, dz}{\,(1+z)^n}
\end{eqnarray} 
since for $\,R\to\infty\,$
\be\notag
\left| \,\int\limits_{C_R} \! \frac{\,z^x\,e^{-i\pi x}\,}{\,(1+z)^n \, (\ln z - \pi i)\,}\,dz\, \right|  
\,=\, O\left(\!\frac{1}{\,R^{n-x-1}\ln R\,}\!\right)=o(1)\,, \qquad  n\geqslant x+ 1\,,
\ee
and for $\,r\to0\,$
\be\notag
\left| \,\int\limits_{C_r} \! \frac{\,z^x\,e^{-i\pi x}\,}{\,(1+z)^n \, (\ln z - \pi i)\,}\,dz \, \right|  
\,=\, O\left(\!\frac{\,r^{x+1}\,}{\,\ln r\,}\!\right)=o(1)\,, \qquad x\geqslant -1\,,   
\ee
where $n\in\mathbb{N}$ and $x$ is assumed to be real.
But the above contour integral may also be evaluated by means of the Cauchy residue theorem.
Since the integrand has only two singularities, a branch point at $z=0$ which we excluded
and a pole of the $(n+1)$th order at $z=e^{i\pi}=-1$,  the Cauchy residue theorem gives us
\begin{eqnarray} 
&& \displaystyle  
\ointctrclockwise\limits_{C} \frac{\,z^x\,e^{-i\pi x} \,}{\,(1+z)^n \, (\ln z - \pi i)\,}\, dz\,=\, 
2\pi i \! \res\limits_{z=-1}\! \frac{z^x\,e^{-i\pi x}}{\,(1+z)^n \, (\ln z - \pi i)\,} \,=\label{98yfdhhd}\\[2mm] 
&& \displaystyle \qquad \qquad\qquad \notag
\,=\,\frac{2\pi i}{n!}\cdot  
\left.\frac{\partial^n}{\partial z^n}\frac{z^x\,e^{-i\pi x}\, (z+1)}{\,\ln z - \pi i\,}\right|_{z=-1} 
\!\!\! = \,2\pi i \,(-1)^{n-1}\psi_n(x)  \,.
\end{eqnarray} 
Equating \eqref{89dyndhed} with \eqref{98yfdhhd}, therefore, yields
\be
\psi_n(x)\,=\,\frac{(-1)^{n+1}}{\pi} 
\!\int\limits_0^\infty \! \frac{\,\pi \cos\pi x - \sin\pi x \ln z\,}{\,(1+z)^n} 
\cdot\frac{z^x \, dz}{\,\ln^2 z +\pi^2\,}\,,
\ee
where $\,n\in\mathbb{N}\,$ and $\,-1\leqslant x\leqslant n-1\,$. 
This formula may also be written is a variety of other forms, for example,
\begin{eqnarray} 
& \displaystyle 
\psi_n(x)&=\,\frac{(-1)^{n+1}}{\pi} 
\!\int\limits_0^\infty \! \frac{\,\pi \cos\pi x + \sin\pi x \ln z\,}{\,(1+z)^n} \cdot\frac{z^{n-x-2} \, dz}{\,\ln^2 z +\pi^2\,} \\[2mm] 
&& \displaystyle 
=\,\frac{(-1)^{n+1}}{\pi} 
\!\int\limits_{-\infty}^{+\infty}\! \! \frac{\,\pi \cos\pi x - v\sin\pi x \,}{\,(1+e^v)^n} 
\cdot\frac{e^{v(x+1)} }{\,v^2 +\pi^2\,}\, dv \\[2mm] 
&& \displaystyle 
=\,\frac{(-1)^{n+1}}{\pi} 
\!\int\limits_{-\infty}^{+\infty}\! \! \frac{\,\pi \cos\pi x + v\sin\pi x \,}{\,(1+e^v)^n} 
\cdot\frac{e^{v(n-x-1)} }{\,v^2 +\pi^2\,}\, dv
\end{eqnarray} 
with the same $n$ and $x$. Note that at $x=0$ we retrieve 
Schr\"oder's formul\ae~for $G_n$ \cite{iaroslav_11}.
\qed

\end{document}